\pdfoutput=1
\documentclass[11pt,twoside,reqno]{amsart}
\usepackage{amsfonts,amssymb,amsmath,amsthm,amsxtra,amscd}
\usepackage{mathtools}
\usepackage{bm}
\usepackage{enumitem}
\usepackage{dsfont}
\usepackage{stmaryrd}

\usepackage[top=1.5in, bottom=1.5in, left=1.23in, right=1.23in]{geometry}

\newcommand{\F}{\mathcal{F}}
\newcommand{\RR}{\mathbb R}
\newcommand{\R}{\RR}
\newcommand{\TT}{\mathbb T}
\newcommand{\NN}{{\mathbb N}}
\newcommand{\N}{\NN}
\newcommand{\Z}{\mathbb Z}
\newcommand{\QQ}{{\mathbb Q}}
\newcommand{\Q}{\QQ}
\newcommand{\C}{{\mathbb C}}

\newcommand{\PP}{\mathbb{P}}
\newcommand{\G}{\mathbb{G}}

\newcommand{\ind}[1]{\mathds{1}_{{#1}}}

\numberwithin{equation}{section}

\newtheorem{theorem}[equation]{Theorem}
\newtheorem{proposition}[equation]{Proposition}

\newtheorem{lemma}[equation]{Lemma}
\theoremstyle{definition}
\newtheorem{remark}[equation]{Remark}
\newtheorem{example}[equation]{Example}
\newtheorem{definition}[equation]{Definition}

\usepackage{hyperref} 
\hypersetup{
    breaklinks=true,
	colorlinks=true,       
	linkcolor=blue,          
	citecolor=magenta,        
	filecolor=magenta,      
	urlcolor=black           
}

\title{Remarks on the Ionescu--Wainger multiplier theorem}

\author{Dariusz Kosz}
\address{Dariusz Kosz (\textnormal{dariusz.kosz@pwr.edu.pl}) 
\newline Faculty of Pure and Applied Mathematics, Wroc{\l}aw University of Science and Technology, Wybrze{\.z}e Stanis{\l}awa Wyspia{\'n}skiego 27, 50-370 Wroc{\l}aw, Poland}

\author{Mariusz Mirek}
\address{Mariusz Mirek (\textnormal{mariusz.mirek@rutgers.edu}) 
\newline Department of Mathematics, Rutgers University, Piscataway, NJ 08854-8019, USA
\& Instytut Matematyczny, Uniwersytet Wroc{\l}awski,
Plac Grunwaldzki 2/4, 50-384 Wro\-c{\l}aw, Poland}

\author{Wojciech S{\l}omian}
\address{Wojciech S{\l}omian (\textnormal{wojciech.slomian@uga.edu})
\newline Department of Mathematics, University of Georgia, Athens, GA 30602, USA \& Instytut \linebreak Matematyczny, Uniwersytet Wroc{\l}awski, Plac Grunwaldzki 2/4, 50-384 Wro\-c{\l}aw, Poland}

\author{Jianghao Zhang}
\address{Jianghao Zhang
(\textnormal{J.Zhang-395@sms.ed.ac.uk})
\newline School of Mathematics, University of Edinburgh, Edinburgh, EH9 3FD, UK}

\begin{document}

\thanks{Dariusz Kosz was partially supported by the National Science Centre of Poland grant SONATA BIS 2022/46/E/ST1/00036. Mariusz Mirek was partially supported  by the NSF CAREER grant DMS-2236493. Wojciech S\l omian was partially supported by AMS--Simons Travel Grant No. 00008780 and by the Foundation for Polish Science scholarship START~068.2023. For the purpose of Open Access the authors have applied a~CC BY public copyright license to any Author Accepted Manuscript version arising from this submission.}

\begin{abstract}
In this paper, we extend the recent Ionescu--Wainger multiplier theorem for the set of canonical fractions by Kosz, Mirek, Peluse, Wan, and Wright in several directions. First, we prove its weighted version, which allows us to combine a multifrequency setting with appropriate arithmetic weights. Second, we establish useful seminorm variants of the theorem. Third, we improve the norm upper bounds and, surprisingly, show that these bounds cannot be uniform in the size of the family of canonical fractions. Finally, we demonstrate how these refinements (especially handling arithmetic weights) can be applied by giving a short proof of Bourgain's pointwise ergodic theorem for polynomial iterates.
\end{abstract}

\maketitle
	
\section{Introduction}
	
\subsection{A brief history}
A common phenomenon in the field of discrete analogs in harmonic analysis is that, for many important convolution operators with arithmetic features, their multipliers $\mathfrak m(\xi)$ can be efficiently approximated by certain multifrequency expressions, which locally take the following form
\begin{align}
\label{eq:38}
\mathfrak m_{\rm dis}(a/q) \cdot \mathfrak m_{\rm cont}(\xi - a/q)
\end{align}
if $\xi\in\mathbb T$ lies close to a rational point $a/q$ with a small denominator, and by $0$ otherwise. Here $\mathfrak m_{\rm dis}$ denotes the arithmetic component, typically evaluated at $a/q$ as above, while $\mathfrak m_{\rm cont}$ is the continuous component that reflects the behavior of the continuous prototype of the operator. One of the best-known examples is a discrete  averaging Radon operator given by
\begin{align}
\label{eq:36}
A_{N;\mathbb Z}^{P}f(x)\coloneqq \frac{1}{N}\sum_{n\in [N]}f(x-P(n)),
\qquad x \in \mathbb Z,
\end{align}
for $N\in\mathbb Z_+$ and a polynomial  $P\in\Z[{\rm n}]$ with integer coefficients, see Section~\ref{sec:not} for notation.

Averages \eqref{eq:36} were extensively studied by Bourgain \cite{B1, B2, B3} in the context of his celebrated pointwise ergodic theorem with polynomial iterates, see Theorem~\ref{bourgain} in Section~\ref{sec:B}. In \cite{B1, B2, B3}, Bourgain clearly emphasized that pointwise convergence problems in ergodic theory live in a very close symbiosis with quantitative estimates in the integer shift system, such as maximal or variational inequalities among others, see Sections~\ref{sec:var}~and~\ref{sec:B}. The latter inequalities ensure pointwise convergence in any measure-preserving system via the Calder{\'o}n transference principle \cite{Cald}, which nowadays is the key tool linking the world of quantitative estimates in ergodic theory (see Theorem~\ref{bourgain1}) with the world of discrete analogs in harmonic analysis, see the discussion in Section~\ref{sec:B} below formula \eqref{eq:13}.

An important fact is that the operator $A_{N;\mathbb Z}^{P}$ is given by convolution with the kernel
\begin{align*}
K_N^P\coloneqq \frac{1}{N}\sum_{n\in[N]}\ind{\{P(n)\}}.
\end{align*}
Using the Fourier transform, see Section~\ref{sec:not} for notation, we have $\mathcal F_{\mathbb Z}A_{N; \mathbb Z}^{P} f(\xi)=m_{N}(\xi)\mathcal F_{\mathbb Z}f(\xi)$ for $\xi\in\mathbb T$, where the multiplier $m_N$ is a normalized  exponential sum of the form
\begin{align}
\label{eq:37}
m_{N}(\xi) \coloneqq \mathcal F_{\mathbb Z}K_N^P(\xi)= \frac{1}{N}\sum_{n\in[N]}e(-\xi\cdot P(n)).
\end{align}

Bourgain \cite{B1, B2, B3} had an ingenious insight to use the circle method of Hardy--Littlewood--Ramanujan to analyze the multipliers from \eqref{eq:37} and to make the approximation \eqref{eq:38} rigorous. Namely, for $N\ge 1$, we define a continuous counterpart of $m_N$ by
\begin{align*}
\mathfrak{m}_N(\xi) \coloneqq \int_{0}^1e(-\xi\cdot P(Nt)) \,{\rm d}t, \qquad \xi\in\mathbb R.
\end{align*}
Also, for every  $a/q\in\mathbb Q$, we  define the complete exponential sum by  $G(a/q) \coloneqq m_q(a/q)$. As a  consequence of the mean-value theorem, for every $\xi\in\mathbb T$ and $(a,q) \in\mathbb Z \times \mathbb Z_+$ satisfying ${\rm gcd}(a, q)=1$, if $|\xi-a/q|\le N^{-\deg P+\delta}$ and $q\le N^\delta$, then one has
\begin{align}
\label{eq:40}
m_N(\xi)=G(a/q)\mathfrak m_N(\xi-a/q)+E_N(\xi),
\end{align}
where $\|E_N\|_{L^\infty(\mathbb T)}\le N^{-1/2}$ provided that $\delta\in(0, 1)$ is sufficiently small.

Identity \eqref{eq:40} is the key mechanism for constructing approximations on the major arcs in the classical circle method, thereby justifying \eqref{eq:38} whenever $\xi\in\mathbb T$ is in a small vicinity of some $a/q\in\mathbb Q$ such that $q \in \Z_+$ is small, i.e. $|\xi-a/q|\le N^{-\deg P+\delta}$ and $q\le N^\delta$. This was first observed by Bourgain in the context of discrete operators of Radon type \eqref{eq:36} and their connections to pointwise ergodic theorems.

In order to gain a better perspective on the identity \eqref{eq:40} and its role in the circle method, we refer to a vast literature in analytic number theory on the classical circle method; the books by Iwaniec and Kowalski \cite{IK}, Nathanson \cite{Nat}, Vaughan \cite{Vau}, Vinogradov \cite{Vin}, and the notes of Wooley \cite{Wooleynotes} are excellent references on this exciting subject.

Identity \eqref{eq:40}, together with the classical Weyl inequality \cite{Weyl} for the exponential sums from \eqref{eq:37}, led Bourgain \cite{B1, B2, B3} to construct an approximation for \eqref{eq:37}, namely
\[
m_N(\xi)=\mathfrak a_N(\xi)+ E_N^*(\xi),
\]
where the error term $E_N^*$ satisfies $\|E_N^*\|_{L^\infty(\mathbb T)}\le N^{-\gamma}$ for some small $\gamma\in(0, 1)$ provided that $\delta\in(0, 1)$ is sufficiently small, and $\mathfrak a_N$ is a multifrequency multiplier given by the formula
\begin{align}
\label{eq:41}
\mathfrak a_N(\xi)\coloneqq\sum_{a/q\in \mathcal R^1_{\le N^\delta}}G(a/q)\mathfrak m_N(\xi-a/q)\eta\big(N^{\deg P-\delta}(\xi-a/q)\big),
\end{align}
where $\eta \colon \R\to[0, 1]$ is a smooth   bump function so that $\ind{[-1/4, 1/4]}\le \eta\le \ind{(-1/2, 1/2)}$, and
\begin{align*}
\mathcal R_{\le N}^1 \coloneqq \big\{ a/q \in \QQ / \Z : q \in \{1,\dots,N\}
\text{ and } {\rm gcd}(a, q)=1 \big \}
\end{align*}
is the set of \textit{canonical fractions}, where $a/q$ is identified with its equivalence class $a/q + \Z$.

The key outcome of this approximation is that the multiplier \(\mathfrak a_N\) is supported on the union of small disjoint neighborhoods of canonical fractions \(a/q \in \mathcal R_{\le N^\delta}^1\) provided that $\delta\in(0, 1)$ is sufficiently small. Thus, it has precisely the form of the expression in \eqref{eq:38}. By the disjointness of the summands in \eqref{eq:41} and Plancherel's theorem, the operator $f \mapsto \mathcal F_{\mathbb{Z}}^{-1} (\mathfrak a_N \mathcal F_{\mathbb{Z}} f)$ is a contraction on \(\ell^2(\mathbb{Z})\) even though the cardinality of \(\mathcal R_{\le N^\delta}^1\) grows to infinity with \(N\). In fact, the latter is the main obstruction to extending the theory beyond \(\ell^2(\mathbb{Z})\).

After that reduction, the \(\ell^p(\mathbb{Z})\) bounds of maximal functions associated with \eqref{eq:36}, or more generally other seminorms arising in the study of pointwise convergence problems, have been reduced to understanding the corresponding objects involving the operators \(f\mapsto \mathcal F_{\mathbb{Z}}^{-1}(\mathfrak a_N \mathcal F_{\mathbb{Z}} f)\) instead of $A_{N; \mathbb Z}^{P} f$. However, the multifrequency nature of the multiplier \(\mathfrak a_N\) (i.e. \(\mathfrak a_N\) is a sum of  pieces with disjoint supports centered at frequencies $a/q\in\mathcal R_{\le N^\delta}^1$) significantly complicates matters, even though each object, such as
\[
G(a/q)\quad \text{ or } \quad \mathfrak m_N(\xi-a/q)\eta\big(N^{\deg P -\delta}(\xi-a/q)\big),
\]
is well understood from the perspective of number theory and harmonic analysis for each individual $a/q\in\mathcal R_{\le N^\delta}^1$. These complications are immediately apparent at the level of the $\ell^p(\Z)$ bounds for the operator \(f\mapsto \mathcal F_{\mathbb{Z}}^{-1}(\mathfrak a_N \mathcal F_{\mathbb{Z}} f)\) when $p\neq 2$, even for each $N\in\Z_+$ separately, since Plancherel's theorem is not available on $\ell^p(\mathbb{Z})$ spaces with $p \neq 2$.

Bourgain instead developed a deep machinery in \cite{B1,B2,B3} to study discrete operators with multifrequency multipliers \eqref{eq:41}. However, this approach required separating the arithmetic components $G(a/q)$ from the continuous part $\mathfrak m_N(\xi-a/q)\eta\big(N^{\deg P-\delta}(\xi-a/q)\big)$ by employing intricate methods from harmonic analysis and number theory, among others.

As far as we know, all available methods in the field of discrete analogs in harmonic analysis necessitate separate analysis of the arithmetic part and the continuous part. Our aim in this paper is to demonstrate that the discrete operators corresponding to multipliers like in \eqref{eq:41} can be handled in a fairly unified way, without parceling out the arithmetic components from the continuous part, which will significantly simplify proof strategies.

\subsection{Statement of the main results} More specifically, our aim is to prove a variant of the Ionescu--Wainger multiplier theorem \cite{IW} that incorporates arithmetic weights. Loosely speaking, this variant will allow us to work with discrete operators corresponding to multifrequency multipliers as in \eqref{eq:41} and to analyze the arithmetic and continuous parts simultaneously. For this purpose, we introduce the new concept of Ionescu--Wainger weights. These weights will have certain algebraic properties, which ensure that the Ionescu--Wainger theory remains adaptable in the corresponding weighted setting. Here and throughout, we will use the basic notation introduced in Section~\ref{sec:not}.
    
\begin{definition}[Ionescu--Wainger weights]\label{def:IW-weight}
A nonzero function  $G \colon \QQ^d\to\mathbb{C}$ is an \emph{Ionescu--Wainger weight} if it satisfies the following two conditions.

\begin{enumerate} [label=(\Alph*), itemsep=2pt]
    \item \label{cond:A} Factorization modulo $1$: if $a/q \equiv a_1/q_1 + a_2/q_2 \pmod{1}$ with ${\rm gcd}(q_1,q_2)=1$, then
\[
G(a/q) = G(a_1/q_1) \cdot G(a_2/q_2).
\]
In particular, $G$ is $1$-periodic and $G(0)=1$.
    \item \label{cond:B} Uniform Fourier transform boundedness: there exists a constant $U_G \in \R_+$ such that
\[
\sup_{q\in\Z_+}\| \mathcal F^{-1}_{(\Z / q \Z)^d} (G) \|_{L^1({(\Z / q \Z)^d})} \leq U_G.
\]
\end{enumerate}
For an Ionescu--Wainger weight $G$ and a finite set $\Sigma \subset \QQ^d$, we let
\begin{align}
\label{eq:42}
{\bf C}_{G}(\Sigma) \coloneqq U_G \| G \|_{\ell^\infty(\Sigma)}.
\end{align}
\end{definition}

\begin{example}
\label{ex:1}
The following functions are typical Ionescu--Wainger weights:

\begin{enumerate} [label=(\alph*), itemsep=2pt]
\item \label{itema} The constant function $G \equiv 1$. Note that in this case we have $U_{G} = 1$.
\item \label{itemb} The exponential function $a/q \mapsto e(a/q)$.
\item \label{itemc} Gauss-type sums $a/q \mapsto q^{-k} \sum_{n \in [q]^k} e(P(n) \cdot a/q)$ for any polynomial mapping $P=(P_1,\ldots, P_d) \colon \Z^k\to \Z^d$, where the components $P_1,\ldots, P_d$ are $k$-variate polynomials with integer coefficients.
\end{enumerate}
It is not difficult to see that these functions satisfy condition~\ref{cond:A}. In Section~\ref{sec:not}, we collect further remarks on the Ionescu--Wainger weights. In particular, we prove that each of the examples above satisfies also condition~\ref{cond:B}, see inequality \eqref{eq:Gauss-IW}.
\end{example}

For every Ionescu--Wainger weight $G$, since $G=\mathcal F_{(\Z / q \Z)^d}\mathcal F^{-1}_{(\Z / q \Z)^d} (G)$,  we have
\[
\| G \|_{L^\infty(q^{-1}\Z^d)} \leq \| \mathcal F^{-1}_{(\Z / q \Z)^d} (G) \|_{L^1({(\Z / q \Z)^d})} \leq U_G.
\]
In particular, $1 = G(0) \leq \sup_{\beta \in \QQ^d} |G(\beta)| \leq U_G$ so that ${\bf C}_{G}(\Sigma)\le U_G^2$ by \eqref{eq:42}. 

Using the notation from \eqref{IWeq:372} and \eqref{eq:1}, we are now ready to state the first main result of this article, which concerns a multiplier as in \eqref{eq:38} with an arithmetic component $G$ and a continuous component $\mathfrak m$, localized to the set of canonical fractions $\mathcal R^d_{\le N}$. 

\begin{theorem} \label{thm:IW:upper} Fix $r \in \Z_+$. Then there exists a constant ${\bf C}_{\rm IW}(r) \in \R_+$ such that, for each $d \in \Z_+$ and each $N \in \N_{\geq 10}$, the following is true. Assume that
\begin{align} \label{eq:support}
0 < \varepsilon \leq (4r N^{2r})^{-1}
\end{align}
and let $\mathfrak m \colon \RR^{d} \to L(H_1,H_2)$ be a measurable function supported on $\varepsilon \mathbf{Q}^d$, whose values are bounded linear operators between two separable Hilbert spaces $H_{1}$ and $H_{2}$. Let $\mathbf{A}_{2r} \in [0,\infty]$ be such that, for all $f \in L^2(\RR^d; H_1) \cap L^{2r}(\RR^d; H_1)$, we have
\begin{equation*}
\|T_{\RR^d}[\mathfrak m] f\|_{L^{2r}(\RR^d; H_2)} \leq \mathbf{A}_{2r} \|f\|_{L^{2r}(\RR^d; H_1)}.\end{equation*}
Suppose that $G \colon \TT^d \to \mathbb{C}$ is an Ionescu--Wainger weight from Definition~\ref{def:IW-weight}. Then
\begin{equation}\label{IW UPPER}
\big\| T_{\Z^d} \big[ \llbracket G \mid \mathfrak m\rrbracket_{\mathcal R^d_{\le N}}\big] f \big\|_{\ell^{p}(\Z^d; H_2)} \lesssim_{d,r}
N^{{\bf C}_{\rm IW}(r) / \log \log N}
\mathbf{C}_{G}(\mathcal R^d_{\leq N})
\mathbf{A}_{2r} \|f\|_{\ell^{p}(\Z^d; H_1)}
\end{equation}
for all $f \in \ell^2(\Z^d; H_1) \cap \ell^{p}(\Z^d; H_1)$ with $(2r)/(2r-1) \leq p \leq 2r$, and $\mathbf{C}_{G}(\mathcal R^d_{\leq N})$ as in \eqref{eq:42}.
\end{theorem}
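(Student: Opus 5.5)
We follow the Ionescu--Wainger scheme as streamlined in Kosz--Mirek--Peluse--Wan--Wright: decompose $\mathcal R^d_{\le N}$ into a controlled number of structured pieces, and prove a square-function/Rademacher--Menshov-type estimate on each piece. The weight $G$ is carried along using condition \ref{cond:A} on each coprime factor and condition \ref{cond:B} on each modulus.

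\textbf{Step 1: combinatorial decomposition.} Set $D=\lceil\log\log N\rceil$ (roughly). Following Ionescu--Wainger, we split the denominators $q\le N$ into $O(N^{C/\log\log N})$ families. Each family has the following form: $q=q_1\cdots q_k\, Q'$, where $k\le C r$, the $q_j$ are pairwise coprime powers of primes drawn from a finite set $P_0$ of small primes, and $Q'$ ranges over a set of products of at most $D$ large primes. The set of fractions in a family is then a sum set $\Sigma_1+\dots+\Sigma_k$ of sets of fractions with pairwise coprime denominators. The separation condition \eqref{eq:support}, $\varepsilon\le(4rN^{2r})^{-1}$, guarantees two things: the $\varepsilon\mathbf Q^d$-neighbourhoods of the points are disjoint, and so are those of sums of up to $2r$ differences. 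This disjointness is what allows the $2r$-th power expansion below.

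\textbf{Step 2: bound on a single family via weights.} For a sum set $\Sigma=\Sigma_1+\cdots+\Sigma_k$, condition \ref{cond:A} gives $G(\theta_1+\dots+\theta_k)=\prod G(\theta_j)$. Hence the multiplier $\llbracket G\mid\mathfrak m\rrbracket_\Sigma$ factors as a composition of periodic multipliers $\sum_{\theta\in\Sigma_j}G(\theta)\,\ind{\theta+\text{small}}$ followed by one continuous piece. The operator
\[
f\mapsto \sum_{\theta\in q^{-1}\Z^d}G(\theta)\,\mathcal F^{-1}(\ind{\cdot}\cdots)
\]
is a convolution, modulo $q$, with $\mathcal F^{-1}_{(\Z/q\Z)^d}(G)$ tensored with a smooth cutoff. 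Condition \ref{cond:B} therefore bounds it on every $\ell^p$ by $U_G$, and this is where ${\bf C}_G$ enters. Only one factor of $U_G$ is spent globally. The remaining weight values are bounded pointwise by $\|G\|_{\ell^\infty(\mathcal R^d_{\le N})}$. The continuous factor is handled by the sampling principle of Magyar--Stein--Wainger (in the vector-valued form of the earlier paper), which transfers the $L^{2r}(\R^d)$ bound $\mathbf A_{2r}$ to $\ell^{2r}(\Z^d)$ for multipliers periodized over a set of fractions with common denominator. Here again the support condition is essential.

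\textbf{Step 3: the large-prime part (main obstacle).} Denominators $Q'$ built from at most $D$ large primes have no common modulus of size $N^{o(1)}$. We treat them by the Ionescu--Wainger/KMPWW $2r$-th moment argument. We write $\|\sum_{\theta}T_\theta f\|_{2r}^{2r}$, expand it, and use the fact that $2r$-fold sums of the relevant fractions are distinct, so that the expansion becomes a square function. This square function is then controlled by a Rademacher--Menshov/Khintchine argument with loss $N^{C(r)/\log\log N}$. Carrying $G$ through this expansion is the delicate point. My plan is to use factorization \ref{cond:A} to pull $G$ onto the periodic part of each term before expanding. Then only $\sup|G|$ over the relevant fractions appears in the square-function step, together with one application of \ref{cond:B} in Step 2, which yields ${\bf C}_G(\mathcal R^d_{\le N})=U_G\|G\|_{\ell^\infty}$. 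The hard point is making sure that $U_G$ is not raised to the power $k$ or $2r$. To achieve this, I would apply \ref{cond:B} only once, to the full modulus $q_1\cdots q_k$ at the end, rather than once per factor.

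\textbf{Step 4: conclusion.} Summing over the $N^{C/\log\log N}$ families gives \eqref{IW UPPER} at $p=2r$. The case $p=(2r)'$ follows by duality: the adjoint multiplier has the same form, with weight $\bar G$ and $\mathfrak m^*$, which is bounded by $\mathbf A_{2r}$ on $L^{(2r)'}$. Interpolation with the trivial $\ell^2$ bound then covers the intermediate range $(2r)/(2r-1)\le p\le 2r$.
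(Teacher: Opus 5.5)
There is a genuine gap in Steps~1--2, and it sits exactly where this theorem differs from the original Ionescu--Wainger one. As stated, your families do not cover $\mathcal R^d_{\le N}$: a primorial $q\le N$ has $\asymp\log N/\log\log N$ distinct prime factors, far more than $Cr+\lceil\log\log N\rceil$.

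The Ionescu--Wainger repair would let the small-prime part be an arbitrary divisor of $\prod_{p\le T}p^{\lfloor\log_p N\rfloor}$ and handle it through one common modulus with the sampling principle. But keeping at most about $\log\log N$ large primes forces $T$ to be roughly $N^{1/\log\log N}$. Then that common modulus has size at least $\exp(N^{c/\log\log N})$, which is superpolynomial. Theorem~\ref{prop:msw} on $\mathcal Q(b)$ needs $\mathfrak m$ supported in $b^{-1}\mathbf Q^d$, i.e.\ $b\le\varepsilon^{-1}$. Under \eqref{eq:support} only moduli $b\le 4rN^{2r}$ are guaranteed, which is precisely why Ionescu and Wainger needed $\varepsilon<e^{-N^{\rho}}$.

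So no prime can be absorbed into a common modulus. \emph{Every} prime factor must be handled by orthogonality, at cost $O_r(1)$ per factor, with up to $K_N\simeq\log N/\log\log N$ factors. This is the missing idea, and it needs the specific machinery of the paper:
\begin{itemize}
\item split by the number $K\le K_N$ of prime divisors of $q$;
\item replace $q$ by its $N$-lifted composite $Q_q$, and cover by $2^{O(\log N/\log\log N)}$ product sets $S_1\cdots S_K$ of disjoint sets of lifted prime powers (property $\mathcal O_N$, Lemma~\ref{L2});
\item pass to $\mathbb P$-irreducible fractions so that $\mathcal I(\Lambda\Lambda')=\mathcal I(\Lambda)\oplus\mathcal I(\Lambda')$ (Lemma~\ref{lem:fact});
\item run $K$ rounds of Proposition~\ref{UP} for denominators and $K$ rounds of Lemma~\ref{L3} plus Proposition~\ref{UP} for numerators, for a total loss $C_r^{K}\le N^{C/\log\log N}$.
\end{itemize}
Only at the end does the sampling principle enter, on $\mathcal Q(b_q)$ for a single admissible divisor $b_q\le N$, via the Marcinkiewicz--Zygmund inequality.

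Three further points in Steps~2--3 would also need repair.
\begin{itemize}
\item The $2r$-th power expansion does not collapse because ``$2r$-fold sums are distinct'' (they are not). What holds is that $2r$-tuples with the uniqueness property give non-integral frequency sums, and this is what Proposition~\ref{UP} exploits.
\item The resulting square function is not controlled by a Rademacher--Menshov/Khintchine argument; that belongs to Theorem~\ref{thm:IWvar}. The paper instead interpolates an $\ell^2$ bound (Plancherel and disjoint supports) with an $\ell^\infty$ bound proved by duality for a bump multiplier.
\item Condition~\ref{cond:B} controls sums of $G$ over all of $\mathcal Q(b)$, while the multiplier sees only reduced fractions. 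One needs M\"obius inversion from $\mathcal R(b_q)$ to $\mathcal Q(b)$, $b\mid b_q$. This costs the number of divisors of $b_q$ in $[N]$, which Lemma~\ref{L4} bounds by $N^{C/\log\log N}$.
\end{itemize}
Your instinct about the weight is otherwise right. In the $\ell^\infty$ estimate, condition~\ref{cond:A} splits $G(u+u')=G(u)G(u')$ for coprime denominators, and condition~\ref{cond:B} is used once, on the modulus $b$. The factor $G(u')$ is then bounded by $\|G\|_{\ell^\infty(\mathcal R^d_{\le N})}$ through Plancherel, which yields exactly $\mathbf{C}_{G}(\mathcal R^d_{\le N})$ with no higher power of $U_G$.
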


Theorem~\ref{thm:IW:upper} originates in the groundbreaking paper of Ionescu and Wainger \cite{IW}. The authors in \cite{IW} developed a deep multifrequency multiplier theorem in the spirit of Theorem~\ref{thm:IW:upper} that allowed them (which was out of reach prior to \cite{IW}) to establish $\ell^p(\Z^d)$ boundedness for all $p\in(1, \infty)$ of the discrete singular Radon transforms
\begin{align}
\label{eq:45}
R_N^Pf(x) \coloneqq \sum_{m\in \{-N, \dots, N\}^k\setminus\{0\}^k }f(x-P(m))K(m), \qquad x\in\Z^d,
\end{align}
where $N\in\Z_+$ and $P \colon \Z^k\to\Z^d$ is a polynomial mapping with integer coefficients, see item \ref{itemc} in Example~\ref{ex:1}, and $K$ is a Calder{\'o}n--Zygmund kernel, see \cite[Theorem~1.1]{IW}.   

Ionescu and Wainger \cite{IW}, assuming that for any arbitrary $\rho\in(0, 1)$ the condition
\begin{align}
\label{eq:44}
0<\varepsilon < e^{-N^{\rho}}
\end{align}
is satisfied in place of condition \eqref{eq:support}, originally proved Theorem~\ref{thm:IW:upper} in the scalar setting, i.e. for $H_1=H_2=\C$, with $G\equiv1$ and a bound $(\log N)^{2/\rho+1}$ in place of $N^{{\bf C}_{\rm IW}(r) / \log \log N} $ in \eqref{IW UPPER}, and for the so-called Ionescu--Wainger fractions in place of the canonical fractions $\mathcal R^d_{\leq N}$ defined in \eqref{IWeq:372}. The Ionescu--Wainger fractions are defined as follows
\begin{align}
\label{eq:43}
\widetilde{\mathcal R}^d_{\leq N}\coloneqq\big\{ a/q \in(\QQ/ \Z)^d: q \in P_N \text{
and } {\rm gcd} (a, q)=1\big\},
\end{align}
where $P_N$ is a certain large family of positive integers having delicate factorization properties that dictated condition \eqref{eq:44}, which is much stronger than condition \eqref{eq:support}.

Bourgain's results \cite{B1, B2, B3} and the Ionescu--Wainger theorem \cite{IW} share the same starting point --- the circle method --- but otherwise have completely different proofs.  Although \cite{IW} was a huge breakthrough, the full strength of the multifrequency multiplier theorem was not yet fully understood. The situation changed when the second author \cite{M1} highlighted that the Ionescu--Wainger multifrequency multiplier theorem can be interpreted as a multifrequency Littlewood--Paley theory that captures all the Diophantine features arising from the Hardy--Littlewood circle method. This gave a positive answer to Stein's question about the existence of a discrete Littlewood--Paley theory and was used to obtain a new proof of the $\ell^p(\Z^d)$ boundedness for all $p\in(1, \infty)$ of the discrete singular Radon transforms \eqref{eq:45} from \cite{IW}. The ideas from \cite{M1} led to many papers on the $r$-variational \cite{MST1, MST2}, jump \cite{MSZ3}, or uniform oscillation \cite{MSS} inequalities for discrete operators of Radon type.

Currently, the Ionescu--Wainger multifrequency multiplier theorem \cite{IW} is a key tool in the field of discrete analogs in harmonic analysis, and it has undergone a period of considerable changes and developments. As mentioned above, it was originally proved for the so-called Ionescu--Wainger fractions \eqref{eq:43}. In \cite{M1}, the second author, in addition to observing that the Ionescu--Wainger multiplier theorem can be interpreted as a multifrequency Littlewood--Paley theory, also proved that the norm is at most of order $\log N$. In \cite{MSZ3}, the authors extended the Ionescu--Wainger multiplier theorem to the setting of arbitrary separable Hilbert spaces $H_1$ and $H_2$. This was valuable for applications to various square functions. In an important paper \cite{Pierce}, Pierce undertook a successful effort to abstract the key orthogonality mechanisms behind the proof of the Ionescu--Wainger multiplier theorem in terms of the superorthogonality phenomena in harmonic analysis. This resulted in a very clean proof of the Ionescu--Wainger multiplier theorem. Essentially at the same time, Tao \cite{TaoIW} provided an alternative proof of the Ionescu--Wainger multiplier theorem with bounds independent of $N$. We emphasize that all these improvements to the Ionescu--Wainger multiplier theorem \cite{M1, MSZ3, Pierce, TaoIW} were achieved under the condition \eqref{eq:44} for the set of Ionescu--Wainger fractions $\widetilde{\mathcal R}^d_{\leq N}$ from \eqref{eq:43}, as in the original Ionescu--Wainger approach \cite{IW}.

Recently, the first two authors in collaboration with Peluse, Wan, and Wright \cite{KMPWW}, answered the question posed by Ionescu and Wainger in \cite[Remark~3, p.~361]{IW} in the affirmative, proving a variant of the Ionescu--Wainger multiplier theorem, as stated in Theorem~\ref{thm:IW:upper} with $G\equiv 1$ and the norm $N^{{\bf C}_{\rm IW}(r) \log \log \log N / \log \log N}$ in place of $N^{{\bf C}_{\rm IW}(r) / \log \log N}$ in \eqref{IW UPPER}. This new Ionescu--Wainger multiplier theorem from \cite{KMPWW} was essential in establishing a multilinear variant of Weyl's inequality, which was key to Bergelson's conjecture \cite{KMPWW} for multilinear ergodic averages with commuting transformations for iterates along polynomials with distinct degrees. Theorem~\ref{thm:IW:upper} can also be used to interpret the classical Weyl inequality for exponential sums \cite{Weyl} in $\ell^p(\mathbb{Z})$ spaces for $p \in (1, \infty)$. The details can be found in \cite{M-ICM}. 

Now, having this brief outline of recent advances in the Ionescu--Wainger multiplier theorem, we can highlight the novelties in our results.

\begin{enumerate}[label*={(\arabic*)}, itemsep=2pt]
\item As far as we know, this is the first instance in the literature where the formulation of the Ionescu--Wainger multiplier theorem incorporates the arithmetic weights \(G\) satisfying conditions \ref{cond:A} and \ref{cond:B}. Surprisingly, the proof of the Ionescu--Wainger multiplier theorem with \(G \equiv 1\) from \cite{KMPWW} is easily adaptable to our weighted setting without many differences. In Section~\ref{sec:logloglog}, for the convenience of the reader, we present all the details, even those that were omitted in \cite{KMPWW}. Thus, the price we pay for having a weighted version of the Ionescu--Wainger multiplier theorem, see Theorem~\ref{thm:IW:upper} above, is not high, but the benefits from having such a result are substantial. Now the multipliers $\llbracket G \mid \mathfrak m\rrbracket_{\mathcal R^d_{\le N}}$ defined in \eqref{eq:1} can be handled in a fairly unified way without parceling out the arithmetic weights \(G\) from the continuous part \(\mathfrak m\).

\item In Section~\ref{sec:var}, we prove Theorem~\ref{thm:IWvar}, which is a seminorm variant of Theorem~\ref{thm:IW:upper}, and will handle all seminorms arising in \cite{MSS, MST1, MST2, MSZ3}, see Definition~\ref{def:RM}. In Section~\ref{sec:B}, we will show how our weighted variants of the Ionescu--Wainger theory lead to unified and  short proofs of all results from \cite{MSS, MST1, MST2, MSZ3}, see Theorem~\ref{bourgain1}.

\item These weighted formulations of Theorem~\ref{thm:IW:upper} and Theorem~\ref{thm:IWvar}, together with the fact that they work with the set of canonical fractions \(\mathcal R^d_{\le N}\) from \eqref{IWeq:372}, are among the most important outcomes of this paper that significantly simplify proof strategies. The first outcome avoids using complicated approximations handling the multipliers involving arithmetic components. The second avoids misalignment arising from working with the Ionescu--Wainger fractions \(\widetilde{\mathcal R}^d_{\le N}\) from \eqref{eq:43}.

\item  Here we also note that the norm in \eqref{IW UPPER} is at most of order \(N^{\mathbf{C}_{\rm IW}(r)/\log\log N}\), which improves the upper bound \(N^{\mathbf{C}_{\rm IW}(r)\log\log\log N / \log\log N}\) from \cite{KMPWW}. This is achieved by a slightly more delicate treatment of the divisor functions that arise in our argument. In fact, our bound \(N^{\mathbf{C}_{\rm IW}(r)/\log\log N}\) coincides with the upper bound for the divisor function. In Theorem~\ref{thm:IW:upper}, it is important that the norm in \eqref{IW UPPER} is controlled by \(N^{\gamma}\) for any fixed \(\gamma\in(0,1)\), which covers all practical applications of inequality \eqref{IW UPPER} that we are aware of. Of course the latter bound is implied by our upper bound.

\item A weighted analog of Theorem~\ref{thm:IW:upper} for the Ionescu--Wainger fractions \(\widetilde{\mathcal R}^d_{\le N}\) in place of canonical fractions \(\mathcal R^d_{\le N}\) can be obtained by adapting the arguments from Tao's paper \cite{TaoIW}. As in \cite{TaoIW}, the implied bound in \eqref{IW UPPER} is independent of \(N\). 

\item Surprisingly, the dependence of the norm in inequality \eqref{IW UPPER} for \(\mathcal R^d_{\le N}\) on the parameter \(N\) cannot be removed, which stands in sharp contrast to Tao's result \cite{TaoIW}. This provides a negative answer to \cite[Problem~1.43]{KMPWW}, which was rather unexpected.

\item Although the bounds in \eqref{IW UPPER} for \(\widetilde{\mathcal R}^d_{\le N}\) are uniform in \(N\), in contrast to our non-uniform bounds in \(N\) for \(\mathcal R^d_{\le N}\) given in \eqref{IW UPPER}, the advantage of using Theorem \(\ref{thm:IW:upper}\) is twofold. First, it aligns with the set of canonical fractions \(\mathcal R^d_{\le N}\) that naturally arise in the circle method. Second, the set of canonical fractions allows us to relax the decay in condition \eqref{eq:44} from exponential to polynomial, as in \eqref{eq:support}. The latter was essential in establishing a multilinear variant of Weyl's inequality in \cite{KMPWW}.
\end{enumerate}

The penultimate remark can be quantified, leading to the second main result of the paper.
       
\begin{theorem} \label{thm:IW:lower} Let $p \in (1,\infty) \setminus \{2\}$. Then there exists $c_p \in (0,1)$ such that the following is true. Fix a nonzero Schwartz function $\mathfrak m \colon \R \to \C$ supported on ${\bf Q}$ and let $(\vartheta_N)_{N \in \Z_+}$ be a~given sequence of numbers $\vartheta_N \in (0,1)$. Then, for infinitely many $N \in \Z_+$, we have
\begin{equation*}
\bigl\| T_{\Z} \big[ \llbracket 1 \mid  {\mathfrak m}_{\varepsilon_N}\rrbracket_{\mathcal R_{\le N}}\big]  \bigr\|_{\ell^{p}(\Z) \to \ell^{p}(\Z)}
\gtrsim_{p}
e^{(\log N)^{c_p}} = N^{1 / (\log N)^{1-c_p}}
\end{equation*}
with some $0 < \varepsilon_N \leq \vartheta_N$ and ${\mathfrak m}_{\varepsilon_N}(x) \coloneqq {\mathfrak m}(x/\varepsilon_N)$.
\end{theorem}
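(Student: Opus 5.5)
The plan is to separate the continuous factor from the arithmetic one by a transference argument, and then prove an exponential lower bound for a purely arithmetic projection on a finite cyclic group. Write $\Sigma_N=\mathcal R_{\le N}$ and let $Q_N$ be a common denominator, e.g.\ $Q_N={\rm lcm}(1,\dots,N)$.

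\emph{Step 1 (reduction to a finite group).} For $\varepsilon_N$ small enough (in particular $\varepsilon_N\le\vartheta_N$, and much smaller than $Q_N^{-1}$ and than the spacing of $\Sigma_N$), I would test the operator on functions
\[
f(x)=g(x \bmod Q_N)\,\varphi(\varepsilon_N x),
\]
with $g\colon \Z/Q_N\Z\to\C$ arbitrary and $\varphi$ a fixed Schwartz function. Expand $g$ in characters $e(ax/Q_N)$. The multiplier $\mathfrak m_{\varepsilon_N}(\cdot-a/q)$ acts on the modulation at frequency $a/q$ and kills all other modulations, since the supports are disjoint. Hence
\[
T_{\Z}\big[\llbracket 1\mid \mathfrak m_{\varepsilon_N}\rrbracket_{\Sigma_N}\big]f(x)\approx (P_{\Sigma_N}g)(x \bmod Q_N)\cdot \big(T_{\R}[\mathfrak m]\varphi\big)(\varepsilon_N x),
\]
with an error that is negligible as $\varepsilon_N\to0$. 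Here $P_{\Sigma}$ is the Fourier projection on $\Z/Q_N\Z$ onto the frequencies in $\Sigma$. Since $\mathfrak m\neq0$, we can choose $\varphi$ with $T_{\R}[\mathfrak m]\varphi\neq0$. By equidistribution of $x\bmod Q_N$ along the slowly varying envelope, the $\ell^p$ norms factor as $\|P_{\Sigma_N}g\|_{\ell^p(\Z/Q_N\Z)}$ times a constant depending only on $\mathfrak m,\varphi$. So it suffices to show $\|P_{\Sigma_N}\|_{\ell^p(\Z/Q_N\Z)\to \ell^p(\Z/Q_N\Z)}\gtrsim_p e^{(\log N)^{c_p}}$ for infinitely many $N$.

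\emph{Step 2 (a multiplicative sub-structure).} Fix a squarefree $M=p_1\cdots p_k$ with $M\mid Q_N$. On functions $g$ lifted from $\Z/M\Z\cong\prod_j\Z/p_j\Z$, the projection $P_{\Sigma_N}$ keeps exactly the frequencies $a/d$ with $d\mid M$ and $d\le N$. In tensor form,
\[
P_{\Sigma_N}=\sum_{S\subseteq[k],\ \prod_{j\in S}p_j\le N}\ \bigotimes_{j\in S}(I-E_j)\otimes\bigotimes_{j\notin S}E_j ,
\]
where $E_j$ is the averaging (mean) on $\Z/p_j\Z$. The choice of primes is as follows. Take $k\approx(\log N)^{c}$ and all $p_j$ in a short interval $[P,P(1+k^{-2})]$ with $P^{k/2}\approx N$, so that $P=\exp(\sim(\log N)^{1-c})$ and the prime number theorem supplies enough primes. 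Choose $N$ (infinitely many such $N$) so that the condition $\prod_{j\in S}p_j\le N$ is equivalent to $|S|\le k/2$. Then $P_{\Sigma_N}$ restricted to this subspace is the ``level-$\le k/2$'' projection in the product of the two-level decompositions $L^p(\Z/p_j\Z)=\C\oplus L^p_0(\Z/p_j\Z)$.

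\emph{Step 3 (main obstacle: an exponential lower bound for level truncation).} I must show that this level projection has $\ell^p$ norm at least $e^{c'k}$ when $p\neq2$; by duality it suffices to treat $p>2$. I would test on product functions $g=\bigotimes_j(1+t h_j)$, with $h_j$ mean zero on $\Z/p_j\Z$ (for instance normalized characters or indicator-based functions) and $t$ a suitable complex parameter. On such functions the projection becomes truncation of the generating polynomial $\prod_j(1+tz_j)$ at degree $k/2$. The norms then reduce to moments of sums of i.i.d.\ variables, and the truncation can be written as a contour integral in an auxiliary variable.

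The goal is to compare $\|g\|_p^p=\prod_j\|1+th_j\|_p^p$ with the $p$-th moment of the truncated sum. Tuning $|t|$ and the phase so that the $p$-norm of a single factor is strictly smaller than what the middle-layer truncation retains gives a ratio $\ge (1+\delta_p)^k$; this gain is possible precisely because $p\neq2$. This computation is where I expect the real work to lie, since it is a Krawtchouk/large-deviation type estimate uniform in the primes $p_j$. Combining it with Step 1 gives the bound $e^{c'k}=e^{(\log N)^{c_p}}$.
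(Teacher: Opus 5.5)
Your route is sound and genuinely different from the paper's, and it gives a stronger conclusion. Your Step~1 is the general form of the paper's Step~1. Testing on $\delta_0$ and dividing by $\|T_{\Z}[\llbracket 1\mid\varphi_{\varepsilon_N}\rrbracket_{\mathcal Q(N)}]\delta_0\|_{\ell^p(\Z)}$ is exactly your transference with $g$ the Dirac mass on $\Z/N\Z$. If you take $\widehat{\varphi}$ compactly supported and equal to $1$ on ${\bf Q}$, your identity is exact once $\varepsilon_N$ is small.

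The paper then passes to the exact-denominator set $\mathcal R(N)$, with $N$ the product of the first $K$ primes. There the kernel is a Ramanujan sum and the multiplier is your full tensor product $\bigotimes_j(I-E_j)$. With the Dirac test function, each prime $q$ contributes only a factor of about $(1-q^{-1})^p(1+q^{1-p})$. So all the gain comes from the smallest primes, through $\sum_{q\lesssim\log N}q^{1-p}$, and the resulting exponent ($2-p-\delta$ for $p<2$) tends to $0$ as $p\to2$. In exchange, the paper needs nothing beyond the prime number theorem and Mertens' theorem.

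You keep $\mathcal R_{\le N}$ and use $k$ primes of nearly equal size, so that the cutoff $d\le N$ becomes the middle-level truncation $|S|\le k/2$. A balanced test function then gains a fixed factor per prime, independent of the size of the primes. This costs primes in short intervals, but yields any exponent $c<1$, uniformly in $p$.

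It can be pushed further. Take primes in $[P,P(1+1/k)]$ with $P\simeq k^3$, which Huxley's short-interval prime number theorem supplies; the bound $(1+1/k)^{k/2}<P$ still separates the levels. This gives $k\simeq\log N/\log\log N$ and a lower bound $N^{c_p/\log\log N}$. That matches the upper bound of Theorem~\ref{thm:IW:upper} up to the constant in the exponent.

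The only piece you left open is Step~3, and it needs no Krawtchouk or large-deviation asymptotics. For $p>2$, let $h_j$ equal $1$ on $(p_j-1)/2$ residues, $-1$ on $(p_j-1)/2$ residues, and $0$ on the remaining one, and put $g=\prod_j(1+{\bm i}h_j)$.
\begin{itemize}
\item Since $|g|\le 2^{k/2}$, you have $\|g\|_p\le 2^{k/2}$.
\item On the set where all $h_j=1$, of normalized measure $2^{-k}\prod_j(1-p_j^{-1})\ge 2^{-k-1}$, the truncation equals $\sum_{m\le k/2}\binom{k}{m}{\bm i}^m$.
\item Its real and imaginary parts are alternating sums of increasing binomial coefficients, each of size about $\frac12\binom{k}{k/2}$, so its modulus is $\gtrsim 2^kk^{-1/2}$.
\end{itemize}
Hence $\|P_{\Sigma_N}g\|_p/\|g\|_p\gtrsim 2^{k(1/2-1/p)}k^{-1/2}$, and self-adjointness of $P_{\Sigma_N}$ handles $p<2$. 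With this inserted, your argument proves the theorem.
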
 

Theorem~\ref{thm:IW:lower} demonstrates that applications of inequality \eqref{IW UPPER} that would require its norm to be independent of the parameter \(N\) will not be possible. This is only possible with the Ionescu--Wainger fractions \(\widetilde{\mathcal R}^d_{\le N}\), as was proved by Tao \cite{TaoIW}. 

Finally, let us emphasize that the method of proof of Theorem~\ref{thm:IW:lower} can be adapted to show that condition \ref{cond:B} imposed on our weights in Definition \ref{def:IW-weight} is necessary; if dropped, one can show that Theorem~\ref{thm:IW:upper} fails, see Remark~\ref{rem:0} in Section~\ref{sec:lower}.

\subsection{Organization of the paper}  In Section~\ref{sec:not}, we gather the necessary notation and some remarks on the weights from Definition \ref{def:IW-weight}. In Section~\ref{sec:logloglog} and Section~\ref{sec:lower}, we prove Theorem~\ref{thm:IW:upper} and Theorem~\ref{thm:IW:lower}, respectively. In Section~\ref{sec:var}, we introduce the concept of Rademacher--Menshov seminorms (see Definition~\ref{def:RM}) and establish a seminorm variant of Theorem~\ref{thm:IW:upper} that generalizes \cite[Theorem~3.32]{KMPWW}. Finally, in Section~\ref{sec:B}, we illustrate how our weighted Ionescu--Wainger theory works in practice, giving a relatively short and conceptually clean proof of Bourgain's pointwise ergodic theorem for polynomial iterates, see Theorem~\ref{bourgain}. To this end, we prove its quantitative versions in Theorem~\ref{bourgain1}. The latter provides the sharpest quantitative information on pointwise convergence for \eqref{eq:36}.

\section{Notation and simple tools}
\label{sec:not}
We now set up the notation that will be used throughout the article. 

\subsection{Basic notation} 
Let $\Z_+ \coloneqq \{1,2,\dots\}$ be the set of positive integers, $\NN \coloneqq \{0,1,2,\dots\}$ be the set of nonnegative integers, and $\RR_+ \coloneqq (0,\infty)$ be the set of positive real numbers. Given $d \in \Z_+$, the sets $\Z^d, \, \Q^d, \, \RR^d$, and $ \C^d$ have their standard meaning. The $d$-dimensional unit cube in $\RR^d$ centered at the origin is denoted by ${\bf Q}^d \coloneqq [-1/2,1/2]^d$.

If $A\subseteq\C^d$, then its translations and dilations are defined by $A+u\coloneqq\{a+u: a\in A\}$ for all $u\in \C^d$ and $vA\coloneqq\{va: a\in A\}$ for all $v\in\C$, respectively.

For $N \in \R_+$, we define $[N] \coloneqq \Z \cap (0,N]$. For $N \in \R$  and  $A\subseteq \R$, we also define the sets
\begin{align*}
A_{<N}&\coloneqq A\cap(-\infty, N),  \quad \phantom{],} A_{>N}\coloneqq A\cap(N, \infty),\\
A_{\le N}&\coloneqq A\cap(-\infty, N],  \quad 
\phantom{),} A_{\ge N}\coloneqq A\cap[N, \infty).
\end{align*}

Here and throughout, the $d$-dimensional torus $\TT^d \coloneqq \RR^d / \Z^d$ is endowed with the $\ell^\infty$ metric on $\TT^d$ induced by the norm $| \xi |_\infty \coloneqq \sup_{i \in [d]} {\rm dist}(\xi_i, \Z)$. Moreover, we will identify $\TT^d$ with $[0,1)^d$ or, up to a set of measure zero, with ${\bf Q}^d$, as will be clear from the context.

Finally, if $A$ is a set, then its indicator function is denoted by $\ind{A}$. When $A$ is finite, its number of elements is denoted by $|A|$. If $S$ is a statement, then $\ind{S}$ denotes its indicator, equal to $1$ if $S$ is true and $0$ if $S$ is false. In particular, we have $\ind{x\in A} \equiv \ind{A}(x)$.

\subsection{Asymptotic notation} Throughout the paper, $C\in\RR_+$ denotes an absolute constant whose value may change from line to line. For two quantities $A,B\in\RR_{\ge 0}$, we write $A \lesssim B$ or $B \gtrsim A$ if $A\le CB$ for some $C\in\RR_+$. If $A \lesssim B\lesssim A$, then we write $A \simeq B$. We will use the symbols $\lesssim_{\delta}$ or $\simeq_{\delta}$ to emphasize that the implicit constant $C$ depends on a parameter $\delta$.
    
\subsection{Function spaces}
All vector spaces in this paper will be defined over the complex numbers $\C$. The triple $(X, \mathcal B(X), \mu)$ is a measure space $X$ with $\sigma$-algebra $\mathcal B(X)$ and $\sigma$-finite measure $\mu$. The space of all measurable functions  whose modulus is integrable with $p$-th power is denoted by $L^p(X, \mu)$ for $p\in(0, \infty)$, whereas $L^{\infty}(X, \mu)$ denotes the space of all essentially bounded measurable functions. More generally, if $(B, \|\, \cdot \, \|_B)$ is a  separable normed vector space, then  $L^{p}(X,\mu;B)$ denotes the space of all measurable functions $F \colon X\to B$ (up to $\mu$-almost everywhere equivalence) such that
\begin{align*}
\|F\|_{L^{p}(X, \mu;B)} \coloneqq \left\|\|F\|_B\right\|_{L^{p}(X, \mu)}<\infty.
\end{align*}

We shall abbreviate $L^p(X, \mu)$ to $L^p(X)$ and $L^{p}(X,\mu;B)$ to $L^{p}(X;B)$. If $X$ is endowed with counting measure, then we further abbreviate $L^p(X)$ to $\ell^p(X)$ and $L^p(X; B)$ to $\ell^p(X; B)$.

For a continuous linear map $T \colon B_1 \to B_2$ between two normed vector spaces $B_1$ and  $B_2$,  its operator norm will be denoted by $\|T\|_{B_1 \to B_2}$. The space of all continuous linear mappings $T \colon B_1 \to B_2$ will be denoted by $L(B_1,B_2)$. 

\subsection{Fractions} 
For integers $p, q\in\Z_+$, we write $p\mid q$ if $p$ divides $q$ or, equivalently, $q \in p \Z$.

For $a = (a_1,\ldots, a_d) \in \Z^d$ and $q\in\Z_+$, we denote by ${\rm gcd}(a,q)$ the largest positive integer $n\in\Z_+$ that divides each of the integers $q,a_1, \ldots, a_d$. Observe that any element of $\QQ^d$ has a unique representation as $a/q$ with $a \in \Z^d$ and $q\in \Z_{+}$ such that ${\rm gcd}(a,q)=1$.

Given $N \in \Z_+$, define the $1$-periodic sets of canonical fractions by
\begin{align} \label{IWeq:372} \mathcal R_{\le N}^d \coloneqq \big\{ a/q \in(\QQ/ \Z)^d: q \in [N] \text{
and } {\rm gcd} (a, q)=1\big\}.
\end{align}
Here each element of ${\mathcal R}^d_{\le N}$ is represented by a real rational $a/q \in \QQ^d$ which we identify with the equivalence class $a/q + \Z^d$. Similarly, for $q \in \Z_+$, we define the set of $q$-quotients by $\mathcal Q(q) \coloneqq (q^{-1} \Z / \Z)^d$ and the set of reduced fractions by $\mathcal R(1) \coloneqq \mathcal Q(1)$ or, when $q>1$, by
\[
{\mathcal R}(q) \coloneqq \{ a/q \in \mathcal Q(q) : {\rm gcd}(a,q) = 1 \}.
\] 

\subsection{Convolutions} Let $(\G, +)$ be a locally compact abelian group (LCA group) equipped with a Haar measure $\lambda_{\G}$. For two functions $f, g\in L^1(\G)$ we define their convolution by 
\begin{align*}
f*_{\G}g(x) \coloneqq f*g(x) \coloneqq\int_{\G}f(x-y)g(y)\,{\rm d}\lambda_{\G}(y).
\end{align*}
For the sake of simplicity, we will always abbreviate $*_{\G}$ to $*$; this notation will always be clear from the context and will not cause any confusion. Since $\G$ is abelian, we readily see that $f*g=g*f$. We will be mainly working with $\G=\R^d$ or $\G=\TT^d$  endowed with Lebesgue measure or $\G=\Z^d$ endowed with counting measure.

\subsection{Fourier transform}
We shall write $e(z)=e^{2\pi {\bm i} z}$ for every $z\in\C$, where ${\bm i}^2=-1$.  As before let $(\G, +)$ be an LCA group equipped with a Haar measure $\lambda_{\G}$. It is well known (see for instance \cite{Rudin}) that every LCA group $\G$ has a Pontryagin dual $\hat \G = (\hat \G,+)$, an LCA group with a Haar measure $\lambda_{\hat \G}$ and a pairing, i.e.~a continuous bihomomorphism $\G \times \hat \G\ni (x,\xi) \mapsto \langle x , \xi\rangle\in\TT$, such that the Fourier transform $\F_{\G} \colon L^1(\G) \to L^\infty(\hat \G)$ given by
\[
 \F_{\G} f(\xi) \coloneqq \int_{\G} f(x) e(-\langle x , \xi\rangle)\,{\rm d}\lambda_{\G}(x), \qquad \xi\in\hat \G,
\]
extends to a unitary map from $L^2(\G)$ to $L^2(\hat \G)$; in particular, we have Plancherel's identity
\[
\|\F_\G f\|_{L^2(\hat \G)}=\|f\|_{L^2(\G)}, \qquad f\in L^2(\G).
\]
Moreover, the inverse Fourier transform $\F_\G^{-1} \colon L^2(\hat \G) \to L^2(\G)$ is given by the formula
\[
 \F_\G^{-1} f(x) = \int_{\hat \G} f(\xi) e(\langle x , \xi\rangle)\,{\rm d}\lambda_{\hat \G}(\xi), \qquad x\in\G,
\]
whenever $f\in L^1(\hat \G) \cap L^2(\hat \G)$.

We will mainly work with concrete pairs $(\G,\hat \G)$ of Pontryagin dual LCA groups:
\begin{itemize}
\item [(i)]  If $\G = \R^d$ with Lebesgue measure $\lambda_{\R^d} = {\rm d}x$, then $\hat \G = \R^d$ with Lebesgue measure $\lambda_{\R^d} = {\rm d}\xi$ is a Pontryagin dual, with pairing $\langle x , \xi\rangle \coloneqq x \cdot \xi$. Also, for any $f \in L^1(\R^d)$,
\begin{align*}
\mathcal F_{\R^d} f(\xi)  \coloneqq  \int_{\R^d} f(x) e(-x\cdot\xi) \,{\rm d}x, \qquad \xi\in\R^d.
\end{align*}
\item[(ii)]  If $\G = \Z^d$ with counting measure $\lambda_{\Z^d}$, then $\hat \G = \TT^d$ with Lebesgue measure $\lambda_{\TT^d} = {\rm d}\xi$ is a Pontryagin dual, with pairing $\langle x , \xi\rangle \coloneqq x \cdot \xi$. Also, for any $f \in \ell^1(\Z^d)$,
\begin{align*}
\mathcal F_{\Z^d}f(\xi) \coloneqq \sum_{x \in \Z^d} f(x) e(-x\cdot\xi), \qquad \xi\in \TT^d.
\end{align*}
\item[(iii)] If $\G = (\Z/q\Z)^d$ for some $q \in \Z_+$ with normalized counting measure $\lambda_{(\Z/q\Z)^d}$, then $\hat{\G} = (q^{-1}\Z/\Z)^d$ with counting measure $\lambda_{(q^{-1}\Z/\Z)^d}$ is a Pontryagin dual, with pairing $\langle x , \xi\rangle \coloneqq x \cdot \xi$. Also, for any $f \in L^1((\Z/q\Z)^d)$,
\begin{align*}
\mathcal F_{(\Z/q\Z)^d}f(\xi) \coloneqq q^{-d}\sum_{x \in (\Z/q\Z)^d} f(x) e(-x\cdot\xi), \qquad \xi\in (q^{-1}\Z/\Z)^d.
\end{align*}
\end{itemize}

\subsection{Multiplier operators}  
For $\mathfrak m \in L^\infty(\hat{\G})$, we define the multiplier operator by  
\begin{align*}
T_{\G}[\mathfrak m]f(x) \coloneqq \int_{\hat{\G}} \mathfrak m(\xi) \, \mathcal F_{\G}f(\xi) \, e(x \cdot \xi) \, {\rm d}\lambda_{\hat{\G}}(\xi), \qquad x\in \G,
\end{align*}
where $f \colon \G \to \C$ is any function for which this formula makes sense.

\subsection{Multifrequency extensions} 
Let $\mathfrak m \in L^\infty(\hat{\G})$, a finite set $\Sigma \subseteq \hat{\G}$, and $G \colon \hat{\G}\to \mathbb C$ be given. We define a \textit{weighted multifrequency extension} of $\mathfrak m$ with respect to $\Sigma$ by setting
\begin{align}
\label{eq:1}
\big\llbracket G \mid  \mathfrak m\big\rrbracket_{\Sigma}(\xi)\coloneqq\sum_{\sigma\in\Sigma}G(\sigma)\mathfrak m(\xi-\sigma), \qquad \xi\in \hat{\G}.
\end{align}
If $G\equiv 1$, then $\llbracket 1 \mid  \mathfrak m\rrbracket_{\Sigma}$ is the \textit{multifrequency extension} of $\mathfrak m$ with respect to  $\Sigma$. If $\Sigma=\{\sigma\}$, then we write $\llbracket G \mid  \mathfrak m\rrbracket_{\Sigma}=\llbracket G \mid  \mathfrak m\rrbracket_{\sigma}$. If $\Sigma$ is the singleton of the neutral element of $\hat{\G}$, then we simply have $\llbracket 1 \mid  \mathfrak m\rrbracket_{\Sigma}=\mathfrak m$.

\subsection{Gaussian weights}
Let $w\colon \mathbb{Z}^k \to \mathbb{C}$ be a weight that satisfies for some $C_w \in \R_+$ the following $\ell^1$ bound
\begin{equation}\label{eq:w-L1}
\sup_{q\in\Z_+}q^{-k}  \sum_{r\in[q]^k} |w(r)| \le C_w.
\end{equation}
Here, we shall show that, for any function $\Phi \colon \Z^k\to \Z^d$, a \textit{generalized Gaussian sum}
\begin{align*}
G_w(a/q)\coloneqq q^{-k}\sum_{r\in[q]^k}w(r) \,
e(\Phi(r) \cdot a/q), \qquad a/q\in(\Q/\Z)^d,
\end{align*}
satisfies condition~\ref{cond:B}. Indeed, fix $d, k \in \Z_+$ and $\Phi\colon \mathbb{Z}^k\to\mathbb{Z}^d$. We show that $G_w$ satisfies condition~\ref{cond:B} with $U_G = C_w$, that is,
\begin{equation}\label{eq:Gauss-IW}
   \sup_{q \in \Z_+} q^{-d}
    \sum_{m \in (\Z / q \Z)^d}
    \Big|
    \sum_{a \in [q]^d} 
    G_w(a/q)\, e(m \cdot a/q)
    \Big|
    \le C_w.
\end{equation}

Fix $q \in \Z_+$. For each $m\in{(\Z / q \Z)^d}$, we have  
\[
\mathcal F^{-1}_{(\Z / q \Z)^d} (G_w)(m)
= \sum_{a \in [q]^d} G_w(a/q)\, e(m \cdot a/q).
\]
Substituting the definition of $G_w$, interchanging sums, and using orthogonality, we obtain
\begin{align*}
    \mathcal F^{-1}_{(\Z / q \Z)^d} (G_w)(m) = 
    q^{-k} \sum_{r\in[q]^k} w(r)
       \sum_{a \in [q]^d}
       e\big( (\Phi(r) + m) \cdot a / q \big)=q^{-k+d}
   \sum_{\substack{r\in[q]^k\\ \Phi(r) + m\in q\Z^d}} w(r).
\end{align*}
By taking absolute values, summing over $m$, and interchanging summation order, we have
\begin{align*} 
  \sum_{m \in (\Z / q \Z)^d} |\mathcal F^{-1}_{(\Z / q \Z)^d} (G_w)(m)|
  \leq 
  q^{-k+d}
  \sum_{r\in[q]^k} |w(r)|,
\end{align*}
since for each $r\in[q]^k$ exactly one $m \in (\Z / q \Z)^d$ satisfies $\Phi(r) + m \in q\Z^d$. Then \eqref{eq:w-L1} gives
\[
\| \mathcal F^{-1}_{(\Z / q \Z)^d} (G_w) \|_{L^1({(\Z / q \Z)^d})} =
  q^{-d} \sum_{m \in (\Z / q \Z)^d} |\mathcal F^{-1}_{(\Z / q \Z)^d} (G_w)(m)|
  \le 
  q^{-k} \sum_{r\in[q]^k} |w(r)|
  \leq
  C_w,
\]
as desired. In particular, if $w$ is nonnegative and normalized by $\sum_{r\in[q]^k} w(r) = q^k$, then $\mathcal F^{-1}_{(\Z / q \Z)^d} (G_w) \ge 0$ and the above inequality becomes an equality with $C_w = 1$.

\section{Proof of Theorem~\ref{thm:IW:upper}: the upper bound} \label{sec:logloglog}

In this section, we prove Theorem~\ref{thm:IW:upper}. Although its content shares many similarities with \cite[Section~2]{MSZ3} or \cite[Section~3]{KMPWW}, there are some subtle differences that may cause difficulties, so for convenience we include detailed explanations to ease the exposition. We begin with the formulation of the sampling principle of Magyar, Stein, and Wainger \cite{MSW}.

\begin{theorem}
\label{prop:msw}
For each $d\in\Z_+$, there exists ${\bf C}_{\rm MSW}(d) \in \R_+$ such that the following holds. Let $p \in [1,\infty]$ and $q\in\Z_+$, and let $B_1$ and $B_2$ be two finite-dimensional Banach spaces. If $\mathfrak m \colon \RR^d \to L(B_1, B_2)$ is a measurable function supported on $q^{-1}\mathbf{Q}^d$, then
\[
\big\|T_{\Z^d} \big[ \llbracket 1 \mid  \mathfrak m\rrbracket_{{\mathcal Q(q)}}\big]\big\|_{\ell^{p}(\Z^d;B_1)\to \ell^{p}(\Z^d;B_2)}\le {\bf C}_{\rm MSW}(d) \|T_{\RR^d}[\mathfrak m]\|_{L^{p}(\RR^d;B_1)\to L^{p}(\RR^d;B_2)}.
\]
\end{theorem}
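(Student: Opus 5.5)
We will prove the Magyar--Stein--Wainger sampling principle by a periodization argument. The idea is to write the discrete operator $T_{\Z^d}\big[\llbracket 1\mid\mathfrak m\rrbracket_{\mathcal Q(q)}\big]$ as a composition of three maps: an embedding of $\Z^d$-functions into $\R^d$-functions, the continuous operator $T_{\R^d}[\mathfrak m]$, and a restriction back to $\Z^d$. Each of the embedding and restriction maps must cost at most a dimensional constant.

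\textbf{Reduction to residue classes.} Since $\mathfrak m$ is supported on $q^{-1}\mathbf Q^d$, the translates $\mathfrak m(\cdot-a/q)$, $a/q\in\mathcal Q(q)$, have (essentially) disjoint supports on $\TT^d$. Decompose $x\in\Z^d$ as $x=qy+s$ with $s\in[q]^d$ and $y\in\Z^d$. A direct Fourier computation shows that the multiplier $\sum_{a}\mathfrak m(\xi-a/q)$ is $q^{-1}$-periodic. Consequently, the operator commutes with the residue structure: for $f_s(y)\coloneqq f(qy+s)$ we get
\[
\big(T_{\Z^d}[\llbracket 1\mid\mathfrak m\rrbracket_{\mathcal Q(q)}]f\big)(qy+s)=\sum_{s'} \big(T_{\Z^d}[\mathfrak m(q^{-1}\cdot)]\,\tilde f_{s'}\big)(y)\cdots.
\]
More cleanly, the kernel of the operator is $K(x)=q^d\,\check{\mathfrak m}(x)\,\ind{q\Z^d}(x)$, where $\check{\mathfrak m}=\mathcal F^{-1}_{\R^d}\mathfrak m$ (by Poisson summation). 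Thus the operator maps each coset $s+q\Z^d$ to itself and acts there as convolution on $\Z^d$ with kernel $q^d\check{\mathfrak m}(qy)$. This kernel is exactly the kernel of $T_{\Z^d}[\mathfrak m_q]$, where $\mathfrak m_q(\xi)\coloneqq\mathfrak m(\xi/q)$, periodized; here $\mathfrak m_q$ is supported on $\mathbf Q^d$. Since the $\ell^p$ norm splits over cosets, and since $\|T_{\R^d}[\mathfrak m_q]\|=\|T_{\R^d}[\mathfrak m]\|$ by dilation invariance, it suffices to treat the case $q=1$. In that case we must show that for $\mathfrak m$ supported on $\mathbf Q^d$,
\[
\|T_{\Z^d}[\mathfrak m]\|_{\ell^p\to\ell^p}\lesssim_d\|T_{\R^d}[\mathfrak m]\|_{L^p\to L^p}.
\]

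\textbf{The case $q=1$.} Fix a Schwartz function $\phi$ on $\R^d$ with $\widehat\phi\equiv1$ on $\mathbf Q^d$ and $\widehat\phi$ supported in $2\mathbf Q^d$, and let $\psi$ be Schwartz with $\widehat\psi\equiv1$ on $2\mathbf Q^d$ and supported in $4\mathbf Q^d$. For $f\in\ell^p(\Z^d;B_1)$ define the embedding
\[
Ef(x)\coloneqq\sum_{n}f(n)\phi(x-n),\qquad x\in\R^d.
\]
Then $\|Ef\|_{L^p}\lesssim_d\|f\|_{\ell^p}$ by a Schur/Young argument, since $\phi$ decays rapidly. The identity $T_{\R^d}[\mathfrak m]Ef(n)=T_{\Z^d}[\mathfrak m]f(n)$ for $n\in\Z^d$ follows from Fourier inversion. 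Indeed, $\widehat{Ef}(\xi)=\widehat\phi(\xi)\mathcal F_{\Z^d}f(\xi)$, and $\mathfrak m\widehat\phi=\mathfrak m$; then the integral of $\mathfrak m(\xi)\mathcal F_{\Z^d}f(\xi)e(n\cdot\xi)$ over $\R^d$ equals the integral over $\mathbf Q^d\simeq\TT^d$, because $\mathfrak m$ is supported on $\mathbf Q^d$.

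\textbf{The main obstacle: the restriction step.} It remains to control $\|g|_{\Z^d}\|_{\ell^p}$ by $\|g\|_{L^p}$ for $g=T_{\R^d}[\mathfrak m]Ef$, which is where a bounded frequency support is needed. Since $\widehat g$ is supported in $\mathbf Q^d$, we have $g=g*\psi$. Hence $|g(n)|\le\int|g(n-y)||\psi(y)|dy$, and we plan a local mean-value estimate: $\|g(n)\|_{B_2}\lesssim_d\int\|g(n-y)\|_{B_2}(1+|y|)^{-d-1}dy$. Summing over $n$ with Hölder's inequality (or Minkowski's inequality for $p\ge1$) gives $\|g|_{\Z^d}\|_{\ell^p}\lesssim_d\|g\|_{L^p}$. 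The vector-valued setting causes no trouble, because all these estimates use only the triangle inequality in $B_2$. Finite dimensionality is used only to justify measurability and the Fourier identities. The case $p=\infty$ is handled identically. Combining the three steps yields the claimed bound with ${\bf C}_{\rm MSW}(d)$ depending only on $d$, via $\|\phi\|_{L^1}$ and the decay constants of $\psi$.
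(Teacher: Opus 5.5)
Your proof is correct. The paper gives no argument of its own and simply cites Magyar--Stein--Wainger, and your proof is essentially the standard one behind that result: you compute the kernel $q^d\ind{q\Z^d}\,\mathcal F^{-1}_{\R^d}\mathfrak m$, split into residue classes and rescale to reduce to $q=1$, then embed $\ell^p$ into $L^p$, apply $T_{\R^d}[\mathfrak m]$, and restrict the band-limited output $g=g*\psi$ back to $\Z^d$. Every step costs only a constant depending on $d$, uniformly in $p$.
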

\begin{proof}
The proof of Theorem~\ref{prop:msw} can be found in \cite[Corollary~2.1, p.~196]{MSW}. We also refer to \cite{MSZ1} for a generalization of Theorem~\ref{prop:msw} to real interpolation spaces.
\end{proof}

\subsection{Simple reductions} 
We begin the proof of Theorem~\ref{thm:IW:upper} with a list of reductions.

\begin{enumerate}[label=(\roman*), itemsep=2pt]
\item Since the case $r=1$ follows by Plancherel's theorem, we may assume that $r \in \N_{\geq 2}$.
\item By duality and interpolation, it suffices to prove \eqref{IW UPPER} for $p = 2r$.
\item By replacing $\mathfrak m$ with $\tilde{\mathfrak m} \colon \RR^d \to L(H_1\oplus H_2, H_1\oplus H_2)$ defined by 
\[
\tilde{\mathfrak m}(x) (h_1, h_2) \coloneqq (0,\mathfrak m(x)(h_1)),
\]
we may assume that $H_1=H_2=H$.
\item By standard limiting arguments, we may assume that $H$ is finite-dimensional.
\item We may assume that $\mathbf{A}_{2r} \in (0,\infty)$ and then normalize $\mathfrak m$ so that $\mathbf{A}_{2r} = 1$.
\item Since Theorem~\ref{prop:msw} applied with $q=1$ gives \eqref{IW UPPER} with $N^{C_r / \log \log N} \mathbf{C}_{G}(\mathcal R^d_{\leq N})$ replaced by $N^{d+1}$, we may assume that $N \geq N_0$ for a fixed large integer $N_0\in\Z_+$. 
\end{enumerate}

After all these reductions, our aim is to prove \eqref{IW UPPER} with $p=2r$, which reads as follows 
\begin{equation*}
\big\| T_{\Z^d} \big[ \llbracket G \mid  \mathfrak m\rrbracket_{\mathcal R^d_{\le N}}\big] f \big\|_{\ell^{2r}(\Z^d; H)} \lesssim_{d,r}
N^{{\bf C}_{\rm IW}(r) / \log \log N}
\mathbf{C}_{G}(\mathcal R^d_{\leq N}) \|f\|_{\ell^{2r}(\Z^d; H)}.
\end{equation*}

\subsection{Uniqueness property}
We recall the following definition, which was essential in \cite{IW}.

\begin{definition}[Uniqueness property]
\label{up}
A sequence $(v_i)_{i \in [n]}$ has the \emph{uniqueness property} if there exists $j\in[n]$ such that $v_j\neq v_i$ for all $i\in[n]\setminus\{j\}$. In other words, the element $v_j$ occurs in the sequence $(v_i)_{i \in [n]}$ exactly once. This definition will primarily be used for sequences of rational numbers, but it is also applicable to more general sequences.
\end{definition}

Using Definition~\ref{up}, we now formulate an important abstract orthogonality principle.

\begin{proposition}[Abstract orthogonality principle]
\label{UP}
Given $r$ and $H$ as before, there exists ${\bf C}_{\rm UP}(r) \in \RR_+$ such that the following is true. Let $(X,\mathcal B(X), \mu)$ be a measure space and let $V_{1},\dots,V_{r} \subset \Z_+$ be finite sets, not necessarily disjoint. For every pair $(i,v)$ with $v \in V_{i}$, fix
$F_{i,v} \in L^{2r}(X;H)$. Suppose that, for each $2r$-tuple
\begin{equation*}
(v_{1,1}, v_{1,2}, v_{2,1}, v_{2,2}, \ldots, v_{r,1}, v_{r,2})
\in V_{1}^{2} \times V_{2}^{2} \times \dotsm \times V_{r}^{2}
\end{equation*}
with the uniqueness property, we have
\begin{align}
\label{IWeq:6}
\int_{X} \prod_{i \in [r]} \langle F_{i,v_{i,1}}(x), F_{i,v_{i,2}}(x) \rangle_{H} \, {\rm d}\mu(x) = 0. 
\end{align}
Then
\begin{equation*}
\int_{X} \prod_{i \in [r]} \Big \| \sum_{v \in V_{i}} F_{i,v}(x) \Big\|_{H}^{2} \, {\rm d}\mu(x)
\leq {\bf C}_{\rm UP}(r)
\int_{X} \prod_{i \in [r]} \Big( \sum_{v \in V_{i}} \| F_{i,v}(x)\|_{H}^{2} \Big) \, {\rm d}\mu(x).
\end{equation*} 
\end{proposition}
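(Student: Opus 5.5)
Expand both sides. The left-hand side equals
\[
\sum_{(v_{i,1},v_{i,2})_{i\in[r]}} \int_X \prod_{i\in[r]} \langle F_{i,v_{i,1}}(x),F_{i,v_{i,2}}(x)\rangle_H \,{\rm d}\mu(x),
\]
where the sum runs over all $2r$-tuples in $V_1^2\times\dots\times V_r^2$. By hypothesis \eqref{IWeq:6}, every tuple with the uniqueness property contributes zero. It therefore suffices to bound the sum over tuples \emph{without} the uniqueness property, i.e.\ tuples in which every value occurring appears at least twice among the $2r$ entries.

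The plan is to organize these surviving tuples by the equivalence relation (set partition) $\pi$ of the index set $I=\{(i,j): i\in[r],\, j\in\{1,2\}\}$ induced by equality of the entries. Since no value appears exactly once, every block of $\pi$ has size at least $2$. The number of such partitions depends only on $r$, so the constant ${\bf C}_{\rm UP}(r)$ can absorb a sum over partitions. For a fixed $\pi$, I bound pointwise in $x$. Using $|\langle F_{i,v},F_{i,w}\rangle|\le \|F_{i,v}\|\|F_{i,w}\|$, the contribution is at most
\[
\sum_{(u_B)_{B\in\pi}} \prod_{B\in\pi}\prod_{(i,j)\in B} a_{i,u_B}(x),
\qquad a_{i,v}(x)\coloneqq \|F_{i,v}(x)\|_H\ind{v\in V_i},
\]
where $u_B$ ranges over $\Z_+$. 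I drop the distinctness of the $u_B$; this only enlarges a sum of nonnegative terms. The sum then factorizes over blocks as $\prod_{B\in\pi}\sum_{u}\prod_{(i,j)\in B} a_{i,u}$.

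For a block $B$ with $|B|=k\ge 2$, I use H\"older in $u$ with exponent $k$, followed by the embedding $\ell^2\subset\ell^k$:
\[
\sum_u \prod_{(i,j)\in B} a_{i,u}\le \prod_{(i,j)\in B}\Big(\sum_u a_{i,u}^k\Big)^{1/k}\le \prod_{(i,j)\in B} S_i^{1/2},
\qquad S_i(x)\coloneqq\sum_{v\in V_i}\|F_{i,v}(x)\|_H^2 .
\]
This is the step where the condition "block size $\geq 2$" is essential; a singleton block would only give the $\ell^1$ norm, which is not controlled. Multiplying over blocks, each index $(i,j)\in I$ contributes exactly one factor $S_i^{1/2}$. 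The product is therefore $\prod_{i}S_i^{1/2}S_i^{1/2}=\prod_i S_i(x)$.

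Integrating over $X$ and summing over the finitely many admissible partitions yields the claim, with ${\bf C}_{\rm UP}(r)$ equal to the number of partitions of a $2r$-element set with no singleton blocks (bounded by the Bell number $B_{2r}$). The argument is independent of $H$. The only delicate point is the bookkeeping: the pointwise H\"older bound has to be applied block by block before integrating, so that no integrability beyond $L^{2r}$ is needed. Finiteness of all the integrals follows from $\int\prod_i S_i\le\prod_i\|S_i\|_{L^r}$ by H\"older, since each $S_i\in L^r$ because the $F_{i,v}\in L^{2r}$.
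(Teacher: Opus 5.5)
Your proof is correct. The paper gives no argument of its own and instead cites Corollary~2.24 of Mirek--Stein--Zorin-Kranich and Lemma~2.2 of Ionescu--Wainger. Your route is the standard one behind those sources: expand, discard the uniqueness-property tuples, group the remaining tuples by the partition of equal entries (all blocks of size at least $2$), and apply H\"older together with $\ell^2\subset\ell^k$ block by block. This yields a constant (the number of singleton-free partitions of a $2r$-element set) that is independent of $H$.
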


\begin{proof}
For a detailed proof, see \cite[Corollary~2.24]{MSZ3}. We also refer to \cite[Lemma~2.2, p.~363]{IW} for a scalar version of this principle.
\end{proof} 

Our next goal is to partition the set of canonical fractions \eqref{IWeq:372} into a controlled number of pieces to which Proposition~\ref{UP} can be applied efficiently. Here our approach significantly deviates from \cite{IW} and \cite{MSZ3}.  In these two papers an essential role was played by the Ionescu--Wainger rationals whose denominators were factored into two parts, one being a relatively large product of powers of small prime divisors, and the other being a product of a small number of powers of large prime divisors. In contrast to \cite{IW} and \cite{MSZ3}, instead of working with the Ionescu--Wainger rationals, we work directly with the set of canonical fractions \eqref{IWeq:372} and partition it using the new concept of $N$-lifted composites introduced in \cite[Section~3]{KMPWW}.

\begin{definition}[$N$-lifted composites]
\label{N-lifted}
Let $\mathbb P$ be the set of all prime numbers. For $N\in\Z_{+}$, an \emph{$N$-lifted composite} $Q_q$ of an integer $q \in [N]$ is defined by setting  
\begin{align*}
Q_q \coloneqq \prod_{p \in \PP \, : \, p | q} p^{\lfloor \log_p N \rfloor}
\end{align*}
with the convention $Q_1 \coloneqq 1$. In other words, $q$ and $Q_q$ are divisible by the same prime numbers, while $Q_q$ is a product of integers of the form $p^{\lfloor \log_p N \rfloor}$. Obviously, we have $q | Q_q$.
\end{definition}

We recall the concept of property $\mathcal O_N$ from \cite{KMPWW}, a variant of property $\mathcal O$ from \cite{IW}. 

\begin{definition}[Property $\mathcal O_N$]
\label{propO}
Fix $N \in \N_{\geq N_0}$ and define $K_N \coloneqq \lfloor \frac{5 \log N}{\log \log N} \rfloor$. We say that a~finite set $\Lambda \subset \Z_+$ has \emph{property $\mathcal O_N$} if $\Lambda \subseteq S_{1}\dotsm S_{K}$ for some $K \in [K_N]$, where 
\[
S_{1}\dotsm S_{K} \coloneqq \{s_{1}\dotsm s_{K} : s_1\in S_{1},\ldots, s_K \in S_{K}\}
\]
with $S_1, \dots, S_K$ being disjoint subsets of the set $\{ p^{\lfloor \log_p N \rfloor} : p \in \mathbb P \cap \N_{\leq N} \}$.
\end{definition}

Let us enumerate the set of all prime numbers $\mathbb P = \{p_m : m \in \Z_+ \}$ in such a way that $p_{m'} > p_{m}$ holds if $m' > m$. The following lemma explains the restriction $K \in [K_N]$. 

\begin{lemma} \label{L1}
    If $N \in \Z_+$ is sufficiently large, then $p_1 p_2 \cdots p_{K_N+1} > N$. 
\end{lemma}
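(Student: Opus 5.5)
We need $\log(p_1\cdots p_{K+1}) = \vartheta(p_{K+1}) > \log N$ for $K = K_N = \lfloor 5\log N/\log\log N\rfloor$. The plan is to bound the primorial from below by an elementary estimate and then compare logarithms. Write $M \coloneqq K_N+1 > 5\log N/\log\log N$.

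\emph{Step 1: a crude lower bound.} Since $p_m \geq m$ for all $m$ (indeed $p_m \geq m+1$), we get
\[
p_1\cdots p_M \geq M! \geq (M/e)^M .
\]
This already suffices, so we can avoid the prime number theorem and Chebyshev bounds entirely. Taking logarithms,
\[
\log(p_1\cdots p_M) \geq M(\log M - 1).
\]

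\emph{Step 2: compare with $\log N$.} Set $L \coloneqq \log N$ and $\ell \coloneqq \log\log N$. Since $M > 5L/\ell$ and $x\mapsto x(\log x-1)$ is increasing for $x\geq 1$, it is enough to show
\[
\frac{5L}{\ell}\Big(\log\frac{5L}{\ell} - 1\Big) > L,
\qquad\text{i.e.}\qquad
\log 5 + \ell - \log \ell - 1 > \frac{\ell}{5}.
\]
The left-hand side is $\ell(1-o(1))$ as $N\to\infty$, so the inequality holds once $N$ is large. Hence $p_1\cdots p_{K_N+1} > N$.

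The only point requiring care is handling the floor in $K_N$ together with monotonicity. Because $M$ exceeds $5L/\ell$ strictly, and $5L/\ell \geq 1$ for large $N$, the monotonicity argument applies. The constant $5$ leaves ample room, since even the constant $1+\delta$ would work. There is no genuine obstacle beyond checking this elementary inequality for $N \geq N_0$.
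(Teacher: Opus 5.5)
Your proof is correct, and it takes a more elementary route than the paper. The paper works through the primorial. It uses $\lim_{n\to\infty}\sqrt[n]{n\#}=e$, which is equivalent to the prime number theorem, together with the prime number theorem bound $\pi(n)\le 2n/\log n$. From these it concludes that the first $\lfloor 4n/\log n\rfloor$ primes have product exceeding $4^n$, then sets $n=\lfloor\log N\rfloor+1$ and checks that $K_N+1\ge\lfloor 4n/\log n\rfloor$. You use only $p_m\ge m$ and $M!\ge (M/e)^M$. This suffices because $\log M! = M\log M - O(M)$ has the same leading order as $\log(p_1\cdots p_M)=\vartheta(p_M)\sim M\log M$. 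So the trivial bound loses only a lower-order term, and no analytic number theory is needed. As you observe, your argument also shows that the constant $5$ in $K_N$ could be replaced by any $1+\delta$, for $N$ large depending on $\delta$. The paper's choice costs it nothing, since the prime number theorem is used elsewhere in that section anyway, for $|\mathbb P\cap\N_{\le N}|\le 2N/\log N$ and for $p_k\le 2K\log K$ in Lemma~\ref{L4}. Your version, however, is self-contained. Your handling of the floor, via $M>5L/\ell\ge 1$ and the monotonicity of $x\mapsto x(\log x-1)$ on $[1,\infty)$, is sound.
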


\begin{proof}
	Given $n \in \Z_+$, its primorial $n \#$ is the product of all $p \in \PP  \cap \N_{\leq n}$. We know that 
    \[
    \lim_{n \to \infty} \sqrt[n]{n \#} = e.
    \]
    Thus, if $n$ is sufficiently large, then $n \# > 2^n$ and, by the prime number theorem, $n \#$ has no more than $\lfloor \frac{2n}{\log n} \rfloor$ factors. Hence, the product of the first $\lfloor \frac{4n}{\log n} \rfloor$ prime numbers exceeds $4^n$. Now, assuming that $N$ is sufficiently large, we take $n \coloneqq \lfloor \log N \rfloor + 1$ and obtain
    \[
    4^n > e^{\log N} = N
    \quad \text{and} \quad
    K_N + 1 \geq 5 \log N / \log \log N \geq \lfloor 4n / \log n \rfloor.
    \]
    Therefore, we have $p_1 p_2 \cdots p_{K_N+1} \geq p_1 p_2 \cdots p_{\lfloor 4n / \log n \rfloor} > 4^n > N$, as desired. 
\end{proof}

From now on, we assume that $N_0\in\Z_+$ is so large that, in particular, Lemma~\ref{L1} and the upper bound $|\mathbb P \cap \N_{\leq N}| \leq \lfloor \frac{2 N}{\log N} \rfloor$ from the prime number theorem are applicable if $N \geq N_0$.

\subsection{Partitioning of the set of canonical fractions}
For $N\in \N_{\geq N_0}$, let $I_N \coloneqq \lfloor \frac{2 N}{\log N} \rfloor$. As observed before, for every $a/q \in \mathcal R^d_{\leq N}$, the corresponding composite $Q_q$ consists of at most $K_N$ factors of the form $p^{\lfloor \log_p N \rfloor}$, where each $p$ is one of the first $I_N$ prime numbers. This motivates the following counting lemma that originates in \cite{M1}.

\begin{lemma} \label{L2} There exists ${\bf C}_{\rm sur} \in \RR_+$ such that the following is true. For $N \in \N_{\geq N_0}$, set $J_N \coloneqq \lfloor {\bf C}_{\rm sur} 2^{{\bf C}_{\rm sur} \log N / \log \log N} \rfloor$ and fix $I \in [I_N]$ and $K \in [K_N]$ with $K \leq I$. Let $V$ be a set of size $I$. Then, for some $J \in [J_N]$, there exist surjective functions $ g_{1},\dotsc,g_{J} \colon V \to [K]$ such that, whenever $E \subseteq V$ is of size $K$, there exists $j \in [J]$ for which $g_{j}(E)$ is also of size $K$.
\end{lemma}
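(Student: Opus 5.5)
We prove Lemma~\ref{L2} with the probabilistic method: choose the functions independently and uniformly at random, show that a single function separates a fixed $K$-set with probability at least $e^{-K}$, and take a union bound over all $K$-subsets of $V$. Surjectivity is automatic. Indeed, if $E \subseteq V$ has size $K$ and $g_j(E)$ has size $K$, then $g_j(E) = [K]$, so $g_j$ is onto.

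\emph{Step 1: one fixed set.} Let $g \colon V \to [K]$ be chosen uniformly at random among all $K^{I}$ functions. For a fixed $E \subseteq V$ with $|E| = K$, the restriction $g|_E$ is uniform on $[K]^E$. Hence $g|_E$ is injective with probability $K!/K^K \geq e^{-K}$, where we used Stirling's bound $K! \geq (K/e)^K$.

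\emph{Step 2: union bound.} Take $g_1, \dots, g_J$ independent and uniform. For a fixed $E$, the probability that no $g_j$ is injective on $E$ is at most
\[
(1 - e^{-K})^J \leq \exp(-J e^{-K}).
\]
There are $\binom{I}{K} \leq I^K$ sets $E$ of size $K$. So the probability that some $E$ fails is at most $\exp(K \log I - J e^{-K})$. This is strictly less than $1$ as soon as $J > K e^{K} \log I$. Choose $J \coloneqq \lfloor K e^{K} \log I \rfloor + 1$. Then with positive probability every $E$ is separated by some $g_j$, so such a family exists. If one insists on surjectivity for every index $j$, replace any non-surjective $g_j$ by a fixed surjection (which exists since $K \leq I$); this only adds functions and does not spoil the separation property.

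\emph{Step 3: checking $J \leq J_N$.} This is the only place where the specific quantities enter. From $K \leq K_N \leq 5 \log N / \log \log N$ we get
\[
e^{K} \leq 2^{(5/\log 2) \log N / \log \log N}.
\]
Since $I \leq I_N \leq 2N$, we have $\log I \lesssim \log N$, and also $K \lesssim \log N$. It remains to absorb the polynomial factor $(\log N)^2$ into the exponential. Write
\[
(\log N)^2 = 2^{2 \log_2 \log N},
\]
and note that $\log \log N \leq \log N / \log \log N$ for $N \geq N_0$. Therefore
\[
J \leq C (\log N)^2\, 2^{(5/\log 2) \log N / \log \log N} \leq {\bf C}_{\rm sur}\, 2^{{\bf C}_{\rm sur} \log N / \log \log N}
\]
for a suitable absolute constant ${\bf C}_{\rm sur}$, which gives $J \in [J_N]$.

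The main obstacle is purely bookkeeping in Step 3. The factors $\log I$ and $K$ are only polylogarithmic in $N$, while the allowance $2^{{\bf C}_{\rm sur} \log N / \log \log N}$ grows faster than any power of $\log N$, so the absorption works.
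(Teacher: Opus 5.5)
Your proposal is correct and is essentially the paper's argument. The paper double-counts $J$-tuples of functions $V\to[K]$, which is the counting form of your random choice plus union bound. It uses exactly your bounds $K!/K^K\ge e^{-K}$ and $\binom{I}{K}\le I^K$ to reach $e^{K\log I-Je^{-K}}<1$, and it also notes that surjectivity can be ignored. The only difference is cosmetic: the paper takes $J=J_N$ directly and leaves $J_N>Ke^K\log I$ to ``${\bf C}_{\rm sur}$ sufficiently large,'' a check your Step~3 makes explicit.
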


\begin{proof}
Observe that, denoting $J \coloneqq J_N$, it suffices to find arbitrary (not necessarily distinct and not necessarily surjective) functions $g_{1},\dotsc,g_{J} \colon V \to [K]$ with the desired property.

Set $\binom{V}{K} \coloneqq \{E \subseteq V : |E| = K\}$ and let $K^{V}$ consist of all functions $g \colon V\to [K]$. Then
\[
| \{g\in K^{V} : |g(E)|=K \} | = \alpha_K |K^{V}|
= \alpha_K K^I
\]
holds for each $E \in\binom{V}{K}$ with $\alpha_K \coloneqq K! / K^K$. Suppose for a contradiction that 
\[
\big \{(g_1,\dotsc, g_J)\in (K^{V})^J : \text{for all
} E \in\tbinom{V}{K} \text{ there is } j \in [J] \text{ such that
} |g_j(E)|=K \big \}=\emptyset,
\]
that is, for each $J$-tuple, there is $E \in \binom{V}{K}$ such that $|g_j(E)| < K$ for all $j \in [J]$. Then
\[
K^{I J} \leq \sum_{E \in\binom{V}{K}} \big| \big\{ (g_1,\dotsc, g_J) \in (K^{V})^J : |g_j(E)| < K 
\text{ for all } j \in [J] \big\} \big| 
\leq \big| \tbinom{V}{K} \big| (1 - \alpha_K)^J K^{I J}
\]
and so $1 \leq |\binom{V}{K}| ( 1 - \alpha_K )^J$. Note that $I \leq I_N = \lfloor \frac{2 N}{\log N} \rfloor$ and $K \leq K_N = \lfloor \frac{5 \log N}{\log \log N} \rfloor$. Hence,
\[
1 \leq \big| \tbinom{V}{K} \big| ( 1 - \alpha_K )^J < I^K e^{-\alpha_K J} = e^{K \log I - J K! / K^K} \leq e^{K \log I - J e^{-K}} < 1
\]
holds by $J = J_N = \lfloor {\bf C}_{\rm sur} 2^{{\bf C}_{\rm sur} \log N / \log \log N} \rfloor$, provided that ${\bf C}_{\rm sur}$ is sufficiently large.
\end{proof}

Proceeding in two steps, we partition $\mathcal R^d_{\le N}$ into pieces, each having property $\mathcal O_N$.
 
\medskip \paragraph{\bf Step~1}
By Lemma~\ref{L1}, we partition $\mathcal R^d_{\le N}$ into disjoint subsets $ \mathcal R^d_{N,0} \coloneqq \mathcal R(1)$ and $\mathcal R^d_{N,K}$ for  $K \in [K_N]$, consisting of all fractions $a/q \in \mathcal R^d_{\le N}$ whose denominators $q$ have exactly $K$ prime divisors. The multiplier operator corresponding to $ \mathcal R^d_{N,0}$ can be readily handled by using Theorem~\ref{prop:msw}. Hence, from now on, we focus only on $\mathcal R^d_{N,K}$ with $K \in [K_N]$. 

\medskip \paragraph{\bf Step~2} Fix $K \in [K_N]$. We apply Lemma~\ref{L2} with 
\[
V \coloneqq \{ p^{\lfloor \log_p N \rfloor} : p \in \mathbb P \cap \N_{\leq N} \} = \{ Q_{p} : p \in \mathbb P \cap \N_{\leq N} \},
\]
which produces surjective functions $g_1,\ldots, g_J \colon V\to [K]$  with $J \in [J_N]$. For every $j \in[J]$ and $k \in [K]$, we define $S_k^j \coloneqq g_j^{-1}(\{k\}) = \{ v \in V : g_j(v) = k \}$. Next, we iteratively define
\begin{align*}
 \mathcal O(g_1) & \coloneqq S_1^1\cdots S_K^1, \\
 \mathcal O(g_2) & \coloneqq S_1^2\cdots S_K^2 \setminus \mathcal O(g_1), \\
 & \ \ \vdots \\
 \mathcal O(g_J) & \coloneqq S_1^J\cdots S_K^J \setminus \big(\mathcal O(g_1) \cup \dots \cup \mathcal O(g_{J-1})\big).
\end{align*}
Finally, for each $j \in [J]$, we set
\begin{align}
\label{IWeq:8}
\mathcal R^d_{N,K,j} \coloneqq \big\{ a / q \in \mathcal R^d_{N,K}: Q_q\in \mathcal O(g_j) \big\}.
\end{align}
The sets $\mathcal R^d_{N,K,j}$ are disjoint and the corresponding sets $\mathcal O(g_j)$ have property $\mathcal O_N$. Moreover, $\mathcal R^d_{N,K}= \mathcal R^d_{N,K,1} \cup \dots \cup \mathcal R^d_{N,K,J}$. Indeed, if $a/q \in \mathcal R^d_{N,K}$, then $q$ has exactly $K$ prime divisors, say $p_1, \ldots, p_K\in\mathbb  P$, and $Q_q=Q_{p_1}\cdots Q_{p_K}$. By Lemma~\ref{L2}, there exists $j\in[J]$ such that $|g_j(\{Q_{p_1}, \dots, Q_{p_K}\})|=K$. Taking the smallest index $j$ with this property,  we obtain $Q_q=Q_{p_1}\cdots Q_{p_K}\in \mathcal O(g_j)$ and consequently $a/q \in \mathcal R^d_{N,K, j}$, as desired.

The product structure in $S_1^j\cdots S_K^j$ is crucial for employing denominator and numerator orthogonality arguments. An alternative way to reveal such a product structure is to apply the random partition technique appearing in the proof of \cite[Theorem~2.3]{TaoIW}. 

To complete the proof, it suffices to prove inequality \eqref{IW UPPER} with $p=2r$ and the multiplier $\llbracket G \mid \mathfrak m\rrbracket_{\mathcal R^d_{\le N}}$ replaced by $\llbracket G \mid \mathfrak m\rrbracket_{\mathcal R^d_{N,K,j}} $ for every $K \in [K_N]$ and $j\in [J]$, with $\mathcal R^d_{N,K,j}$ defined in \eqref{IWeq:8}. For this purpose, additional notation and terminology need to be introduced.

\subsection{\textbf{\texorpdfstring{$\mathbb P$}{TEXT}-irreducibility and factorization}}

Let us first recall the concept of \emph{$\mathbb P$-irreducible fractions} from \cite{KMPWW}, which played a critical role in the verification of condition \eqref{IWeq:6}. 

\begin{definition}[Quotients, reduced fractions, and $\mathbb P$-irreducible fractions]
Fix $q\in \Z_+$. Identifying $a/q$ with $a/q + \Z^d$ for each $a \in \N_{<q}^d$, we define the following subsets of $(\Q/\Z)^d$.
\begin{enumerate}[label*={\arabic*}.]
	\setlength\itemsep{0.25em}
\item The set of \emph{quotients} or \emph{$q$-quotients} is defined by $\mathcal Q(q) \coloneqq {q}^{-1} \N_{<q}^d$.
\item The set of \emph{reduced fractions} is defined by $\mathcal R(1) \coloneqq \mathcal Q(1)$ or, when $q>1$, by 
\[
{\mathcal R}(q) \coloneqq \{ a/q\in \mathcal Q(q) : {\rm gcd}(a, q) = 1 \}.
\]
\item The set of \emph{$\mathbb P$-irreducible fractions} is defined by $\mathcal I(1) \coloneqq \mathcal Q(1)$ or, when $q>1$, by
\[
{\mathcal I}(q) \coloneqq \{ a/q \in \mathcal Q(q) : q/ {\rm gcd}(a, q) \text{ has exactly the same prime divisors as } q \}.
\]
\end{enumerate}
\end{definition}

We note that $\mathcal I(q)\supseteq \mathcal R(q)$. Moreover, $\mathcal I(q)$ can be expressed as a disjoint union of $\mathcal R(\tilde q)$, where $\tilde q$ runs over all positive divisors of $q$ having exactly the same prime divisors as $q$.

\begin{example}
For $d=1$ and $q=12$, identifying $a/q$ with $a/q + \Z$, we obtain
\begin{align*}
\quad 
\mathcal R(12)  = \bigg \{ \frac{1}{12},
\frac{5}{12}, \frac{7}{12}, \frac{11}{12} \bigg \} \quad {\rm and} \quad
\mathcal I(12)  = \bigg \{ \frac{1}{12},
\frac{2}{12}, \frac{5}{12}, \frac{7}{12}, \frac{10}{12}, \frac{11}{12} \bigg \}.
\end{align*}
\end{example}

For a finite set $\Lambda \subset \Z_+$, define 
\[
\mathcal R(\Lambda) \coloneqq \bigcup_{q \in \Lambda} \mathcal R(q)
\quad \text{and}
\quad 
\mathcal I(\Lambda) \coloneqq \bigcup_{q \in \Lambda} \mathcal I(q).
\]
Obviously, $\mathcal R(\Lambda)\subseteq \mathcal I(\Lambda)$ and the set of canonical fractions \eqref{IWeq:372} can be written as $\mathcal R([N])$ using this definition. An important feature of $\mathcal R(\Lambda)$, which was used in \cite{IW} and \cite{MSZ3}, is the following factorization property. If ${\rm gcd}(q, q') = 1$ for every pair $(q,q') \in \Lambda \times \Lambda'$, then
\[
\mathcal R(\Lambda \Lambda') = \mathcal R(\Lambda) \oplus \mathcal R(\Lambda').
\]
The direct sum $\oplus$ indicates that each element $u$ in the sumset ${\mathcal R}(\Lambda) + {\mathcal R}(\Lambda')$ has a unique representation $u = a/q + a'/q' \pmod{\Z^d}$. This follows from the Chinese remainder theorem. 

The following definition allows us to formulate the factorization property for $\mathcal I(\Lambda)$.

\begin{definition}[$\mathbb P$-separated sets]
A set of positive integers is \emph{$\mathbb P$-separated} if, for any two distinct elements, there exists $p \in \PP$ that divides exactly one of them.
\end{definition}

\begin{lemma}
\label{lem:fact}
Suppose that $\Lambda, \Lambda'\subseteq \Z_+$ are two $\mathbb P$-separated sets. If ${\rm gcd}(q, q') = 1$ for every pair $(q,q') \in \Lambda \times \Lambda'$, then the map $\Phi \colon \mathcal I(\Lambda) \times \mathcal I(\Lambda')\to \mathcal I(\Lambda \Lambda')$ given by
\[
\mathcal I(\Lambda) \times \mathcal I(\Lambda') \ni
(a/q, a'/q') \mapsto a/q + a'/q' \in
\mathcal I(\Lambda \Lambda')
\]
is a bijection. In particular, we have
\[
\mathcal I(\Lambda \Lambda') = \mathcal I(\Lambda) \oplus \mathcal I(\Lambda').
\] 
\end{lemma}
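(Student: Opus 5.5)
Our plan is to verify three things in turn: $\Phi$ is well defined (its image lies in $\mathcal I(\Lambda\Lambda')$), it is injective, and it is surjective. Throughout, fix $a/q\in\mathcal I(q)$ with $q\in\Lambda$ and $a'/q'\in\mathcal I(q')$ with $q'\in\Lambda'$. Since ${\rm gcd}(q,q')=1$, we have $a/q+a'/q'=(aq'+a'q)/(qq')$. The key arithmetic observation is the following. For a prime $p\mid q$, the $p$-adic valuation of the reduced denominator of $a/q+a'/q'$ equals that of $a/q$ alone, because $a'/q'$ is a $p$-adic integer. Symmetrically, for $p\mid q'$ it equals that of $a'/q'$. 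Hence the reduced denominator of the sum is the product of the reduced denominators $q/{\rm gcd}(a,q)$ and $q'/{\rm gcd}(a',q')$, which are coprime. Because these have exactly the prime divisors of $q$ and $q'$ respectively, the reduced denominator of the sum has exactly the prime divisors of $qq'$. This gives $a/q+a'/q'\in\mathcal I(qq')\subseteq\mathcal I(\Lambda\Lambda')$. For $d>1$ we argue coordinatewise in terms of ${\rm gcd}(a,q)$ with $a\in\Z^d$; the same valuation argument works with $v_p$ of the vector replaced by the minimum over coordinates.

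For surjectivity, take $b/Q\in\mathcal I(Q)$ with $Q=qq'$, where $q\in\Lambda$ and $q'\in\Lambda'$. Write $b/Q$ in lowest terms as $c/(uu')$ with $u\mid q$ and $u'\mid q'$, where $u$ has the same primes as $q$ and $u'$ the same primes as $q'$. This is possible because the prime set of the reduced denominator is that of $qq'$, and $q,q'$ are coprime. By the Chinese remainder theorem (the $\mathcal R(\Lambda\Lambda')=\mathcal R(\Lambda)\oplus\mathcal R(\Lambda')$ factorization recalled above) we obtain $c/(uu')\equiv c_1/u+c_2/u' \pmod{\Z^d}$ with reduced fractions. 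Then $c_1/u\in\mathcal R(u)\subseteq\mathcal I(q)$, since $u\mid q$ has the same prime divisors; similarly $c_2/u'\in\mathcal I(q')$. This yields a preimage.

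Injectivity is where $\mathbb P$-separation is used, and we expect it to be the main subtle step. Suppose $a_1/q_1+a_1'/q_1'\equiv a_2/q_2+a_2'/q_2'$ with $q_i\in\Lambda$ and $q_i'\in\Lambda'$. By the first paragraph, the reduced denominator $D$ of the common value has $p$-part determined by the $\Lambda$-component for primes dividing $q_i$. Its prime support equals both ${\rm supp}(q_1q_1')$ and ${\rm supp}(q_2q_2')$. Since every element of $\Lambda$ is coprime to every element of $\Lambda'$, the primes of $D$ coming from $\Lambda$-elements are separated from those coming from $\Lambda'$-elements. Moreover, ${\rm supp}(q_1)\cup{\rm supp}(q_1')={\rm supp}(q_2)\cup{\rm supp}(q_2')$, and ${\rm supp}(q_1)$ is disjoint from ${\rm supp}(q_2')$ because $q_1\in\Lambda$ and $q_2'\in\Lambda'$ are coprime. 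Together these force ${\rm supp}(q_1)={\rm supp}(q_2)$. $\mathbb P$-separation of $\Lambda$ then gives $q_1=q_2$, and likewise $q_1'=q_2'$. Now $a_1/q+a_1'/q'\equiv a_2/q+a_2'/q'$ with coprime $q,q'$. Multiplying by $q'$ reduces this to $q'(a_1-a_2)/q\in\Z^d$, hence $a_1\equiv a_2 \pmod q$; symmetrically $a_1'\equiv a_2'\pmod{q'}$. Hence $\Phi$ is injective, and the direct-sum statement follows.
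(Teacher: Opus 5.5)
Your proposal is correct and follows essentially the same three-step route as the paper: well-definedness because the reduced denominator of $a/q+a'/q'$ has exactly the primes of $qq'$; surjectivity by splitting the reduced denominator into coprime parts and applying the Chinese remainder theorem; and injectivity via $\mathbb P$-separation followed by the congruence argument modulo $q$ and $q'$. The differences are cosmetic: you justify well-definedness with $p$-adic valuations, which the paper simply asserts, and for injectivity you compare prime supports rather than the full reduced denominators $\tilde q_1=\tilde q_2$, which is all that $\mathbb P$-separation needs.
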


\begin{proof}
We first note that the denominator of $a/q + a'/q'=(q'a+qa')/(qq')$ in the reduced form has the same prime divisors as $qq'\in \Lambda \Lambda'$ due to the fact that ${\rm gcd}(q, q') = 1$. Hence, the map $\Phi$ indeed takes values in the set $\mathcal I(\Lambda \Lambda')$. 

Next, we show that $\Phi$ is surjective. Fix $b/u \in \mathcal I(\Lambda \Lambda')$, where $u=qq'$ for some $(q,q') \in \Lambda \times \Lambda'$. By definition, $b/u= \tilde b/\tilde u$, where ${\rm gcd}(\tilde b, \tilde u)=1$ and $\{u,\tilde u\}$ is not $\PP$-separated. By ${\rm gcd}(q, q') = 1$, we may write $\tilde u=\tilde q\tilde q'$, where both $\{q,\tilde q\}, \{q', \tilde q'\}$ are not $\PP$-separated. By the Chinese remainder theorem, we have $\tilde b / \tilde u = \tilde a/\tilde q + \tilde a'/\tilde q'$, where ${\rm gcd}({\tilde a}, {\tilde q}) = {\rm gcd}({\tilde a'}, {\tilde q'}) = 1$. Writing $\tilde a/\tilde q = a/q \in \mathcal I(\Lambda)$ and $\tilde a'/\tilde q' = a'/q' \in \mathcal I(\Lambda')$, we obtain $b/u = \Phi(a/q, a'/q')$.

Finally, we show that $\Phi$ is injective. Suppose that 
\[
a_1/q_1 + a'_1/q'_1 \equiv a_2/q_2 + a'_2/q'_2 \pmod{\Z^d}
\]
with $q_1,q_2 \in \Lambda$ and $q'_1,q'_2 \in \Lambda'$. Then ${\rm gcd}(q_1, q'_1) ={\rm gcd}(q_2, q'_2)= 1$. Rewriting the above as
\[
\tilde a_1/ \tilde q_1 + \tilde a'_1/\tilde q'_1 \equiv \tilde a_2/ \tilde q_2 + \tilde a'_2/ \tilde q'_2 \pmod{\Z^d}
\]
with ${\rm gcd}(\tilde a_1, \tilde q_1) ={\rm gcd}(\tilde a'_1, \tilde q'_1)={\rm gcd}(\tilde a_2, \tilde q_2) ={\rm gcd}(\tilde a'_2, \tilde q'_2)=1$, we deduce that $\tilde q_1 = \tilde q_2$ and $\tilde q'_1=\tilde q'_2$. This implies $q_1=q_2$ and $q'_1=q'_2$, since both $\Lambda, \Lambda'$ are $\mathbb P$-separated. Therefore, $a_1 - a_2 \in q_1 \Z^d$ and $a'_1 - a'_2 \in q'_1 \Z^d$, which means that $\Phi$ is injective, as desired. 
\end{proof}

	Fix $K \in [K_N]$ and $j\in [J]$. Referring to \eqref{IWeq:8}, we observe that
	\begin{equation*}
	\llbracket G \mid \mathfrak m\rrbracket_{\mathcal R^d_{N,K,j}}
	= 
	\llbracket G^\circ \mid \mathfrak m\rrbracket_{\mathcal I(S_1 \cdots S_K)}
	\end{equation*}
	with $S_k \coloneqq g_j[\{k\}]^{-1}$ for all $k \in [K]$ and $G^\circ \coloneqq G \cdot \ind{N,K,j}$, where $\ind{N,K,j}(a/q) \coloneqq 1$ if $a/q$ is $\mathbb P$-irreducible and upon reduction belongs to $\mathcal R^d_{N,K,j}$, and $\ind{N,K,j}(a/q) \coloneqq 0$ otherwise. The sets $S_k$ form a partition of $V$ determined by the surjection $g_j$. Moreover, the ${\mathbb P}$-irreducible fractions ${\mathcal I}(q)$ are pairwise disjoint sets as $q$ varies over the product set $S_1 \cdots S_K$. 
	
	\subsection{Denominators} We use Lemma~\ref{lem:fact} repeatedly to exploit orthogonality between the denominators. Denote $f^\circ_{\mathcal I} \coloneqq T_{\Z^d} \big[\llbracket G^\circ \mid \mathfrak m\rrbracket_{\mathcal I}\big]f$ for every $\mathcal I \subseteq \mathcal I(S_1 \cdots S_K)$. We have
	\begin{align} \label{denom-0}
	\big\| T_{\Z^d} \big[ 	\llbracket G \mid \mathfrak m\rrbracket_{\mathcal R^d_{N,K,j}} \big] f \big\|_{\ell^{2r}(\Z^d; H)}^{2r}
	= \sum_{x \in \Z^d} 
	  \big\| f^\circ_{\mathcal I(S_1 \cdots S_K)} (x) \big\|_{H}^{2r}.
	\end{align}
	
	For $L \subseteq [K]$, set ${\mathcal S}_L \coloneqq \prod_{l \in L} S_l$ with the convention ${\mathcal S}_{\emptyset} \coloneqq \{1\}$. Moreover, for $q \in {\mathcal S}_L$, we let $F_{q, L} \coloneqq f^\circ_{\mathcal I(q) \oplus \mathcal I({\mathcal S}_{[K] \setminus L})}$. Now, for each $k \in [K]$, we shall show the following inequality
	\begin{align} \label{denom}
	\sum_{x \in \Z^d} \Big(
	\sum_{q \in {\mathcal S}_{[k-1]}} 
	\big \| F_{q, [k-1]} (x) \big \|_{H}^{2} \Big)^r
	\leq C_r
	\sum_{x \in \Z^d} \Big(
	\sum_{q \in {\mathcal S}_{[k]}} 
	\big \| F_{q, [k]}(x) \big \|_{H}^{2} \Big)^r
	\end{align}
	with $C_r = {\bf C}_{\rm UP}(r)$ using Proposition~\ref{UP}. After $K$ iterations of \eqref{denom}, we shall obtain 
	\begin{align*}
	\sum_{x \in \Z^d} \Big( 
	\sum_{q \in {\mathcal S}_{\emptyset}}
	\big \| F_{q,\emptyset}(x) \big \|_{H}^{2} \Big)^r
	\leq C_r^{K}
	\sum_{x \in \Z^d} \Big( 
	\sum_{q \in {\mathcal S}_{[K]}}
	\big \| F_{q, [K]}(x) \big \|_{H}^{2} \Big)^r
	\end{align*}
	or, equivalently, 
	\begin{align} \label{denom-II}
	\sum_{x \in \Z^d} 
	\big\| f^\circ_{\mathcal I(S_1 \cdots S_K)} (x) \big\|_{H}^{2r}
	\le C_r^{K} \sum_{x \in \Z^d} 
	\Big(\sum_{q \in S_{1} \cdots S_{K}}
	\big\| f^\circ_{\mathcal I(q)} (x) \big\|_{H}^{2}\Big)^r.
	\end{align}
	The constant $C_r^{K/2r}$ is acceptable because $K \leq K_N = \lfloor \frac{5 \log N}{\log \log N} \rfloor$.

Turning to the proof of \eqref{denom}, we rewrite the left-hand side as
\[
\sum_{q_1, \dots, q_r \in {\mathcal S}_{[k-1]}}
\sum_{x \in \Z^d} 
\prod_{i \in [r]}
\Big \|
\sum_{q^* \in S_{k}} 
F_{q_i q^*, [k]}(x) \Big\|_{H}^{2}.
\] 
Fix $q_1, \dots, q_r \in {\mathcal S}_{[k-1]}$. We claim that, for every $2r$-tuple $(q^*_{1,1}, q^*_{1,2}, \dots, q^*_{r,1}, q^*_{r,2}) \in S_{k}^{2r}$ with the uniqueness property, the Fourier transform of the function
\[
\prod_{i \in [r]} \langle F_{q_i q^*_{i,1}, [k]}, F_{q_i q^*_{i,2}, [k]} \rangle_{H}
\] 
vanishes in a neighborhood of the origin so that \eqref{IWeq:6} holds. Indeed, since $H$ is separable, the Fourier transform of this function is a finite sum of functions, each supported on a set
\[
2r \varepsilon {\bf Q}^d + (v_{1,1} - v_{1,2}) + \cdots + (v_{r,1} - v_{r,2}),
\]
where $v_{i,1}, v_{i,2} \in \mathcal I(S_{[K]})$ are fractions with denominators  divisible by $q^*_{i,1}, q^*_{i,2}$ respectively for each $i\in[r]$.  Moreover,  these denominators, in reduced form, are not greater than $N$. We rewrite the above set as $2r \varepsilon {\bf Q}^d + v$ and observe that $v \notin \Z^d$ because, due to the uniqueness property, property $\mathcal O_N$, and $\mathbb P$-irreducibility, at least one of the factors $q^*_{1,1}, q^*_{1,2}, \dots, q^*_{r,1}, q^*_{r,2}$ cannot be completely eliminated from the denominator of $v$. Thus, for some $i \in [d]$, the distance between $\Z$ and the $i$-th coordinate of $v$ is at least $N^{-2r}$. By $\varepsilon \leq (4r N^{2r})^{-1}$, this implies that $\Z^d \cap (2r \varepsilon {\bf Q}^d +v) = \emptyset$. Since condition \eqref{IWeq:6} is satisfied, an application of Proposition~\ref{UP} completes the proof of inequality \eqref{denom}.

\subsection{Numerators}
Next, we exploit orthogonality between the numerators to bound the right-hand side of \eqref{denom-II}. As before, we use Lemma~\ref{lem:fact} repeatedly.  Our strategy is to split the sums in question into diagonal and off-diagonal terms by
using the following result.
\begin{lemma}
\label{L3}
Fix $r \in \Z_+$. There exists ${\bf C}_{\rm diag}(r) \in \RR_+$ such that, for every $N \in \Z_+$ and for all positive numbers $a_1, \dotsc, a_N \in \RR_+$, the following numerical inequality holds
\begin{equation*}
(a_1+\dotsb+a_N)^r
\leq {\bf C}_{\rm diag}(r) \, \Big( \sum_{n \in [N]} a_n^r+
\sum_{\substack{n_1,\dotsc,n_r \in [N]\\{\rm distinct}}} a_{n_1}\dotsm a_{n_r} \Big).
\end{equation*}
\end{lemma}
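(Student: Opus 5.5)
We expand the left-hand side and sort the terms by the pattern of coincidences among the indices. Writing
\[
(a_1+\dotsb+a_N)^r=\sum_{n_1,\dots,n_r\in[N]} a_{n_1}\dotsm a_{n_r},
\]
we group the $r$-tuples $(n_1,\dots,n_r)$ according to the partition $\pi$ of $[r]$ induced by the equivalence relation $i\sim j \iff n_i=n_j$. There are finitely many such partitions (the Bell number $B_r$ of them), so the whole problem reduces to showing that, for each fixed partition $\pi$ of $[r]$, the contribution
\[
\Sigma_\pi\coloneqq\sum_{\substack{m_1,\dots,m_s\in[N]\\ \text{distinct}}} a_{m_1}^{k_1}\dotsm a_{m_s}^{k_s},
\]
where $\pi$ has blocks of sizes $k_1,\dots,k_s\ge 1$ with $k_1+\dots+k_s=r$, is bounded by a constant depending only on $r$ times the right-hand side.

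The two extreme partitions are exactly the two terms on the right. When $s=1$ we have $\Sigma_\pi=\sum_n a_n^r$, and when $s=r$ we have $\Sigma_\pi=\sum_{\text{distinct}} a_{n_1}\dotsm a_{n_r}$. For an intermediate partition, the key tool is the weighted AM--GM inequality applied termwise: since $\sum_l k_l/r=1$,
\[
a_{m_1}^{k_1}\dotsm a_{m_s}^{k_s}\le \sum_{l=1}^s \frac{k_l}{r}\, a_{m_l}^{r}.
\]
Summing this over distinct $(m_1,\dots,m_s)$ is not good enough, however, since each fixed $m_l$ comes with roughly $N^{s-1}$ choices of the remaining indices, and this loses a power of $N$. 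The plan instead is a homogeneity/case argument: set $S\coloneqq\sum_n a_n$ and $P\coloneqq\sum_n a_n^r$, and compare $S^r$ with $P$ and with the distinct sum $D$ directly.

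The cleanest route is to show $S^r\lesssim_r P+D$ by induction on $r$, as follows. Write $S^r=D+R$, where $R$ is the sum over non-injective tuples. Every non-injective tuple has some pair $i<j$ with $n_i=n_j$, so
\[
R\le \binom r2\sum_n a_n^2\, S^{r-2}.
\]
By H\"older's inequality, $\sum_n a_n^2\le P^{1/(r-1)}S^{(r-2)/(r-1)}$. Hence $R\le \binom r2 P^{1/(r-1)} S^{\,r-2+(r-2)/(r-1)}$, and Young's inequality with exponents $r-1$ and $(r-1)/(r-2)$ gives
\[
R\le C_r P+\tfrac12 S^r .
\]
Note that $\bigl(r-2+\tfrac{r-2}{r-1}\bigr)\tfrac{r-1}{r-2}=r$, so the exponent works out exactly. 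Absorbing $\tfrac12 S^r$ into the left-hand side yields $S^r\le 2D+2C_rP$. The case $r=2$ is handled directly: $S^2=P+D$. With this argument no induction is needed after all, and the partition bookkeeping serves only as motivation.

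The step that needs the most care is the absorption: it is legitimate because all quantities are finite and positive, and the constant in Young's inequality depends only on $r$. Checking the H\"older interpolation $\|a\|_2^2\le\|a\|_r^{r/(r-1)}\|a\|_1^{(r-2)/(r-1)}$ is routine log-convexity of $\ell^p$ norms: the exponents satisfy $\tfrac12=\tfrac{\theta}{r}+\tfrac{1-\theta}{1}$ with $\theta=\tfrac{r}{2(r-1)}$.
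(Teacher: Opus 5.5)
Your proof is correct. The paper gives no argument of its own for this lemma; it cites \cite[Lemma~2.35]{MSZ3} with ${\bf C}_{\rm diag}(r)=\max\{(r(r-1))^{r-1},2\}$.

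Your final argument has four steps: split into injective and non-injective tuples, bound $R\le\binom r2 S^{r-2}\sum_n a_n^2$ by a union bound over coinciding pairs, use H\"older's interpolation $\sum_n a_n^2\le P^{1/(r-1)}S^{(r-2)/(r-1)}$, and absorb via Young's inequality. All of it checks out, including the exponent bookkeeping and the legitimacy of the absorption.

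The quoted constant comes out of exactly your ingredients if you replace Young's inequality by a dichotomy:
\begin{itemize}
\item If $\sum_n a_n^2\le S^2/(r(r-1))$, then $R\le S^r/2$, hence $S^r\le 2D$.
\item Otherwise your H\"older bound gives $S^2<r(r-1)P^{1/(r-1)}S^{(r-2)/(r-1)}$, i.e.\ $S^{r/(r-1)}<r(r-1)P^{1/(r-1)}$, i.e.\ $S^r<(r(r-1))^{r-1}P$.
\end{itemize}
The dichotomy yields an explicit constant without tracking the constant in Young's inequality. It also needs no separate treatment of $r=2$, since the Young exponent $(r-1)/(r-2)$ never appears. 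Your absorption version is otherwise the same argument.

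Some presentation fixes:
\begin{itemize}
\item Delete the opening paragraph on partitions and weighted AM--GM, which you abandon.
\item Drop the remark ``by induction on $r$'', which you then retract.
\item Add the trivial case $r=1$. Your H\"older and Young exponents need $r\ge3$ and you treat $r=2$ separately, but $r=1$ is not covered. There every $1$-tuple is injective, so $R=0$ and $S=P=D$.
\end{itemize}
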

\begin{proof}
This was proved in \cite[Lemma~2.35]{MSZ3} with ${\bf C}_{\rm diag}(r) \coloneqq \max\{(r(r-1))^{r-1},2\}$.
\end{proof}

For any partition $L \cup L' \cup L'' = [K]$, we define
\[
{\mathcal F}_{L,L',L''} \coloneqq \sum_{q \in {\mathcal S}_L} \Big( \sum_{q' \in {\mathcal S}_{L'}} \sum_{q'' \in {\mathcal S}_{L''}} \sum_{u'' \in \mathcal I(q'')} \big \| 
f^\circ_{\mathcal I(q) \oplus \mathcal I(q')+u''}
\big \|_{H}^{2} \Big)^r.
\]
Then the sum on the right-hand side of \eqref{denom-II} can be rewritten as follows
\begin{align}\label{denom-III}
\sum_{x \in \Z^d} 
\Big(\sum_{q \in S_{1} \cdots S_{K}}
\big\| f^\circ_{\mathcal I(q)} (x) \big\|_{H}^{2}\Bigr)^r
= \| {\mathcal F}_{\emptyset, [K], \emptyset} \|_{\ell^1(\Z^d)}.
\end{align}
Assume that $l \in L'$ and set $L_+ \coloneqq  L \cup \{l\}, \, L'_- \coloneqq L' \setminus \{l\}, \, L''_+ \coloneqq L'' \cup \{l\}$. We shall prove
\begin{align} \label{numer}
\| {\mathcal F}_{L, L',L''} \|_{\ell^1(\Z^d)} \leq
C_r \big( \| {\mathcal F}_{L_+, L'_-,L''} \|_{\ell^1(\Z^d)} + \| {\mathcal F}_{L, L'_-,L''_+}\|_{\ell^1(\Z^d)} \big)
\end{align}
with $C_r = {\bf C}_{\rm diag}(r)(1+{\bf C}_{\rm UP}(r))$ using Lemma~\ref{L3} and Proposition~\ref{UP}. This gives
\begin{align}
\label{IWeq:9}
\| {\mathcal F}_{\emptyset, [K], \emptyset} \|_{\ell^1(\Z^d)} 
\leq C_r^K
\sum_{L \subseteq [K]}
\| {\mathcal F}_{L,\emptyset,[K] \setminus L} \|_{\ell^1(\Z^d)}
\end{align}
after $K$ iterations of \eqref{numer}. The implied constant in \eqref{IWeq:9} is acceptable because $K \leq K_N$.

Regarding \eqref{numer}, by Lemma~\ref{lem:fact} we can expand $\| {\mathcal F}_{L, L',L''} \|_{\ell^1(\Z^d)}$ as follows
\[
\sum_{x \in \Z^d} \sum_{q \in {\mathcal S}_{L}} 
\Big( 
\sum_{q^* \in S_l}
\sum_{q' \in {\mathcal S}_{L'_-}}  
\sum_{q'' \in {\mathcal S}_{L''}} 
\sum_{u'' \in\mathcal I(q'')} \big \|
f^\circ_{\mathcal I(q) \oplus \mathcal I(q^*) \oplus \mathcal I(q') +u''}(x) \big\|_{H}^{2} \Big)^r.
\]
Applying Lemma~\ref{L3} to the sum over $S_l$, we see that $\| {\mathcal F}_{L, L',L''} \|_{\ell^1(\Z^d)}$ is controlled by the diagonal sum, which is equal to $\| {\mathcal F}_{L_+, L'_-,L''} \|_{\ell^1(\Z^d)}$, and the off-diagonal sum corresponding to distinct $q^*_1,\ldots, q^*_r \in S_l$. To control the latter by $\| {\mathcal F}_{L, L'_-,L''_+}\|_{\ell^1(\Z^d)}$, we expand the $r$-fold product and, for fixed $q \in {\mathcal S}_L, \, q'_i \in {\mathcal S}_{L'_-}, \, q''_i \in {\mathcal S}_{L''}, \, u''_i \in \mathcal I(q_i'')$ and distinct $q^*_i \in S_l$, we set $F_{u^*_i} \coloneqq f^\circ_{\mathcal I(q) \oplus \mathcal I(q_i') + u^*_i +u_i''}$, and we claim that Proposition~\ref{UP} implies
\begin{align} \label{numer2}
\sum_{x \in \Z^d}
\prod_{i \in [r]} 
\Big \|
\sum_{u^*_i \in\mathcal I(q^*_i)}
F_{u^*_i}(x) \Big\|_{H}^{2}
\leq {\bf C}_{\rm UP}(r)
\sum_{x \in \Z^d}
\prod_{i \in [r]}
\sum_{u^*_i \in\mathcal I(q^*_i)} 
\|F_{u^*_i}(x)\|_{H}^{2}.
\end{align}
Having established \eqref{numer2}, we can complete the proof of \eqref{numer} by replacing the off-diagonal sum with the full sum and then combining the appropriate sums into $r$-th powers. 

Turning to the proof of \eqref{numer2}, we claim that, for every $2r$-tuple $(u^*_{1,1}, u^*_{1,2}, \dots, u^*_{r,1}, u^*_{r,2}) \in \mathcal I(q^*_{1})^2 \times \cdots \times \mathcal I(q^*_{r})^2$ with the uniqueness property, the Fourier transform of the function
\[
\prod_{i \in [r]} \langle F_{u^*_{i,1}}, F_{u^*_{i,2}} \rangle_{H}
\]
vanishes in a neighborhood of the origin so that \eqref{IWeq:6} holds. Indeed, since $H$ is separable, the Fourier transform of this function is a finite sum of functions, each supported on a set
\[
2r \varepsilon {\bf Q}^d +
(u^*_{1,1} - u^*_{1,2} + v_{1,1} - v_{1,2})
+ \dots +
(u^*_{r,1} - u^*_{r,2} + v_{r,1} - v_{r,2}),
\]
where $u^*_{i,1} + v_{i,1}, \, u^*_{i,2} + v_{i,2}$ are fractions that, in reduced form, have denominators at most $N$. We rewrite the above set as $2r \varepsilon {\bf Q}^d + v$ and observe that $v \notin \Z^d$ because, due to the uniqueness property, property $\mathcal O_N$, and $\mathbb P$-irreducibility, for at least one fraction $u^*_{i,1} - u^*_{i,2} + v_{i,1} - v_{i,2}$ the corresponding factor $q^*_i \in S_l$ cannot be completely eliminated from the denominator of $u^*_{i,1} - u^*_{i,2} + v_{i,1} - v_{i,2}$, hence the same is true for $v$ by property $\mathcal O_N$ because $q^*_i$ are distinct. Thus, for some $i \in [d]$, the distance between $\Z$ and the $i$-th coordinate of $v$ is at least $N^{-2r}$. By $\varepsilon \leq (4r N^{2r})^{-1}$, this implies $\Z^d \cap (2r \varepsilon {\bf Q}^d +v) = \emptyset$. Since condition \eqref{IWeq:6} is satisfied, an application of Proposition~\ref{UP} completes the proof of \eqref{numer2}.

\subsection{\textbf{Square function estimates}}
Gathering \eqref{denom-0}, \eqref{denom-II}, \eqref{denom-III}, and \eqref{IWeq:9} together, we see that the verification of \eqref{IW UPPER} has been reduced to proving
\begin{align*}
\sum_{L \subseteq [K]}
\| {\mathcal F}_{L,\emptyset,[K] \setminus L} \|_{\ell^1(\Z^d)} \lesssim_{r,d}  N^{2rC / \log \log N}
\mathbf{C}_{G}(\mathcal R^d_{\le N})^{2r} \|f\|_{\ell^{2r}(\Z^{d};H)}^{2r}.
\end{align*}
Since there are $2^K$ subsets $L \subseteq [K]$, we can deal with each term in the sum above separately at the expense of introducing a factor $C_r 2^{C_r \log N / \log \log N}$ which is acceptable.

We fix $L \subseteq [K]$ and set $\mathcal I' \coloneqq \mathcal I(\mathcal S_{[K] \setminus L}) \cap \mathcal R^d_{\le N}$. Since $\ind{N,K,j}(u+u') = 0$ for $u \in \mathcal I(q)$ and $q \in S_L$ when $u' \in \mathcal I(\mathcal S_{[K] \setminus L}) \setminus \mathcal R^d_{\le N}$, we are reduced to showing the inequality
\begin{align}
\label{IWeq:10}
\sum_{x \in \Z^d} 
\sum_{q \in \mathcal S_L}
\Big( 
\sum_{u' \in \mathcal I'}
\big \| 
f^\circ_{\mathcal I(q) +u'}(x) \big\|_{H}^{2} \Big)^r
\lesssim_{r,d} N^{2r C /\log \log N} \mathbf{C}_{G}(\mathcal R^d_{\le N})^{2r}
\|f\|_{\ell^{2r}(\Z^{d};H)}^{2r},
\end{align}
which we view as a linear operator norm bound from $\ell^{2r}(\Z^{d};H)$ to $\ell^{2r}(\Z^{d} \times \mathcal S_{L};\ell^{2}( \mathcal I' ;H))$. The reduction to \eqref{IWeq:10} uses the fact that $\mathcal I(q) \cap \mathcal I(q') = \emptyset$ holds for distinct $q, q' \in \mathcal S_{[K]\setminus L}$.

The following lemma is the key improvement, resulting in a factor of $1 / \log \log N$ in the exponent rather than the $\log \log \log N / \log \log N$ factor used in \cite[Theorem~3.3]{KMPWW}.

\begin{lemma} \label{L4}
	There exists $\mathbf C_{\rm div} \in \RR_+$ such that, for each $q \in \Z_+$ and $N \in \NN_{\geq N_0}$, if $q$ has $K \in [K_N]$ prime divisors, then $q$ has at most $M_N \coloneqq \mathbf C_{\rm div} N^{\mathbf C_{\rm div} / \log \log N}$ divisors $b \in [N]$. 
\end{lemma}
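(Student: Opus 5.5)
Any divisor $b\in[N]$ of $q$ has the form $b=\prod_{i\in[K]} p_i^{e_i}$, where $p_1,\dots,p_K$ are the prime divisors of $q$ and $0\le e_i\le \lfloor \log_{p_i} N\rfloor$, with $\prod_i p_i^{e_i}\le N$. The count therefore does not depend on $q$ beyond its set of prime divisors, and it is at most the number of integers in $[N]$ whose prime factors all lie in $\{p_1,\dots,p_K\}$. So I will bound $\Psi(N;\{p_1,\dots,p_K\})$, the number of such $\{p_1,\dots,p_K\}$-smooth integers up to $N$, uniformly over all $K$-element sets of primes with $K\le K_N=\lfloor 5\log N/\log\log N\rfloor$.

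\textbf{Reduction to the smallest primes.} Replacing the primes $p_1<\dots<p_K$ by the first $K$ primes $\tilde p_1<\dots<\tilde p_K$ can only increase the count. Indeed, $\tilde p_i\le p_i$, so the map $\prod p_i^{e_i}\mapsto \prod \tilde p_i^{e_i}$ is injective and keeps the product $\le N$. It therefore suffices to count exponent vectors $(e_1,\dots,e_K)\in\N^K$ with $\sum_i e_i\log \tilde p_i\le \log N$.

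\textbf{Counting.} I split the indices at a threshold $T$ and choose $T=(\log N)^{1/2}$.
\begin{itemize}
\item For the small primes $\tilde p_i\le T$, there are at most $T$ of them, and each exponent is at most $\log_2 N$. This gives at most $(1+\log_2 N)^{T}=\exp(O((\log N)^{1/2}\log\log N))$ choices, which is $N^{o(1/\log\log N)}$.
\item For the large primes $\tilde p_i>T$, we have $\log\tilde p_i>\tfrac12\log\log N$. Hence $\sum e_i \le 2\log N/\log\log N=:E$.
\end{itemize}
The number of nonnegative vectors of length at most $K\le K_N$ with sum at most $E$ is at most $\binom{K+E}{E}\le 2^{K+E}$. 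This is $2^{O(\log N/\log\log N)}=N^{O(1/\log\log N)}$.

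Multiplying the two counts gives $M_N\le \mathbf C_{\rm div}N^{\mathbf C_{\rm div}/\log\log N}$ for $N\ge N_0$, and the constant can be enlarged to absorb small $N$.

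The main obstacle is avoiding the lossy $\log\log\log N$ factor that a naive bound $\binom{K+E}{K}\le (eE/K)^K$ would produce with a worse threshold. The key point is that both $K$ and $E$ are $O(\log N/\log\log N)$ once exponents on small primes are handled separately. This in turn needs the threshold $T$ to satisfy two conditions: $\log T\gtrsim\log\log N$, and $T\log\log N=o(\log N/\log\log N)$. The choice $T=(\log N)^{1/2}$ meets both.
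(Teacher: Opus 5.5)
Your argument is correct, and it takes a different route from the paper.

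The paper also reduces to the first $K_N$ primes, but then treats all primes uniformly by a blocking argument. Each exponent range $\NN_{\le \log N/\log p_k}$ is cut into intervals of length $\log\log N/\log p_k$. The block indices then satisfy $\sum_k (i_k-1)\le \log N/\log\log N$, and stars and bars gives $2^{O(\log N/\log\log N)}$ block tuples. Each box contains at most $(3\log\log N)^K/(\log p_1\cdots\log p_K)$ exponent tuples. Showing that this is $\le C^K$ needs prime number theorem input: a partial-summation estimate $\log(\log p_1\cdots\log p_K)\ge K\log\log K-K$, plus $p_K\le 2K\log K$ to control the block lengths. In effect the paper uses that $\log p_k\gtrsim\log\log N$ holds on average over the first $K_N$ primes.

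You use a threshold instead of averaging. The at most $T=(\log N)^{1/2}$ primes below $T$ are handled crudely, at a cost of $\exp(O((\log N)^{1/2}\log\log N))$, which is negligible against $N^{1/\log\log N}$. All remaining primes satisfy $\log p\ge\frac12\log\log N$ pointwise. This reduces everything to a single stars-and-bars count in which $K$ and $E$ are both $O(\log N/\log\log N)$.

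Your version is shorter and fully elementary. It uses only $\pi(T)\le T$ and no prime number theorem. The reduction to the first $K$ primes is not even needed, since both halves of your count hold for an arbitrary set of $K\le K_N$ primes. Both routes give the same $N^{O(1/\log\log N)}$ bound and remove the $\log\log\log N$ loss of the naive estimate.
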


\begin{proof}
	Let $\{p_1, \dots, p_K\}$ be the set of prime divisors of $q$. We shall estimate the number of tuples $(\alpha_1,\ldots, \alpha_K) \in \NN^K$ such that $p_1^{\alpha_1}\cdots p_K^{\alpha_K}\le N$. We may take $K = K_N = \lfloor \frac{5 \log N}{\log \log N} \rfloor$ and the smallest $K$ prime numbers $p_1 < \dots < p_K$. Note that $p_1^{\alpha_1}\cdots p_K^{\alpha_K}\le N$ implies
    \[
    \alpha_1 \log p_1 + \dots + \alpha_K \log p_K = \log(p_1^{\alpha_1}\cdots p_K^{\alpha_K}) \leq \log N
    \]
    and so $\alpha_k \leq \log N / \log p_k$ for each $k \in [K]$. We partition $\NN_{\le \log N / \log p_k}$ into the intervals
	\[
	I_{k,i} \coloneqq \NN_{\le \log N / \log p_k} \cap \big[(i-1) \log \log N / \log p_k,\, i\log \log N / \log p_k\big).
	\]  
    If $N$ is large, then the prime number theorem implies $p_k \leq 2 K \log K \leq 20 \log N$ so that $|I_{k,i}| \leq 1 + \log \log N / \log p_k \leq 3 \log \log N / \log p_k$ for all $k \in [K]$ and $i \in \Z_+$. Consider
	\[
	\NN_{\le \log N / \log p_1} \times \cdots \times 
	\NN_{\le \log N / \log p_K}
	\ni (\alpha_1,\ldots, \alpha_K)
	\mapsto (i_1, \dots, i_K) \in \Z_+^K,
	\]
	where $(i_1, \dots, i_K)$ is the unique tuple such that
	$(\alpha_1,\ldots, \alpha_K)\in I_{1, i_1}\times\cdots\times I_{K, i_K}$. Then
	\[
	(i_1-1) \log \log N + \dots + 
	(i_K-1) \log \log N 
	\leq \alpha_1 \log p_1 + \dots + \alpha_K \log p_K \leq \log N 
	\]
	so that $i_1+ \cdots +i_K \leq K + \log N / \log \log N \leq 6 \log N / \log \log N$. Let $\mathbb I$ denote the set of all tuples $(i_1, \dots, i_K) \in \Z_+^K$ that satisfy this restriction. Then
	\begin{align} \label{i-tuples}
	|\mathbb I| \leq \binom{\lfloor 6 \log N / \log \log N \rfloor + K}{K}
	\leq 2^{\lfloor 6 \log N / \log \log N \rfloor + K}
	\le CN^{C/ \log \log N}. 
	\end{align}
	Next, for each $(i_1, \dots, i_K) \in \mathbb I$, note that $|I_{1, i_1}\times\cdots\times I_{K, i_K}|$ does not exceed
	\[
	(3 \log \log N / \log p_1) \cdots 
	(3 \log \log N / \log p_K)
	= (3 \log \log N)^K / (\log p_1 \cdots \log p_K). 
	\]
	To estimate the above, we use the prime-counting function $\pi(x) \coloneqq |\mathbb P \cap \RR_{\leq x}|$. We have
	\[
	\log (\log p_1 \cdots \log p_K)
	= \lim_{\delta \to 0^+} \int_{2-\delta}^{p_K + \delta} \log \log x \, {\rm d} \pi(x) = K \log \log p_K
	- \int_2^{p_K} \frac{\pi(x)}{x \log x} \, {\rm d}x
	\]
	with the last integral bounded by $C p_K / \log^2 p_K$. By $K \leq p_K \leq 2 K \log K$, this implies
	\[
	\log (\log p_1 \cdots \log p_K) \geq
	K \log \log K - 2C K / \log K 
    \geq K \log \log K - K,
	\]
    provided that $N$, and hence $K$, is large. Thus, for each $(i_1, \dots, i_K) \in \mathbb I$, we have
	\begin{align} \label{alpha-tuples}
    |I_{1, i_1}\times\cdots\times I_{K, i_K}| \leq (3 e \log \log N / \log K)^K \leq C^K \leq C N^{C / \log \log N}.
	\end{align}
	Combining \eqref{i-tuples} and \eqref{alpha-tuples} completes the proof.
\end{proof}

\begin{definition}[Admissible divisors]
We say that $b \in \Z_+$ is an \emph{admissible divisor} of $q \in \mathcal S_L$ if $b \in [N]$ and $Q_b = q$. Let $\mathcal B_q$ denote the set of all admissible divisors of $q \in \mathcal S_L$.
\end{definition}

Using Lemma~\ref{L4}, we partition $\mathcal I(q) \cap \mathcal R^d_{\le N}$ into disjoint sets $\mathcal R(b_1), \dots,\mathcal R(b_{M})$ for some $M \in [M_N]$ and $b_1, \dots, b_{M} \in \mathcal B_q$. Note that the set $\mathcal I(q)$ appearing on the left-hand side of \eqref{IWeq:10} can be replaced with $\mathcal I(q) \cap \mathcal R^d_{\le N}$. Hence, denoting $F_{\mathcal I}(x)\coloneqq \big( \sum_{u' \in \mathcal I'} \| f^\circ_{\mathcal I+u'}(x) \|_{H}^{2} \big)^r$ for $\mathcal I \subseteq \mathcal I(q)$, we can bound the left-hand side of \eqref{IWeq:10} as follows
\begin{align*}
\sum_{x \in \Z^d} 
\sum_{q \in \mathcal S_L}
F_{\mathcal I(q)}(x)\le M_N^{2r-1}
\sum_{q \in \mathcal S_L}\sum_{b \in \mathcal B_q}
\|F_{\mathcal R(b)}\|_{\ell^1(\Z^d)}
\le M_N^{2r} 
\sum_{q \in \mathcal S_L}\sup_{b \in \mathcal B_q}
\|F_{\mathcal R(b)}\|_{\ell^1(\Z^d)}
\end{align*}
using the triangle inequality and H{\"o}lder's inequality. Let $f_{\mathcal I} \coloneqq T_{\Z^d}\big[\llbracket G \mid \mathfrak m\rrbracket_{\mathcal I}\big] f$ be a variant of $f^\circ_{\mathcal I}$ without the factor $\ind{N,K,j}$. By linearizing the supremum, we are reduced to proving
\begin{align*}
\sum_{x \in \Z^d} 
\sum_{q \in {\mathcal S}_L}
\Big( 
\sum_{u' \in \mathcal I'}
\big \|  
 f_{\mathcal R(b_q)+u'} (x) \big\|_{H}^{2} \Big)^r
\lesssim_{r,d} N^{2r C  / \log \log N} \mathbf{C}_{G}(\mathcal R^d_{\leq N})^{2r}
\|f\|_{\ell^{2r}(\Z^{d};H)}^{2r}
\end{align*}
uniformly in all possible choices of admissible divisors $b_q \in \mathcal B_q$. Indeed, this follows because $\ind{N,K,j}$ is constant on each set $\mathcal R(b_q) + u'$ and vanishes on $\mathcal I(q) \setminus \mathcal R^d_{\le N}$.

Now, for each fixed choice of admissible divisors $b_q$ of $q\in {\mathcal S}_L$, we shall obtain
\begin{align}\label{IWeq:again}
\sum_{x \in \Z^d} 
\sum_{q \in {\mathcal S}_L}
\Big( 
\sum_{u' \in \mathcal I'}
\big \|  
 f_{\mathcal R(b_q)+u'} (x) \big\|_{H}^{2} \Big)^r
\lesssim_{r,d} 
\sum_{x \in \Z^d} 
\sum_{q \in {\mathcal S}_L}
\Big( 
\sum_{u' \in \mathcal I'}
\big \|  
 f^{\zeta_\varepsilon}_{\mathcal R(b_q)+u'} (x) \big\|_{H}^{2} \Big)^r,
\end{align}
where $f_{\mathcal I}^{\zeta_\varepsilon} \coloneqq T_{\Z^d} \big[\llbracket G \mid {\zeta_\varepsilon}\rrbracket_{\mathcal I}\big] f$ is a variant of $f_{\mathcal I}$ with $\mathfrak m$ replaced by a bump multiplier ${\zeta_\varepsilon}$.  We shall use the vector-valued Marcinkiewicz--Zygmund theorem. Indeed, once $q \in {\mathcal S}_{L}$ is fixed, both sides of \eqref{IWeq:again} are $2r$-th powers of norms of some $\ell^{2}(\mathcal I';H)$-valued functions.
\begin{theorem}
	\label{MZ}
	Fix $r \in \Z_+$. There exists ${\bf C}_{\rm MZ}(r) \in \RR_+$ such that, for each $\sigma$-finite space $(X,\mathcal B(X),\mu)$, if $T$ is a linear operator satisfying $\| T \|_{L^{2r}(X;H) \to L^{2r}(X;H)} \leq 1$, then
	\begin{align*}
	\Big \| \Big( \sum_{n\in \Z_+}  \| TF_{n}\|_{H}^{2} \Big)^{1/2} \Big\|_{L^{2r}(X,\mu)}
	\leq {\bf C}_{\rm MZ}(r)
	\Big\| \Big( \sum_{n\in \Z_+} \| F_{n} \|_{H}^{2} \Big)^{1/2} \Big\|_{L^{2r}(X,\mu)}.
	\end{align*}
\end{theorem}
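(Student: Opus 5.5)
The plan is to use the classical Khintchine randomization argument, carried out in the Hilbert space $H$. Let $(\epsilon_n)_{n\in\Z_+}$ be independent Rademacher signs on a probability space $(\Omega,\mathbb P)$. By monotone convergence it suffices to treat finitely many $F_1,\dots,F_M$, with bounds independent of $M$. The one tool needed is the $H$-valued Khintchine inequality: for every $x\in X$ and every $s\in(0,\infty)$,
\[
\Big(\mathbb E\Big\|\sum_{n\in[M]}\epsilon_n h_n\Big\|_H^{s}\Big)^{1/s}\simeq_s \Big(\sum_{n\in[M]}\|h_n\|_H^2\Big)^{1/2}, \qquad h_1,\dots,h_M\in H.
\]
For $s=2r$ I will prove this directly and not quote Kahane's inequality. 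Expand $\|\sum_n \epsilon_n h_n\|_H^{2r}=\big(\sum_{n,m}\epsilon_n\epsilon_m\langle h_n,h_m\rangle_H\big)^r$. Taking expectations, only those index tuples survive in which every index appears an even number of times. This gives the upper bound $\le C_r(\sum_n\|h_n\|_H^2)^r$ by a combinatorial count; it is essentially the same counting that lies behind Lemma~\ref{L3}. The lower bound $\mathbb E\|\cdot\|^{2r}\ge(\mathbb E\|\cdot\|^2)^r=(\sum_n\|h_n\|_H^2)^r$ is just Jensen/Hölder together with orthogonality of the $\epsilon_n$.

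With this in hand the chain is as follows. First, pointwise in $x$, apply the lower Khintchine bound to $h_n=TF_n(x)$ and integrate over $X$:
\[
\Big\|\Big(\sum_n\|TF_n\|_H^2\Big)^{1/2}\Big\|_{L^{2r}(X)}^{2r}\le \mathbb E\int_X\Big\|\sum_n\epsilon_n TF_n\Big\|_H^{2r}\,{\rm d}\mu.
\]
Fubini is legitimate here since everything is nonnegative and the sum is finite. Second, by linearity $\sum_n\epsilon_nTF_n=T\big(\sum_n\epsilon_nF_n\big)$, and for each fixed $\omega$ the function $\sum_n\epsilon_n(\omega)F_n\in L^{2r}(X;H)$. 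The hypothesis $\|T\|_{L^{2r}(X;H)\to L^{2r}(X;H)}\le1$ then bounds the inner integral by $\int_X\|\sum_n\epsilon_nF_n\|_H^{2r}\,{\rm d}\mu$. Third, apply Fubini again and the upper Khintchine bound pointwise with $h_n=F_n(x)$. This yields the claim with ${\bf C}_{\rm MZ}(r)=C_r^{1/2r}$.

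I expect no serious obstacle. The only step requiring care is the vector-valued Khintchine upper bound with a constant depending only on $r$, and in particular not on $\dim H$ or on $M$. The expansion handles this: after expectation, each surviving term is a product of inner products $\langle h_{n_i},h_{m_i}\rangle_H$ whose indices pair up. Cauchy--Schwarz bounds each such term by a product of $\|h_n\|_H^2$ factors, and the number of pairing patterns depends only on $r$. Measurability of $x\mapsto\sum_n\epsilon_nF_n(x)$ is clear since $H$ is separable.
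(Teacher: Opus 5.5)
Your proof is correct and complete. You randomize with Rademacher signs and prove the $H$-valued Khintchine inequality at the even exponent $2r$ by expanding the moment: Jensen plus orthogonality gives the lower bound, and the pairing count gives the upper bound with constant $(2r-1)!!$, independent of $M$ and $\dim H$. Applying $\|T\|\le 1$ to $\sum_n \epsilon_n F_n$ for each fixed sign pattern and truncating to finitely many $F_n$ is the standard argument for such Marcinkiewicz--Zygmund extensions. The paper gives no argument of its own here and only refers to \cite[Theorem~2.48]{MSZ3}, so your proposal makes the step self-contained, with the explicit constant ${\bf C}_{\rm MZ}(r)=((2r-1)!!)^{1/(2r)}$.
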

\begin{proof}
A proof can be found in \cite[Theorem~2.48]{MSZ3}. 
\end{proof}
Recall that $\mathfrak m$ is supported on $\varepsilon \mathbf{Q}^d$ and $ \varepsilon \leq (4r N^{2r})^{-1}$ holds by \eqref{eq:support}. Let \( \phi \colon \RR^d \to [0,1] \) be a~fixed smooth function supported on $\frac{5}{3}\mathbf{Q}^d$ and identically equal to $1$ on $\frac{4}{3}\mathbf{Q}^d$. Let $\psi$ be a~fixed nonnegative smooth function supported on $\frac{1}{3} \mathbf{Q}^d$, normalized by $\int_{\RR^d} \psi(x)\, \mathrm{d}x = 1$.
	
	We define $\zeta  \coloneqq  \phi * \psi$. Note that $\zeta \colon \RR^d \to [0,1]$ is a smooth function supported on $2 \mathbf{Q}^d$ and identically equal to $1$ on $\mathbf{Q}^d$. We also define $\Phi_\varepsilon(x)  \coloneqq  \Phi(x/\varepsilon)$ for $\Phi \in \{\phi, \psi, \zeta\}$.
    
    Fix $q \in {\mathcal S}_{L}$ and $u' \in \mathcal I'$. We use the function $\zeta_\varepsilon$ to obtain the following factorization
\[
\llbracket G \mid \mathfrak m\rrbracket_{\mathcal R(b_q) + u'}
=  \llbracket 1 \mid \mathfrak m\rrbracket_{\mathcal Q(b_q) + u'}\cdot
\llbracket G \mid \zeta_{\varepsilon}\rrbracket_{\mathcal R(b_q) + u'}.
\]
The multiplier $\llbracket 1 \mid \mathfrak m\rrbracket_{\mathcal Q(b_q) + u'}$ corresponds, up to modulation determined by $u'$, to the operator $T_{\Z^d}\big[\llbracket 1 \mid \mathfrak m\rrbracket_{\mathcal Q(b_q)}\big]$ which is bounded on $\ell^{2r}(\Z^{d};H)$. In fact, by $b_q \leq N$ and $\varepsilon \leq (4r N^{2r})^{-1}$, we can use  Theorem~\ref{prop:msw} to bound its norm by ${\bf C}_{\rm MSW}(d)$. By modulation symmetry of $\mathcal F_{\Z^d}$ and Theorem~\ref{MZ} applied with $f^{\zeta_\varepsilon}_{\mathcal R(b_q)+u'}$ in place of $F_n$, we obtain \eqref{IWeq:again}.

\subsection{Square function estimates for the nice bump multiplier}
Finally, for a fixed choice of admissible divisors $b_q \in \mathcal B_q$ and for each $s\in[1, \infty]$, we prove the following inequality
\begin{align}
\label{IWeq:13}
\bigg [ \sum_{x \in \Z^d} 
\sum_{q \in \mathcal S_L}
\Big( 
\sum_{u' \in \mathcal I' }
\big \| 
f^{\zeta_\varepsilon}_{\mathcal R(b_q)+u'}(x) \big\|_{H}^{2} \Big)^s \bigg ]^{1/2s}
\lesssim_d N^{C / \log \log N} \mathbf{C}_{G}(\mathcal R^d_{\leq N})
\|f\|_{\ell^{2s}(\Z^{d};H)},
\end{align}
which we view as a linear operator norm bound from $\ell^{2s}(\Z^{d};H)$ to $\ell^{2s}(\Z^{d} \times \mathcal S_{L};\ell^{2}( \mathcal I' ;H))$. By interpolation, it suffices to prove \eqref{IWeq:13} for $s \in \{1, \infty\}$. 

\smallskip \paragraph{\bf Case $s=1$} Each $\llbracket G \mid \zeta_{\varepsilon}\rrbracket_{u + u'}$ is supported on $u + u' + 2\varepsilon {\bf Q}^d$. We claim that these sets are pairwise disjoint. Indeed, for distinct $q, q'\in {\mathcal S}_L$, the corresponding admissible divisors $b_{q}, b_{q'} \in [N]$ are also distinct, since $q=Q_{b_q} \neq Q_{b_{q'}}=q'$. Hence, the rational numbers $u + u'$ appearing in \eqref{IWeq:13} do not repeat. By \eqref{eq:support} and $r \geq 2$, we obtain $\varepsilon \leq (4r N^{2r})^{-1} \le (8 N^{4})^{-1}$ and the claim follows, since the denominators of $u + u'$ do not exceed $N^2$. By Plancherel's theorem \eqref{IWeq:13} holds with $\mathbf{C}_{G}(\mathcal R^d_{\leq N})$ in place of $N^{C / \log \log N} \mathbf{C}_{G}(\mathcal R^d_{\leq N})$.

\smallskip \paragraph{\bf Case $s=\infty$} 
For every $b_q \in \mathcal B_q$, we have the following inclusion--exclusion formula
\[
\sum_{u\in \mathcal R(b_q)} F(u)
= \sum_{b \in \Z_+ : \, b|b_q} \mu(b_q / b) \sum_{u \in \mathcal Q(b)} F(u),
\] 
where $\mu \colon \Z_+ \to \{-1,0,1\}$ is the M\"obius function. Now \eqref{IWeq:13} for $s=\infty$ follows from this decomposition with $F(u)=f^{\zeta_\varepsilon}_{u+u'}$ for $u\in \mathcal R(b_q)$ and Lemma~\ref{L4}, if only we can show
\begin{align*}
\Big( \sum_{u' \in \mathcal I' } 
\big\| f^{\zeta_\varepsilon}_{\mathcal Q(b)+u'}(x)
\big\|_{H}^2 \Big)^{1/2}
\lesssim_d \mathbf{C}_{G}(\mathcal R^d_{\leq N})
\|f\|_{\ell^{\infty}(\Z^{d};H)}
\end{align*}
with the implicit constant controlled uniformly in $x \in \Z^d$ and $b \in \bigcup_{q \in \mathcal S_L} \mathcal B_q$. By translation invariance, we can consider $x=0$. By duality, it suffices to prove
\begin{align} \label{5-squarefunction:3}
\Big\| 
\sum_{u' \in \mathcal I' } h_{u'} \,
\mathcal F_{\Z^d}^{-1} \big(
\llbracket G \mid \zeta_{\varepsilon}\rrbracket_{\mathcal Q(b) + u'}
\big) \Big\|_{\ell^1(\Z^{d};H)}
\lesssim_d \mathbf{C}_{G}(\mathcal R^d_{\leq N}) \Big( \sum_{u' \in \mathcal I'} \|h_{u'}\|_H^2 \Big)^{1/2}
\end{align}
for any sequence $(h_{u'} : u'\in \mathcal I')\subseteq H$. 

\begin{proof}[Proof of inequality \eqref{5-squarefunction:3}]
For ease of exposition, we will proceed in a few steps.

\smallskip \paragraph{\bf Step~1} We split the left-hand side of \eqref{5-squarefunction:3} as follows
\begin{equation} \label{N-squarefunction:4} 
		\sum_{a \in [b]^d} 
		\Big\| \sum_{u' \in \mathcal I' } h_{u'} \,
        \mathcal F_{\Z^d}^{-1}
        \big( \llbracket G \mid \zeta_{\varepsilon}\rrbracket_{\mathcal Q(b) + u'}
        \big) 
		\Big\|_{\ell^1(b\mathbb{Z}^d+a; H)}
\end{equation}
and notice that, for each $u\in \mathcal Q(b)$ and $u' \in \mathcal I'$, the denominators of $u$ and $u'$ are coprime. Hence, by condition~\ref{cond:A} and the fact that translation in the Fourier variable corresponds to modulation in the
spatial variable, the integrand in
\eqref{N-squarefunction:4} evaluated at $bx+a$ is equal to
\begin{equation*}
    \sum_{u' \in \mathcal I'} h_{u'}
    \sum_{u\in \mathcal Q(b)}
    G(u)\,e\big((bx+a) \cdot u \big) \,
    \mathcal{F}_{\mathbb{Z}^d}^{-1} \big(\llbracket G \mid \zeta_{\varepsilon}\rrbracket_{u'} \big)(bx+a).
\end{equation*}
Since the denominator of each $u \in\mathcal{Q}(b)$ divides $b$, we can rewrite this as
\begin{equation*}
    \sum_{u \in\mathcal{Q}(b)}G(u)\,e(a \cdot u)\,
    \sum_{u' \in \mathcal I'}
    h_{u'}
    \, 
    \mathcal{F}_{\mathbb{Z}^d}^{-1}\big(
\llbracket G \mid \zeta_{\varepsilon}\rrbracket_{u'}
    \big)(bx+a).
\end{equation*}
Consequently, for each fixed $a\in[b]^d$, the corresponding $\ell^1$ term in \eqref{N-squarefunction:4} is equal to
\begin{equation}\label{N-squarefunction:5}
    \Big|\sum_{u \in\mathcal{Q}(b)}G(u)\,e(a \cdot u) \Big|\,
    \Big\| \sum_{u' \in \mathcal I'}
    h_{u'} \, \mathcal{F}_{\mathbb{Z}^d}^{-1}
    \big(\llbracket G \mid \zeta_{\varepsilon}\rrbracket_{ u'}
    \big)
    \Big\|_{\ell^1(b\mathbb{Z}^d+a;H)}.
\end{equation}
		
\smallskip \paragraph{\bf Step~2} 
Noting that $\zeta_\varepsilon = \varepsilon^{-d} \psi_\varepsilon * \phi_\varepsilon$, we can write $\llbracket G \mid \zeta_{\varepsilon}\rrbracket_{u'} = \varepsilon^{-d} \, \psi_\varepsilon * \llbracket G \mid \phi_{\varepsilon}\rrbracket_{u'}$ for each $u' \in \mathcal I'$. Since the Fourier transform intertwines convolution with pointwise multiplication, we can apply the Cauchy--Schwarz inequality to the $\ell^1$ norm in \eqref{N-squarefunction:5} to obtain the estimate
		\begin{equation} \label{N-squarefunction:5.0} \varepsilon^{-d} \| \mathcal{F}_{\mathbb{Z}^d}^{-1} \psi_\varepsilon \|_{\ell^2(b\mathbb{Z}^d+a)} \cdot \Big \| \sum_{u' \in \mathcal I'} h_{u'} \, \mathcal{F}_{\mathbb{Z}^d}^{-1} \big(\llbracket G \mid \phi_{\varepsilon}\rrbracket_{u'} \big) \Big \|_{\ell^2(b\mathbb{Z}^d+a; H)}.
		\end{equation}
In the next two steps we  show that both integrands above are, in the sense of their $\ell^2$ norms, equally distributed among the residue classes, which will be essential to establish \eqref{5-squarefunction:3}.

\smallskip \paragraph{\bf Step~3} 
Define $\psi_{\varepsilon, b, a} \colon \TT^d \to \C$ by
        \[
        \psi_{\varepsilon, b, a}(\xi) \coloneqq
        b^{-d} \sum_{\beta \in \mathcal Q(b)} \psi_{\varepsilon}(\xi+\beta) \, e \big(a \cdot (\xi+\beta) \big).
        \]
Substituting $\zeta = \xi + \beta$, we observe that 
        \[
        \mathcal{F}_{\mathbb{Z}^d}^{-1} \psi_{\varepsilon, b, a}(n)
        = b^{-d} \sum_{\beta \in \mathcal Q(b)} \int_{\TT^d} \psi_{\varepsilon}(\zeta) \, e(a \cdot \zeta) \, e \big(n \cdot (\zeta-\beta) \big) \, {\rm d}\zeta = \ind{b\Z^d}(n) \cdot \mathcal{F}_{\mathbb{Z}^d}^{-1} \psi_\varepsilon (n+a). 
        \]
Thus, by Plancherel's theorem and \eqref{eq:support}, we obtain
        \begin{align}
        \label{eq:2}
        \| \mathcal{F}_{\mathbb{Z}^d}^{-1} \psi_\varepsilon \|_{\ell^2(b\mathbb{Z}^d+a)} =
        \| \mathcal{F}_{\mathbb{Z}^d}^{-1} \psi_{\varepsilon, b, a} \|_{\ell^2(\mathbb{Z}^d)}
        = b^{-d/2} \| \psi_{\varepsilon} \|_{\ell^2(\TT^d)}.
        \end{align}

\smallskip \paragraph{\bf Step~4}
Analogously, define $\phi_{\varepsilon, b, a} \colon \TT^d \to \C$ by
        \[
        \phi_{\varepsilon, b, a}(\xi) \coloneqq
        b^{-d} \sum_{u' \in \mathcal I'} \sum_{\beta \in \mathcal Q(b)} 
        h_{u'} \, G(u') \, \phi_{\varepsilon}(\xi+\beta-u') \, e \big(a \cdot (\xi+\beta) \big).
        \]
Then, as in the previous step, we obtain
        \begin{align*}
        \mathcal{F}_{\mathbb{Z}^d}^{-1} \phi_{\varepsilon, b, a}(n)=        \ind{b\Z^d}(n) \cdot\sum_{u' \in \mathcal I'}
        h_{u'} \, \mathcal{F}_{\mathbb{Z}^d}^{-1} 
		\big( \llbracket G \mid \phi_{\varepsilon}\rrbracket_{u'}\big)(n+a),
        \end{align*}
and consequently, by Plancharel's theorem, we may further write
        \begin{align}
        \label{eq:3}
        \Big \| \sum_{u' \in \mathcal I'}
        h_{u'} \, \mathcal{F}_{\mathbb{Z}^d}^{-1} 
		\big( \llbracket G \mid \phi_{\varepsilon}\rrbracket_{u'}\big)
		\Big \|_{\ell^2(b\mathbb{Z}^d+a; H)}
        = b^{-d/2} \Big \| \sum_{u' \in \mathcal I'}
        h_{u'} \, \llbracket G \mid \phi_{\varepsilon}\rrbracket_{u'} \Big \|_{\ell^2(\TT^d; H)}
        \end{align}
after observing that the pieces of $\phi_{\varepsilon, b, a}$ do not overlap. Indeed, any two distinct elements $u' + \beta$ with $u' \in \mathcal I'$ and $\beta \in \mathcal{Q}(b)$ are separated in the $\ell^\infty$ metric by at least $N^{-4}$, while \eqref{eq:support} and $r \geq 2$ imply $\varepsilon \leq (4r N^{2r})^{-1} \leq (8N^4)^{-1}$, which guarantees \eqref{eq:3}.

\smallskip \paragraph{\bf Step~5}
Using \eqref{eq:2} and \eqref{eq:3}, we conclude that \eqref{N-squarefunction:5.0} is equal to
        \[
        \varepsilon^{-d} b^{-d/2} \| \mathcal{F}_{\mathbb{Z}^d}^{-1} \psi_\varepsilon \|_{\ell^2(\mathbb{Z}^d)} \cdot b^{-d/2} \Big \| \sum_{u' \in \mathcal I'}
        h_{u'} \, \mathcal{F}_{\mathbb{Z}^d}^{-1}
		\big(\llbracket G \mid \phi_{\varepsilon}\rrbracket_{u'}
		\big) 
		\Big \|_{\ell^2(\mathbb{Z}^d; H)}.
        \]
Thus, by Plancherel's theorem and condition~\ref{cond:B}, we can control \eqref{N-squarefunction:5} by
        \[
        C(d)
        \Big|\sum_{u \in\mathcal{Q}(b)}G(u)\,e(a \cdot u) \Big| \cdot b^{-d} \|G\|_{\ell^\infty(\mathcal R^d_{\leq N})} 
		\Big( \sum_{u' \in \mathcal I'} \|h_{u'}\|_H^2 \Big)^{1/2},
        \]
since, as in the previous step, \eqref{eq:support} implies that the functions $\llbracket G \mid \phi_{\varepsilon}\rrbracket_{u'}$ do not overlap when $u'$ runs over the set $\mathcal I'$. Summing this bound over the residue classes $a\in[b]^d$ yields the desired inequality \eqref{5-squarefunction:3}. This also completes the proof of Theorem~\ref{thm:IW:upper}.	
\end{proof}
   
\section{Proof of Theorem~\ref{thm:IW:lower}: the lower bound} \label{sec:lower}

In this section, we present a detailed proof of Theorem~\ref{thm:IW:lower}, illustrating that the dependence on $N$ in Theorem~\ref{thm:IW:upper} cannot be eliminated. This contrasts with Tao's result \cite{TaoIW} that establishes the Ionescu--Wainger multiplier theorem for the set of Ionescu--Wainger fractions with uniform bounds with respect to the size of the underlying family of fractions.

Our goal is to prove that, for $p \in (1,\infty) \setminus \{2\}$ and infinitely many $N \in \Z_+$, we have
\begin{equation} \label{eq:N-c_p}
     \big \| T_{\Z} \big[\llbracket 1 \mid {\mathfrak m}_{\varepsilon_N}\rrbracket_{\mathcal R_{\leq N}}\big] \big \|_{\ell^{p}(\Z) \to \ell^{p}(\Z)}
     \gtrsim_{p}
     N^{(\log N)^{-c_p}}
\end{equation}
with some $0 < \varepsilon_N \leq \vartheta_N$.
\begin{proof}[Proof of inequality \eqref{eq:N-c_p}]
Without loss of generality, we can assume that $(\vartheta_N)_{N\in \mathbb{Z}_+}$ is nonincreasing. We can work with  $\mathcal R(N)=\mathcal R_{\leq N}\setminus \mathcal R_{\leq N-1}$ in place of $\mathcal R_{\leq N}$ in \eqref{eq:N-c_p}, since
\[
\bigl\| T_{\Z} \big[\llbracket 1 \mid {\mathfrak m}_{\varepsilon_N}\rrbracket_{\mathcal R(N)}\big] \bigr\|_{\ell^{p}(\Z) \to \ell^{p}(\Z)}
\le\sum_{J\in\{N-1, N\}}
\bigl\| T_{\Z}\big[\llbracket 1 \mid {\mathfrak m}_{\varepsilon_N}\rrbracket_{\mathcal R_{\leq J}}\big] \bigr\|_{\ell^{p}(\Z) \to \ell^{p}(\Z)}.
\]
By duality, we can assume that $p\in(1,2)$ in \eqref{eq:N-c_p}. We will proceed in a few steps.

\smallskip \paragraph{\bf Step~1}
Let $\varphi \colon \R \to \R$ be a smooth function such that $\ind{{\bf Q}} \le \varphi \le \ind{2{\bf Q}}$. Then $\varphi \cdot \mathfrak m = \mathfrak m$. Setting  $\varphi_{\varepsilon_N}(x) \coloneqq \varphi(x/{\varepsilon_N})$, and taking $\varepsilon_N \in (0,1/4N)$, we have 
\begin{equation*}
\llbracket 1 \mid {\mathfrak m}_{\varepsilon_N}\rrbracket_{\mathcal R(N)}
    =
\llbracket 1 \mid {\mathfrak m}_{\varepsilon_N}\rrbracket_{\mathcal R(N)} \cdot
\llbracket 1 \mid \varphi_{\varepsilon_N}\rrbracket_{\mathcal Q(N)}.
\end{equation*}
Thus, for every $f\in \ell^p(\Z)$, we have
\begin{equation*}
    \big \| T_{\Z}\big[\llbracket 1 \mid {\mathfrak m}_{\varepsilon_N}\rrbracket_{\mathcal R(N)}\big] f \big \|_{\ell^p(\Z)}
    \le
    \big \|  T_{\Z}\big[\llbracket 1 \mid {\mathfrak m}_{\varepsilon_N}\rrbracket_{\mathcal R(N)}\big]\big\|_{\ell^p(\Z)\to\ell^p(\Z)}
    \big \| T_{\Z}\big[\llbracket 1 \mid \varphi_{\varepsilon_N}\rrbracket_{\mathcal Q(N)}\big] f \big\|_{\ell^p(\Z)}.
\end{equation*}
In particular, taking $f$ to be $\delta_0 \coloneqq \ind{\{0\}}$, we obtain the lower bound
\begin{equation}\label{eq:lower bound 1}
    \big \|  T_{\Z}\big[\llbracket 1 \mid {\mathfrak m}_{\varepsilon_N}\rrbracket_{\mathcal R(N)}\big]\big\|_{\ell^p(\Z)\to\ell^p(\Z)}
    \ge
    \frac{
           \big \| T_{\Z}\big[\llbracket 1 \mid {\mathfrak m}_{\varepsilon_N}\rrbracket_{\mathcal R(N)}\big] \delta_0 \big \|_{\ell^p(\Z)}
    }{
            \big \| T_{\Z}\big[\llbracket 1 \mid \varphi_{\varepsilon_N}\rrbracket_{\mathcal Q(N)}\big] \delta_0 \big\|_{\ell^p(\Z)}
    }.
\end{equation}

\smallskip \paragraph{\bf Step~2} We now carry out some calculations that will be used later. Fix a $1$-periodic weight $G\colon \QQ \to \C$ and a smooth function $\eta \colon \R \to \R$ supported on ${\bf Q}$. For every $N \in \NN$, we have
\begin{equation}\label{ce:eq1}
    \mathcal{F}_{\Z}^{-1} \big( \llbracket G \mid \eta\rrbracket_{\mathcal R(N)} \big)(n)
    = \mathcal{F}_{\Z}^{-1}(\eta)(n)
    \sum_{u \in \mathcal R(N)}
    G(u)\,e(n \cdot u).
\end{equation}
Analogously, for $G \equiv 1$ and $\mathcal R(N)$ replaced by $\mathcal Q(N)$, we have 
\begin{equation}\label{ce:eq2}
    \mathcal{F}_{\Z}^{-1} \big( \llbracket 1 \mid \eta\rrbracket_{\mathcal Q(N)} \big)(n)
    = N\,\mathds{1}_{N\mid n}\,\mathcal{F}_{\Z}^{-1}(\eta)(n).
\end{equation}

\smallskip \paragraph{\bf Step~3} We apply \eqref{ce:eq2} 
and, for any sufficiently small $\varepsilon_N \in (0,\vartheta_N)$, obtain
\begin{align}\label{eq:Q-bound-1}
            \big \| T_{\Z}\big[\llbracket 1 \mid \varphi_{\varepsilon_N}\rrbracket_{\mathcal Q(N)}\big] \delta_0 \big\|_{\ell^p(\Z)}^p
    = N^p \| \mathcal{F}_{\Z}^{-1}(\varphi_{\varepsilon_N}) \|_{\ell^p(N\Z)}^p
      \simeq N^{p-1} \varepsilon_N^{p-1},
\end{align}
using the $\ell^p$ scaling behavior of the kernel $\mathcal{F}_{\Z}^{-1}(\varphi_{\varepsilon_N})$. Applying \eqref{ce:eq1} with $G \equiv 1$ yields
\begin{equation*} 
\mathcal{F}_{\Z}^{-1} \big( \llbracket 1 \mid {\mathfrak m}_{\varepsilon_N}\rrbracket_{\mathcal R(N)}\big)(n)
= \mathcal{F}_{\Z}^{-1}(\mathfrak m_{\varepsilon_N})(n) h_n(N),
\end{equation*}
where $h_n(N)\coloneqq \sum_{u \in \mathcal R(N)} e(n \cdot u)$. Then, for any sufficiently small $\varepsilon_N \in (0,\vartheta_N)$, we obtain
\begin{align}\label{eq:R-bound-1}
\big \| T_{\Z}\big[\llbracket 1 \mid {\mathfrak m}_{\varepsilon_N}\rrbracket_{\mathcal R(N)}\big] \delta_0 \big\|_{\ell^p(\Z)}^p
= \sum_{n \in [N]} |h_n(N)|^p \| \mathcal{F}_{\Z}^{-1}(\mathfrak m_{\varepsilon_N}) \|_{\ell^p(N\Z + n)}^p
\simeq H(N) \varepsilon_N^{p-1},
\end{align}
where $H(N) \coloneqq |h_1(N)|^p / N + \dots + |h_N(N)|^p /N$. In the first identity we split the summation into the classes modulo $N$ and used the periodicity $h_{n+N}(N) = h_n(N)$. The second bound follows from the $\ell^p$ scaling behavior of the kernel $\mathcal{F}_{\Z}^{-1}({\mathfrak m}_{\varepsilon_N})$. 

\smallskip \paragraph{\bf Step~4} Given $n \in \Z$, we notice that $h_n(N_1 N_2) = h_n(N_1) h_n(N_2)$ holds for any $N_1,N_2 \in \Z_+$ such that ${\rm gcd} (N_1,N_2) = 1$. In particular, for $N = q_1 \cdots q_K$, where $q_1, \dots, q_K$ are the first $K$ prime numbers, we have 
\[
|h_n(N)| = |h_n(q_1) \cdots h_n(q_K)| = |(q_1 \, \ind{q_1 | n}  - 1) \cdots (q_K \, \ind{q_K | n}  - 1)| =
N a_1(n) \cdots a_K(n)
\]
with $a_k(n) \coloneqq |\ind{q_k | n} - 1/q_k|$ for each $k \in [K]$. We shall interpret the functions $a^p_1, \dots, a^p_K$ as independent random variables on $[N]$ with normalized counting measure, such that $a^p_k = (1-1/q_k)^p$ with probability $1/q_k$ and $a^p_k = (1/q_k)^p$ with probability $1-1/q_k$. We have
\[
H(N) = N^{p} \mathbb{E}(a_1^p \cdots a_K^p) = N^{p} \mathbb{E}(a_1^p) \cdots \mathbb{E}(a_K^p).
\]
Since $\mathbb{E}(a_k^p) = (1/q_k)(1-1/q_k)^p + (1-1/q_k)(1/q_k)^p$, we obtain the following formula
\[
\mathbb{E}(a_1^p) \cdots \mathbb{E}(a_K^p)
= \sum_{L \subseteq [K]} \Big( \prod_{l \in L} (1/q_l) (1 - 1/q_l)^p \prod_{l' \in [K] \setminus L} (1-1/q_{l'}) (1/q_{l'})^p \Big).  
\]
Denote $S_K \coloneqq (1-1/q_1) \cdots (1-1/q_K)$ and estimate the last quantity from below by
\[
S_K^p 
\sum_{L \subseteq [K]} \Big( \prod_{l \in L} (1/q_l) \prod_{l' \in [K] \setminus L} (1/q_{l'})^p \Big)
= S_K^p \prod_{k \in [K]} (1/q_k + 1/q_k^p)
= S_K^p N^{-1} \prod_{k \in [K]} (1 + q_k^{1-p}). 
\]

\smallskip \paragraph{\bf Step~5} Combining these estimates with \eqref{eq:lower bound 1}, \eqref{eq:Q-bound-1}, and \eqref{eq:R-bound-1}, we obtain the lower bound
\[
\big \|  T_{\Z}\big[\llbracket 1 \mid {\mathfrak m}_{\varepsilon_N}\rrbracket_{\mathcal R(N)}\big]\big\|_{\ell^p(\Z)\to\ell^p(\Z)}
\gtrsim
\prod_{k \in [K]} (1 - q_k^{-1})^p (1 + q_k^{1-p})
\]
for $N = q_1 \cdots q_K$. The natural logarithm of the right-hand side is equal to
\[
\sum_{k \in [K]} \big( \log ( 1 + q_k^{1-p}) + p \log(1-q_k^{-1}) \big).
\]
If $K$ is large, then the prime number theorem implies $q_K \simeq \log N$. Moreover, we have
\[
\sum_{k \in [K]} \log ( 1 + q_k^{1-p}) \gtrsim
\sum_{k \in [K]} q_k^{1-p}
\gtrsim q_K^{2-p} / \log q_K 
\]
by $\pi(q_K) - \pi(q_K/2) \gtrsim q_K / \log q_K$, where $\pi$ is the prime-counting function, and 
\[
\sum_{k \in [K]} - p \log(1-q_k^{-1}) \lesssim 
\sum_{k \in [K]} 1 / q_k \lesssim \log \log q_K
\]
by the second Mertens theorem. Fix $\delta \in \R_+$ such that $p+\delta\in(1, 2)$. Combining the above bounds, we arrive at
\[
\log \big \|  T_{\Z}\big[\llbracket 1 \mid {\mathfrak m}_{\varepsilon_N}\rrbracket_{\mathcal R(N)}\big]\big\|_{\ell^p(\Z)\to\ell^p(\Z)}
\gtrsim_\delta q_K^{2-p-\delta} \simeq (\log N)^{2-p-\delta}. 
\]
Thus, for infinitely many $N$, we have \eqref{eq:N-c_p} for all sufficiently small $\varepsilon_N \in (0, \vartheta_N)$.  
\end{proof}

\begin{remark}
\label{rem:0}
We close this section by emphasizing that the method of proving inequality \eqref{eq:N-c_p} (especially Steps~1--3) enables us to construct bounded weights $G$ satisfying condition~\ref{cond:A} and Schwartz functions ${\mathfrak m}$ supported on ${\bf Q}$ for which the following holds. If $p \in (1,\infty) \setminus \{2\}$, then there exists $\delta \in \R_+$ such that, for any sequence $(\vartheta_N)_{N \in \Z_+}\subseteq (0,1)$ and for infinitely many integers $N\in\Z_+$, we have
\begin{equation*}
     \big \|  T_{\Z}\big[\llbracket G \mid {\mathfrak m}_{\varepsilon_N}\rrbracket_{\mathcal R(N)}\big]f\big\|_{\ell^{p}(\Z)}
     \gtrsim_{p,\delta}
     N^\delta \|f\|_{\ell^p(\Z)}
\end{equation*}
with some $0 < \varepsilon_N \leq \vartheta_N$ and ${\mathfrak m}_{\varepsilon_N}(x) \coloneqq {\mathfrak m}(x/\varepsilon_N)$. Taking $r \in \NN_{\geq 2}$ and $\vartheta_N= 2^{-N}$, we see that \eqref{eq:support} holds for all sufficiently large $N\in\Z_+$, yet the conclusion of Theorem~\ref{thm:IW:upper} fails. Necessarily, this construction shows that the weights $G$ constructed above fail to satisfy condition \ref{cond:B}; otherwise, we would reach a contradiction with Theorem~\ref{thm:IW:upper}.
\end{remark}

\section{Seminorm variant of the Ionescu--Wainger theorem} \label{sec:var}

In this section, we extend the Ionescu--Wainger multiplier theorem to the seminorm setting. The main objects will be the so-called \textit{Rademacher--Menshov seminorms}, whose properties will be axiomatized below.

\begin{definition}[Rademacher--Menshov seminorm]\label{def:RM}

Fix $p \in [1,\infty)$ and $d \in \Z_+$. Let
\[
\mathcal{S}( f_n : n\in\mathbb{I} ) = \mathcal{S}_{L^p(X;H)} ( f_n : n\in\mathbb{I} )
\]
be a given quasiseminorm defined on all sequences $(f_n : n\in\mathbb{I})$ of functions $f_n \in L^p(X;H)$, where $H$ is any  Hilbert space and $X$ is either $\Z^d$ with counting measure or $\RR^d$ with Lebesgue measure, and $\mathbb{I}$ is any subset of $\NN$. Given a pair $(H_1,H_2)$ of  Hilbert spaces, we say that $\mathcal S$ is a \emph{Rademacher--Menshov seminorm} of type $(d,p,H_1,H_2)$ if the following conditions hold.
\begin{enumerate} [label*={\rm{(\arabic*)}}, itemsep=2pt]
\item \label{conditionA} Splitting property: there exists a constant $\mathbf C_{1} \in \R_+$ such that
\begin{equation*}
\mathcal{S}
( f_n : n\in\mathbb{N})
\le \mathbf C_{1}
\big(
\mathcal{S} ( f_n : n\in\mathbb{N}_{\leq K} )
+
\mathcal{S} ( f_n : n\in\mathbb{N}_{\geq K} )
\big)
\end{equation*}
holds for all $K \in \NN$ and all functions $f_n \in \ell^p(\Z^d;H_2)$.
\item \label{conditionB} Rademacher--Menshov inequality: there exists a constant $\mathbf C_{2} \in \R_+$ such that
\begin{equation*}
\mathcal{S}
( f_n : n \in \mathbb{N}_{\leq 2^J} )
\le \mathbf C_{2}
\sum_{j \in \NN_{\leq J}} 
\Big\|
\Big(
\sum_{i \in \NN_{<2^{J-j}}}
\big\| f_{2^{j}(i+1)} - f_{2^{j}i} \big\|_{H_2}^{2}
\Big)^{1/2}
\Big\|_{\ell^p(\Z^d)}
\end{equation*}
holds for all $J\in\mathbb{N}$ and all functions $f_n \in \ell^p(\Z^d;H_2)$.    
\item \label{conditionC} Sampling principle: there exists a constant $\mathbf C_{3} \in \R_+$ such that
\begin{equation*}
\big\|
\big(T_{\mathbb{Z}^d}\big[\llbracket 1 \mid \mathfrak{m}_n\rrbracket_{\mathcal{Q}(q)}\big]\big)_{n\in\mathbb{N}}
\big\|_{
  \ell^{p}(\mathbb{Z}^d;H_1)
  \to
  \mathcal{S}_{\ell^{p}(\mathbb{Z}^d;H_2)}
}
\le
\mathbf C_{3}
\big\|
\big(T_{\mathbb{R}^d}[\mathfrak{m}_n]\big)_{n\in\mathbb{N}}
\big\|_{
  L^{p}(\mathbb{R}^d;H_1)
  \to
  \mathcal{S}_{L^{p}(\mathbb{R}^d;H_2)}}
\end{equation*}
holds for all $q\in\mathbb{Z}_+$ and all multipliers $\mathfrak{m}_n \colon \mathbb{R}^d \to L(H_1,H_2)$ supported on ${q}^{-1}{\bf Q}^d$.
\end{enumerate}
\end{definition}

\subsection{Seminorm variant of the Ionescu--Wainger multiplier  theorem}
The following result, which is the main result of this section, is a refinement of \cite[Theorem~3.32]{KMPWW}.

\begin{theorem} \label{thm:IWvar}
Fix $r, d \in \mathbb{Z}_+$ and $N \in \mathbb{N}_{\geq 10}$. Let $(\lambda_n)_{n \in \mathbb{N}}\subseteq[1, \infty)$ be a lacunary sequence such that $\lambda \coloneqq \inf_{n\in\mathbb{N}} \lambda_{n+1}/\lambda_n > 1$. Let $H_1,H_2$ be separable Hilbert spaces. For each $n\in\mathbb{N}$, let $\mathfrak m_n \colon \mathbb{R}^{d} \to L(H_1,H_2)$ be a measurable function supported on $\varepsilon_n {\bf Q}^d$, where
\begin{align} \label{conditionC-epsilon}
0 < \varepsilon_n \leq \min\{(4r N^{2r})^{-1}, \lambda_n^{-1}\}. 
\end{align}
Let $G \colon \mathbb{Q}^d \to \mathbb{C}$ be a Ionescu--Wainger weight from Definition~\ref{def:IW-weight} and $\mathcal{S}$ be a~Rademacher--Menshov seminorm of type $(d, p,H_1,H_2)$ from Definition~\ref{def:RM} with $(2r)/(2r-1) \leq p \leq 2r$.  Let
\[
C_N^r(G) \coloneqq N^{{\bf C}_{\rm IW}(r) / \log \log N}\,
\mathbf{C}_{G}(\mathcal R^d_{\leq N})
\]
be the corresponding constant from Theorem~\ref{thm:IW:upper}. Then
\begin{align*}
\big \|
\big(T_{\mathbb{Z}^d}\big[\llbracket G \mid \mathfrak{m}_n\rrbracket_{\mathcal R^d_{\le N}}\big] \big)_{n\in\mathbb{N}}&
\big \|_{
  \ell^{p}(\mathbb{Z}^d;H_1)
  \to
  \mathcal{S}_{\ell^{p}(\mathbb{Z}^d;H_2)}
}
\lesssim_{\lambda,d,r} C_N^r(G)  \, 
\mathbf C_{1}
( \mathbf C_{2} {\bf A}_{2r} \log N 
+ \mathbf C_{3}
{\bf B}_{p})
\end{align*}
holds with 
\begin{align*}
    {\bf A}_{2r} & \coloneqq \sup_{\omega \in \{-1,1\}^\mathbb{N}}
	\Big\|\sum_{n\in\mathbb{N}} \omega(n)\,
 T_{\mathbb{R}^d}[\mathfrak m_{n+1}-\mathfrak m_n]\Big\|_{L^{2r}(\mathbb{R}^d;H_1)\to L^{2r}(\mathbb{R}^d;H_2)}, \\
    {\bf B}_{p} & \coloneqq 
    \big\|\big(T_{\mathbb{R}^d}[\mathfrak m_n]\big)_{n\in\mathbb{N}}\big\|_{L^{p}(\mathbb{R}^d;H_1)\to \mathcal{S}_{L^{p}(\mathbb{R}^d;H_2)}}.
\end{align*}
\end{theorem}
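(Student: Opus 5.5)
The plan is to split the sequence of scales at the threshold where the continuous multipliers become fine enough to be sampled with period $Q$, where $Q$ is a common multiple of all denominators in $\mathcal R^d_{\le N}$. Small scales will be handled by the Rademacher--Menshov inequality and Theorem~\ref{thm:IW:upper}; large scales by the sampling principle (condition~\ref{conditionC}) together with Theorem~\ref{thm:IW:upper} applied to a fixed bump multiplier.

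\textbf{Step 1: choosing the threshold.} Set
\[
n_0 \coloneqq \min\{n \in \NN : \lambda_n \ge N^{C}\}
\]
for a suitable $C=C(r)$, for instance $C=2r+1$. Lacunarity gives $n_0 \lesssim_\lambda \log N$. By condition~\ref{conditionA}, the seminorm over $\NN$ is at most $\mathbf C_1$ times the sum of the seminorms over $\NN_{\le n_0}$ and over $\NN_{\ge n_0}$. These two pieces are treated separately.

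\textbf{Step 2: small scales $n \le n_0$.} Pad the sequence to length $2^J$ with $2^J \simeq n_0$, so that $J \lesssim \log\log N$. Apply condition~\ref{conditionB}; each dyadic level $j$ produces a square function of differences of $T_{\Z^d}[\llbracket G\mid \mathfrak m_n\rrbracket_{\mathcal R^d_{\le N}}]f$. We bound each square function by Khintchine's inequality: randomize with signs $\omega$, then apply Theorem~\ref{thm:IW:upper} to the single multiplier
\[
\sum_i \omega(i)\,\big(\mathfrak m_{2^j(i+1)}-\mathfrak m_{2^ji}\big),
\]
which is supported on $\varepsilon_0\mathbf Q^d$ with $\varepsilon_0$ satisfying \eqref{eq:support}. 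Its $L^{2r}$ multiplier norm is controlled by $\mathbf A_{2r}$: each block difference telescopes into a sum of consecutive differences, and the signs are constant on blocks, so it is a signed sum of consecutive differences. For $p<2$ we use Theorem~\ref{thm:IW:upper} at exponent $p$, with $\mathbf A$ at $2r$, since the theorem covers the range $(2r)/(2r-1)\le p\le 2r$. Summing over the $J+1\lesssim \log N$ levels gives the term $C_N^r(G)\,\mathbf C_2\,\mathbf A_{2r}\log N$. (Here one could even obtain $\log\log N$; the stated $\log N$ is generous.)

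\textbf{Step 3: large scales $n\ge n_0$.} Here $\varepsilon_n\le\lambda_n^{-1}\le N^{-C}$. Take $Q \coloneqq \mathrm{lcm}(1,\dots,N)$. Then $Q \le e^{CN}$, which is too large to use directly, so a naive sampling with period $Q$ is not available. Instead, factor each multiplier as
\[
\llbracket G\mid\mathfrak m_n\rrbracket_{\mathcal R^d_{\le N}}
=\llbracket 1\mid\mathfrak m_n\rrbracket_{\mathcal Q(Q)}\cdot\llbracket G\mid\zeta_{\varepsilon_{n_0}}\rrbracket_{\mathcal R^d_{\le N}},
\]
where $\zeta$ is the bump from Section~\ref{sec:logloglog}. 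This identity requires the pieces of $\llbracket 1\mid\mathfrak m_n\rrbracket_{\mathcal Q(Q)}$ to be disjoint and supported in $Q^{-1}\mathbf Q^d$, i.e.\ $\varepsilon_n\le (2Q)^{-1}$. Under \eqref{conditionC-epsilon} alone this fails. The fix is to choose $n_0$ so that $\lambda_{n_0}\ge 2Q$, which costs $n_0\lesssim_\lambda N$ rather than $\log N$. To keep the small-scale count at $\log N$, the second step must be refined: Khintchine plus Theorem~\ref{thm:IW:upper} applied to long dyadic blocks, with condition~\ref{conditionB} summing over $J\simeq\log n_0\lesssim\log N$ levels. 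This is exactly why the bound carries $\log N$.

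With the factorization in hand, the operator factors as the sampled sequence $T_{\Z^d}[\llbracket 1\mid\mathfrak m_n\rrbracket_{\mathcal Q(Q)}]$ composed with the fixed operator $T_{\Z^d}[\llbracket G\mid\zeta_{\varepsilon_{n_0}}\rrbracket_{\mathcal R^d_{\le N}}]$. The sampled sequence is bounded into $\mathcal S$ by $\mathbf C_3\mathbf B_p$ via condition~\ref{conditionC}. The fixed operator is bounded on $\ell^p$ by $C_N^r(G)$ via Theorem~\ref{thm:IW:upper}, since $\zeta$ is a Schwartz bump with $L^{2r}$ multiplier norm $O_d(1)$ and scale $\varepsilon_{n_0}\le(4rN^{2r})^{-1}$.

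\textbf{Main obstacle.} The hard part is the support bookkeeping in the factorization: ensuring the bumps at distinct points of $\mathcal Q(Q)$ do not overlap the bumps of $\mathcal R^d_{\le N}$ incorrectly, so that the product identity holds exactly. The plan is to take the bump width $\varepsilon_{n_0}$ much larger than $\varepsilon_n$ for $n \ge n_0$, yet below $(2Q)^{-1}$, and to check that each point of $\mathcal R^d_{\le N}$ lies in $\mathcal Q(Q)$. The $\zeta$ factor then selects exactly those points, and since $\zeta\equiv1$ on the support of $\mathfrak m_n(\cdot-\sigma)$, the identity holds.
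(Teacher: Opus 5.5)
After the correction you make in Step~3 (a threshold with $\lambda_{n_0}\gtrsim L_N$, the least common multiple of $[N]$, so that $n_0\lesssim_\lambda N$ and only $\simeq\log N$ dyadic levels remain), your argument is essentially the paper's proof. Below the threshold you use Rademacher--Menshov, Khintchine and Theorem~\ref{thm:IW:upper}. Above it you use the factorization $\llbracket G\mid\mathfrak m_n\rrbracket_{\mathcal R^d_{\le N}}=\llbracket 1\mid\mathfrak m_n\rrbracket_{\mathcal Q(L_N)}\cdot\llbracket G\mid\text{bump}\rrbracket_{\mathcal R^d_{\le N}}$ together with the sampling principle~\ref{conditionC} and Theorem~\ref{thm:IW:upper} for the fixed bump.

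Three bookkeeping slips need fixing, none of which affects the strategy.
\begin{enumerate}
\item The bump width cannot be $\varepsilon_{n_0}$. The $\varepsilon_n$ are not assumed monotone; only $\varepsilon_n\le\lambda_n^{-1}\le\lambda_{n_0}^{-1}$ is known. So the width must be tied to $\lambda_{n_0}$ instead.
\item The threshold must also ensure that the bump's full support (twice its width) satisfies \eqref{eq:support}. The condition $\lambda_{n_0}\ge 2Q$ alone does not guarantee this for small $N$ relative to $r$.
\item Split at a power of two rather than padding. Definition~\ref{def:RM} assumes no monotonicity of $\mathcal S$ in the index set, so condition~\ref{conditionB} must be applied verbatim.
\end{enumerate}
The paper handles all three at once. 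It splits at $2^{\kappa_N}$, beyond which $\lambda_n\ge e^{10rN}$, and it uses the $r$-dependent bump $\eta_{\le -5rN}$.
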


\begin{proof}[Proof of Theorem~\ref{thm:IWvar}]
Fix $f\in \ell^p(\mathbb{Z}^d;H_1)$ and set $F_n \coloneqq T_{\mathbb{Z}^d}\big[\llbracket G \mid \mathfrak{m}_n\rrbracket_{\mathcal R^d_{\le N}}\big] f$ for every $n \in \N$. Set $\kappa_N \coloneqq \lfloor \log_2 (10r (\log\lambda)^{-1}N )\rfloor + 1$. By condition~\ref{conditionA} with $K=2^{\kappa_N}$, we have
\[
\mathcal{S}_{\ell^p(\mathbb{Z}^d;H_2)}\big(F_n : n\in\mathbb{N}\big)
\le \mathbf C_{1}
\big(
\mathcal{S}_{\ell^p(\mathbb{Z}^d;H_2)} (F_n : n\in\mathbb{N}_{\leq 2^{\kappa_N}} )
+
\mathcal{S}_{\ell^p(\mathbb{Z}^d;H_2)} (F_n : n\in\mathbb{N}_{\geq 2^{\kappa_N}})
\big).
\]
Hence, it suffices to prove
\begin{align}
\label{eq:62}
\mathcal{S}_{\ell^p(\mathbb{Z}^d;H_2)} (F_n : n\in\mathbb{N}_{\leq 2^{\kappa_N}} )
&\lesssim_{\lambda, d,r}
C_N^r(G) \, \mathbf C_{2} \,
{\bf A}_{2r}\, (\log N) \, \|f\|_{\ell^p(\mathbb{Z}^d;H_1)},\\
\label{eq:63}
\mathcal{S}_{\ell^p(\mathbb{Z}^d;H_2)} (F_n : n\in\mathbb{N}_{\ge 2^{\kappa_N}} )
&\lesssim_{\lambda, d,r}
C_N^r(G) \, \mathbf C_{3} \,
{\bf B}_{p}\,
\|f\|_{\ell^p(\mathbb{Z}^d;H_1)}.
\end{align}
Theorem~\ref{thm:IWvar} follows by combining \eqref{eq:62} and \eqref{eq:63} with the splitting inequality above.

\smallskip \paragraph{\bf Step~1} 
We now prove inequality \eqref{eq:62}. By condition~\ref{conditionB} with $J={\kappa_N}$, we have
\begin{equation*}
\mathcal{S}_{L^p(\Z^d;H_2)}
( F_n : n \in \mathbb{N}_{\leq 2^{\kappa_N}} )
\le \mathbf C_{2}
\sum_{j \in \NN_{\leq {\kappa_N}}} 
\Big\|
\Big(
\sum_{i \in \NN_{<2^{{\kappa_N}-j}}}
\big\| F_{2^{j}(i+1)} - F_{2^{j}i} \big\|_{H_2}^{2}
\Big)^{1/2}
\Big\|_{\ell^p(\Z^d)}.
\end{equation*}
For each $j \in \NN_{\leq {\kappa_N}}$, by Khinchine's inequality, the corresponding summand is controlled by
\[
\bigg(
\int_0^1
\Big\|
\sum_{i \in \NN_{<2^{{\kappa_N}-j}}}
\omega_i(t)
(F_{2^j(i+1)}
 - F_{2^ji})
\Big\|_{\ell^p(\mathbb{Z}^d;H_2)}^p \, {\rm d}t
\bigg)^{1/p},
\]
where $(\omega_i)_{i \in \mathbb{N}}$ is a sequence of Rademacher functions. This leads to the bound 
\[ 
\sum_{j \in \NN_{\leq \kappa_N} } \bigg(\int_0^1 \big(
C_N^r(G)  \, {\bf A}_{2r} \, \|f\|_{L^p(\Z^d;H_1)} \big)^{p} \, {\rm d}t \bigg)^{1/p}, 
\] 
thanks to Theorem~\ref{thm:IW:upper}. Then \eqref{eq:62} follows, since $\kappa_N + 1 \simeq_{\lambda, r} \log N$.

\smallskip\paragraph{\bf Step~2} 
We now prove inequality \eqref{eq:63}. Let $\eta \colon \mathbb{R}\to\mathbb{R}$ be smooth, even, and such that $\ind{{\bf Q}}\le\eta\le \ind{2{\bf Q}}$. For each $m\in\mathbb{Z}$ and $\xi\in\R$ we set
\[
\eta_{\le m}(\xi) \coloneqq \eta(2^{-m}\xi).
\]
Let $L_N \in \mathbb{Z}_+$ be the least common multiple of the set $[N]$. If $n\ge 2^{\kappa_N}$, then
\[
\lambda_n\ge \lambda^n = e^{n \log \lambda} \ge e^{10rN} \geq 2^{5rN+5}\ge 10 L_N,
\]
by the well-known fact that $L_N \le 3^N$. Thus, we obtain the factorization
\begin{align*}
\llbracket G \mid \mathfrak{m}_n\rrbracket_{\mathcal R^d_{\le N}} 
=\llbracket 1 \mid \mathfrak{m}_n\rrbracket_{\mathcal Q(L_N)} 
\cdot \llbracket G \mid \eta_{\le -5rN}\rrbracket_{\mathcal R^d_{\le N}}.
\end{align*}
Indeed, $\lambda_n\ge 2^{5rN+5}$ implies that $\eta_{\le -5rN}$ equals $1$ on the support of $\mathfrak m_n$, while $2^{5rN+5}\ge 10 L_N$ implies that the pieces corresponding to different fractions do not overlap. The second factor defines an operator on $\ell^{p}(\mathbb{Z}^d;H_1)$ whose norm is controlled by $C_N^r(G)$, thanks to Theorem~\ref{thm:IW:upper}. For the first factor, by \eqref{conditionC-epsilon}, we can use condition~\ref{conditionC}. This gives 
\begin{align*} 
\big\| \big(T_{\mathbb{Z}^d} \big[ \llbracket G \mid \mathfrak{m}_n\rrbracket_{\mathcal R^d_{\le N}} \big] \big)_{n\in\mathbb{N}_{\geq 2^{\kappa_N}}} \big\|_{\ell^p(\mathbb{Z}^d;H_1)\to \mathcal{S}_{\ell^p(\mathbb{Z}^d;H_2)}} 
\lesssim_{d,r} \mathbf C_{3} \, {\bf B}_{p}
\cdot C_N^r(G). 
\end{align*}
Thus, \eqref{eq:63} holds and the proof is complete.
\end{proof}

\subsection{Examples of Rademacher--Menshov seminorms}
Fix a Hilbert space $H$ and parameters $\rho \in [1,\infty]$ and $\lambda \in \R_+$. For $J \in \Z_+$ and $\mathbb I \subseteq \NN$, let $\mathfrak S_J^{\mathbb I}$ be the space of all increasing sequences $({\mathfrak o}_j)_{j \in \NN_{\leq J}}$ of numbers ${\mathfrak o}_j \in {\mathbb I}$. Let $(h_n : n \in \NN)$ be a sequence of elements of $H$. 

Given $({\mathfrak o}_j)_{j \in \NN_{\leq J}} \in \mathfrak S_J^{\mathbb I}$, the \emph{$\rho$-oscillation seminorm} of $(h_n : n\in \mathbb I)$ is defined by
\begin{align*}
O^{\rho}_{H; ({\mathfrak o}_j)_{j \in \NN_{\leq J}}}(h_n : n\in \mathbb I)
\coloneqq 
\begin{cases}
\Big(\displaystyle\sum_{j \in \NN_{<J}}  
\displaystyle\sup_{n\in[{\mathfrak o}_j, {\mathfrak o}_{j+1}) \, \cap \, \mathbb I}
\|h_n-h_{{\mathfrak o}_{j}} \|_{H}^{\rho} \Big)^{1/\rho},  
& \rho \in \R_+,\\[1.5ex]
\displaystyle \sup_{j \in \NN_{<J}} \, \sup_{n\in[{\mathfrak o}_j, {\mathfrak o}_{j+1}) \, \cap \, \mathbb I}
\|h_n-h_{{\mathfrak o}_{j}} \|_{H},
& \rho = \infty.
\end{cases}
\end{align*}
Similarly, the \emph{$\rho$-variation seminorm} of $(h_n : n\in \mathbb I)$ is defined by
\begin{align*}
V_H^{\rho}(h_n : n\in\mathbb I)
\coloneqq 
\begin{cases}
\displaystyle\sup_{J\in\mathbb{N}}
\, \sup_{({\mathfrak n}_j)_{j \in \NN_{\leq J}} \in \mathfrak S_J^{\mathbb I}}
\Big(\displaystyle \sum_{j \in \NN_{< J}}  \|h_{{\mathfrak n}_{j+1}}-h_{{\mathfrak n}_{j}}\|_{H}^{\rho} \Big)^{1/\rho},  
&
\rho \in \R_+, \\[1.5ex]
\displaystyle \sup_{({\mathfrak n}_j)_{j \in \NN_{\leq 1}} \in \mathfrak S_1^{\mathbb I}}
\|h_{{\mathfrak n}_{1}}-h_{{\mathfrak n}_{0}}\|_{H},  
& \rho = \infty.
\end{cases}
\end{align*}
Finally, the \emph{$\lambda$-jump counting function} of $(h_n : n\in \mathbb I)$ is defined by
\begin{align*} 
N_{H; \lambda} (h_n : n\in \mathbb I) \coloneqq \sup \big\{J\in \Z_+ : 
\min_{j \in \NN_{< J}} \|h_{{\mathfrak n}_{j+1}}-h_{{\mathfrak n}_{j}}\|_{H} \geq \lambda \text{ for some } ({\mathfrak n}_j)_{j \in \NN_{\leq J}} \in \mathfrak S_J^{\mathbb I} \big\}.
\end{align*}
Here we use the convention that the supremum over the empty set equals $0$. If $H=\C$, then we abbreviate $O^{\rho}_{H; ({\mathfrak o}_j)_{j \in \NN_{\leq J}}}, V_{H}^{\rho}$, and $N_{H; \lambda}$ to $O^{\rho}_{({\mathfrak o}_j)_{j \in \NN_{\leq J}}}, V^{\rho}$, and $N_{\lambda}$, respectively. 

The above objects give rise to quasiseminorms on $L^p$ spaces. Let $(X, \mathcal{B}, \mathfrak m)$ be a $\sigma$-finite measure space and fix $p \in \R_+$. Let $(f_n : n \in \NN)$ be a sequence of functions $f_n \in L^p(X;H)$.

The \emph{$\rho$-oscillation seminorm} of $(f_n : n \in \mathbb I)$ is defined by
\begin{align}
\label{eq:osc-Lp}
  \begin{split}
\mathcal{O}_{H; X}^{\rho, p} (f_n : n \in \mathbb I) 
\coloneqq \sup_{J\in\mathbb{Z}_+} \, \sup_{({\mathfrak o}_j)_{j \in \NN_{\leq J}} \in \mathfrak S_J^{\mathbb I}}
\big\| O^{\rho}_{H; ({\mathfrak o}_j)_{j \in \NN_{\leq J}}}(f_n : n\in \mathbb I)\big\|_{L^{p}(X)}.
  \end{split}
\end{align}
Similarly, the \emph{$\rho$-variation seminorm} of $(f_n : n \in \mathbb I)$ is defined by
\begin{align}
\label{eq:var-Lp}
  \begin{split}
\mathcal{V}_{H; X}^{\rho, p}(f_n : n\in \mathbb I) 
\coloneqq \big\| V_H^{\rho}(f_n : n\in \mathbb I)\big\|_{L^{p}(X)}.
  \end{split}
\end{align}
Finally, the \emph{$\rho$-jump quasiseminorm} of $(f_n : n \in \mathbb I)$ is defined by
\begin{align}
 \label{eq:jump-Lp}
  \begin{split}
  \mathcal{J}_{H; X}^{\rho, p}(f_n : n\in \mathbb I) \coloneqq \sup_{\lambda \in \RR_+}
  \big\| \lambda \, N_{H; \lambda}(f_n : n\in \mathbb I)^{1/\rho} \big\|_{L^{p}(X)}.
  \end{split}
\end{align}
As before, we use the convention that the supremum over the empty set equals $0$. If $H=\C$, then we abbreviate $\mathcal{O}_{H; X}^{\rho, p}, \mathcal{V}_{H; X}^{\rho, p}$, and $ \mathcal{J}_{H; X}^{\rho, p}$ to $\mathcal{O}_{X}^{\rho, p}, \mathcal{V}_{X}^{\rho, p}$, and $ \mathcal{J}_{X}^{\rho, p}$, respectively. 
\begin{remark}
\label{rem:1}
The first two objects \eqref{eq:osc-Lp} and \eqref{eq:var-Lp} define seminorms (i.e. they are homogeneous and subadditive), while the third one \eqref{eq:jump-Lp}, as it stands, defines only a quasiseminorm (i.e. it is homogeneous and only satisfies the triangle inequality with some constant bigger than one). However, this is not an issue and, as shown in \cite[Corollary~2.2, p.~805]{MSZ1}, there exists a (subadditive) seminorm equivalent to the jump quasiseminorms from \eqref{eq:jump-Lp}.
\end{remark}

We now formulate the following result, which has important practical significance. 

\begin{proposition}
Fix $p \in [1,\infty)$ and $d \in \Z_+$. Let $H_1, H_2$ be  separable Hilbert spaces. If $\rho \in [2,\infty]$, then each of \eqref{eq:osc-Lp}--\eqref{eq:jump-Lp} is a~Rademacher--Menshov seminorm of type $(d, p,H_1,H_2)$. 
\end{proposition}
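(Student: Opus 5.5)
We verify the three conditions of Definition~\ref{def:RM} separately for each of the oscillation, variation, and jump seminorms \eqref{eq:osc-Lp}--\eqref{eq:jump-Lp}, working pointwise in $x$ wherever possible. By Remark~\ref{rem:1}, we may replace the jump quasiseminorm by an equivalent genuine seminorm; the constants $\mathbf C_1,\mathbf C_2,\mathbf C_3$ then change only by absolute factors.

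\emph{Splitting property.} We argue pointwise. For variation, a partition $\mathfrak n_0<\dots<\mathfrak n_J$ of $\NN$ splits into the points $\le K$ and the points $\ge K$, with at most one interval straddling $K$. Inserting the point $K$ and using the triangle inequality gives
\[
V^\rho(h_n:n\in\NN)\le 2^{1-1/\rho}\big(V^\rho(h_n:n\le K)+V^\rho(h_n:n\ge K)\big).
\]
For oscillation, fix $(\mathfrak o_j)$ and let $j_0$ be the unique block containing $K$. In that block we write $h_n-h_{\mathfrak o_{j_0}}=(h_n-h_K)+(h_K-h_{\mathfrak o_{j_0}})$. The first term is controlled by the oscillation over $\NN_{\ge K}$ computed with the sequence $(\mathfrak o_j)$ into which $K$ has been inserted. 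The second term is controlled by the oscillation over $\NN_{\le K}$ computed with the truncated sequence followed by $K$. Taking $L^p$ norms and the supremum over $(\mathfrak o_j)$ gives the claim. For jumps, $N_\lambda$ over $\NN$ is at most $N_{\lambda/2}(\le K)+N_{\lambda/2}(\ge K)+1$, since a straddling jump of size $\ge\lambda$ produces a jump of size $\ge\lambda/2$ on one side. The extra $+1$ term is absorbed by $\lambda N_{\lambda/2}^{1/\rho}$ on that side, and the rest is handled with quasi-subadditivity of $t\mapsto t^{1/\rho}$.

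\emph{Rademacher--Menshov inequality.} This is the classical argument. For $\rho\ge2$ we have $O^\rho\lesssim V^\rho\le V^2$ pointwise; the bound $O^\rho\lesssim V^\rho$ holds because each block supremum is at most one variation term. For jumps, $\lambda N_\lambda^{1/2}\le V^2$, and the equivalent-seminorm version is controlled the same way. It therefore suffices to prove the dyadic bound
\[
V^2(h_n:n\le 2^J)\le \sqrt2\sum_{j\le J}\Big(\sum_{i<2^{J-j}}\|h_{2^j(i+1)}-h_{2^ji}\|^2\Big)^{1/2}
\]
pointwise, and then take $L^p$ norms. To prove it, we decompose each difference $h_{\mathfrak n_{k+1}}-h_{\mathfrak n_k}$ along the binary expansions of $\mathfrak n_k$ and $\mathfrak n_{k+1}$. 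This writes the difference as a sum of dyadic differences, using at most two intervals per scale $j$, and each dyadic interval $[2^ji,2^j(i+1)]$ is used by at most one $k$. We then apply Minkowski's inequality in $\ell^2(k)$ scale by scale. This is the standard Lewko--Lewko / Rademacher--Menshov lemma, and it requires only routine bookkeeping.

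\emph{Sampling principle.} This is the main obstacle, because Theorem~\ref{prop:msw} is stated for finite-dimensional Banach spaces $B_1,B_2$ and a single operator, while here the target space is a seminorm-valued space. Our plan is to linearize the seminorm so that Theorem~\ref{prop:msw} applies. First, by monotone convergence, we restrict to finitely many indices $n\le M$ and assume $H_1,H_2$ are finite-dimensional; this is the same limiting argument as reduction (iv). Then $V^\rho_{H_2}(h_n:n\le M)$ is a norm on the finite-dimensional space $B_2=H_2^{M}$, with the exception that it vanishes on constant sequences. We quotient out the constants, or add $\|h_0\|$ and control that term separately by Theorem~\ref{prop:msw} applied to the single operator $T[\mathfrak m_0]$. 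The multiplier $(\mathfrak m_n)_{n\le M}$ is then a single $L(H_1,B_2)$-valued symbol supported on $q^{-1}\mathbf Q^d$, so Theorem~\ref{prop:msw} yields condition~\ref{conditionC} with $\mathbf C_3={\bf C}_{\rm MSW}(d)$.

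The same argument applies to the oscillation seminorm for each fixed choice of $(\mathfrak o_j)$, since then $O^\rho$ is a norm on $H_2^M$ modulo constants; we take the supremum over $(\mathfrak o_j)$ afterwards. For jumps, we use the equivalent seminorm from \cite[Corollary~2.2]{MSZ1}. That paper realizes the jump seminorm as a real interpolation space between variation spaces, and the cited extension of Theorem~\ref{prop:msw} to real interpolation spaces in \cite{MSZ1} then gives the sampling principle for the jump seminorm as well.
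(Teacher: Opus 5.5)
Your architecture matches the paper's. You check condition~\ref{conditionA} directly, get condition~\ref{conditionB} from the pointwise domination by $V^2_{H_2}$ plus the dyadic Rademacher--Menshov lemma, and get condition~\ref{conditionC} from Theorem~\ref{prop:msw} after a finite-dimensional reduction and passage to quotient norms, with the real-interpolation sampling result for jumps. One step fails, however: the splitting property for the oscillation seminorm \eqref{eq:osc-Lp}.

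You claim that $\|h_K-h_{\mathfrak o_{j_0}}\|_H$ is controlled by the oscillation over $\NN_{\le K}$ computed with $(\mathfrak o_0,\dots,\mathfrak o_{j_0},K)$. The last block of that sequence is $[\mathfrak o_{j_0},K)$, which excludes $K$. More generally, blocks are half-open and $\mathfrak o_J\in\mathbb I$, so the value $h_{\max\mathbb I}$ never enters $O^\rho_{H;(\mathfrak o_j)}(h_n:n\in\mathbb I)$.

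No other argument can rescue this step. Take $K=1$, $f_0=0$, and $f_n=g\neq0$ for $n\ge1$. Then $\mathcal O(f_n:n\in\NN_{\le1})=0$, since only $f_0$ is ever seen, and $\mathcal O(f_n:n\in\NN_{\ge1})=0$, since the sequence is constant there. But the choice $(\mathfrak o_0,\mathfrak o_1)=(0,2)$ gives $\mathcal O(f_n:n\in\NN)\ge\|g\|_{L^p(X;H_2)}$. So condition~\ref{conditionA}, read literally, is false for \eqref{eq:osc-Lp}. The paper's ``it is clear'' passes over the same endpoint issue.

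Your argument does prove a weaker splitting once the truncated sequence ends at $K+1$ instead of $K$. Its last block $[\mathfrak o_{j_0},K+1)$ then contains $K$. For $n>K$ in the straddling block you use your decomposition through $h_K$; for $n\le K$ the truncated last block controls the term directly. This gives
\[
\mathcal O(f_n:n\in\NN)\le 2\big(\mathcal O(f_n:n\in\NN_{\le K+1})+\mathcal O(f_n:n\in\NN_{\ge K})\big).
\]
This weaker form suffices for Theorem~\ref{thm:IWvar}. The oscillation seminorm is monotone in $\mathbb I$, so in Step~1 one can apply condition~\ref{conditionB} with $J=\kappa_N+1$. So the convention in condition~\ref{conditionA}, or in \eqref{eq:osc-Lp}, has to be adjusted. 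Your variation and jump splittings are fine as written.

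There are also two smaller slips in your sampling step. First, for fixed $(\mathfrak o_j)$ the kernel of $O^\rho$ is all sequences constant on each block, not just the constant sequences. You should quotient by that kernel, which works because any seminorm on a finite-dimensional space is a norm on the quotient by its kernel. Second, your alternative of adding $\|h_0\|$ and treating $T[\mathfrak m_0]$ separately leaves a term $\|T_{\R^d}[\mathfrak m_0]\|$ that is not on the right-hand side of condition~\ref{conditionC}. The quotient avoids this, as does replacing $\mathfrak m_n$ by $\mathfrak m_n-\mathfrak m_0$.
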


\begin{proof}
It is clear that each of \eqref{eq:osc-Lp}--\eqref{eq:jump-Lp} satisfies condition~\ref{conditionA}. Moreover, by $\rho \in[2,\infty]$, each of \eqref{eq:osc-Lp}--\eqref{eq:jump-Lp} is dominated by $\mathcal{V}_{H_2; X}^{2,p}$. Thus, condition~\ref{conditionB} follows from the Rademacher--Menshov inequality for $\mathcal{V}_{H_2; X}^{2,p}$, see \cite[Lemma~2.5, p.~534]{MSZ2}. For \eqref{eq:osc-Lp} and \eqref{eq:var-Lp}, condition~\ref{conditionC} holds by Theorem~\ref{prop:msw} after passing to suitable quotient spaces on which these seminorms become genuine norms and invoking a limiting argument to approximate $H_1, H_2$ by finite-dimensional subspaces. For \eqref{eq:jump-Lp}, condition~\ref{conditionC} was verified in \cite[Theorem~1.3, p.~800]{MSZ1}.
\end{proof}

\begin{remark}
\label{rem:2}
We close this section by emphasizing that each object from \eqref{eq:osc-Lp}--\eqref{eq:jump-Lp} has a certain splitting property along long and short times. Namely, for a sequence $(f_n : n \in \NN)$ of functions $f_n \in L^p(X;H)$ with $p \in [1,\infty)$, if $\mathcal{S}_{L^p(X;H)}^{\rho} ( f_n : n\in\mathbb{I} )$ with $\rho\in[2, \infty]$ is one of the seminorms from \eqref{eq:osc-Lp}--\eqref{eq:jump-Lp}, then we can bound it from above by a constant multiple of
\begin{align}
\label{eq:5}
\mathcal{S}_{L^p(X;H)}^{\rho} ( f_{2^n} : n\in\mathbb{N} )
+
\bigg\|\Big(\sum_{k\in\N}V_H^{2}\big(f_n : n\in\mathbb I\cap[2^k, 2^{k+1}]\big)^2\Big)^{1/2}\bigg\|_{L^p(X)}.
\end{align}
Applying such a bound is referred to as the so-called splitting into long and short jumps, see \cite[Lemma~1.3, p.~6716]{JSW} for details. While in practical problems the first term in \eqref{eq:5} can be handled by Theorem~\ref{thm:IWvar}, the other term can be handled by square function techniques. Namely, using \cite[Lemma~2.5, p.~534]{MSZ2}, we bound it from above by a constant multiple of
\begin{align}
\label{eq:6}
\sum_{l\in\N}\bigg\|\Big(\sum_{k\in\N}\sum_{m\in\N_{<2^l}}\|f_{2^k+2^{k-l}(m+1)}-f_{2^k+2^{k-l}m}\|_H^2\Big)^{1/2}\bigg\|_{L^p(X)}.
\end{align}
Now it is clear that \eqref{eq:6} falls into a square function estimate paradigm and to control the corresponding norm, we will employ Theorem~\ref{thm:IW:upper} as long as a  gain summable in $l\in\N$ is expected. In the vast majority of interesting situations, this is the case.
\end{remark}

\section{Bourgain's ergodic theorem}
\label{sec:B}
In the final section, we demonstrate how Theorem~\ref{thm:IW:upper} and Theorem~\ref{thm:IWvar} can be used in practice. Specifically, we will give a short proof of Bourgain's pointwise ergodic theorem with polynomial iterates. 

\begin{theorem}[Bourgain's ergodic theorem]
\label{bourgain}
Let $(X,\mathcal B(X), \mu)$ be a $\sigma$-finite measure space equipped with an invertible measure-preserving transformation $T \colon X\to X$, and $P\in \mathbb Z[{\rm n}]$ be a polynomial with integer coefficients. Then, for every $p\in(1, \infty)$ and every $f\in L^p(X)$, the polynomial ergodic averages
\begin{align}
\label{eq:7}
A_{N;X, T}^{P}f(x) \coloneqq \frac{1}{N}\sum_{n\in[N]}f(T^{P(n)}x), \qquad x\in X,
\end{align}
where $N\in\Z_+$, converge $\mu$-almost everywhere on $X$ and in $L^p(X)$ norm as $N\to\infty$. If, in addition, $\mu(X)=1$ and $T$ is totally ergodic (i.e. $T^n$ is ergodic for every $n\in \Z_+$), then
\begin{align}
\label{eq:12}
\lim_{N\to \infty}A_{N;X, T}^{P}f(x)=\int_Xf(t)\,{\rm d}\mu(t)
\end{align}
holds $\mu$-almost everywhere on $X$ and in $L^p(X)$ norm.
\end{theorem}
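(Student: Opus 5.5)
The plan is to deduce Theorem~\ref{bourgain} from a quantitative oscillation (or $\rho$-variation, $\rho>2$) estimate for the integer-shift model $A^P_{N;\Z}$ from \eqref{eq:36}. By Calder\'on's transference principle it suffices to prove
\[
\sup_{J}\sup_{(\mathfrak o_j)\in\mathfrak S_J^{\N}}\big\|O^{2}_{(\mathfrak o_j)}(A^P_{N;\Z}f : N\in\N)\big\|_{\ell^p(\Z)}\lesssim_{p,P}\|f\|_{\ell^p(\Z)},\qquad p\in(1,\infty),
\]
since a uniform oscillation bound forbids a positive-measure set of non-convergence. Norm convergence then follows from pointwise convergence and the maximal bound via dominated convergence. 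For the identification of the limit under total ergodicity, one uses the spectral theorem on $L^2$: the limit of $m_N(\xi)$ is $G(a/q)$ at rationals $\xi=a/q$ (and $0$ at irrationals), and total ergodicity kills all rational eigenvalues other than $0$. Hence the $L^2$ limit is $\int f$, and density extends this to $L^p$.

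For the $\ell^p(\Z)$ estimate, first reduce to lacunary times $N\in\{2^n\}$ via the long/short splitting of Remark~\ref{rem:2}; the short-variation piece is handled by the square function \eqref{eq:6} together with the same approximation below. On the lacunary sequence I use the circle method decomposition $m_{2^n}=\mathfrak a_{2^n}+E^*_{2^n}$ from \eqref{eq:41}, but I regroup dyadically in denominator size. Write
\[
m_{2^n}=\sum_{s\ge0}\llbracket G\mid \mathfrak m_{2^n}\eta_{n,s}\rrbracket_{\mathcal R_{\le 2^{s+1}}\setminus\mathcal R_{\le 2^s}}+\text{error},
\]
where $\eta_{n,s}$ localizes to scale $2^{-n\deg P+\delta n}$ for $2^s\le 2^{\delta n}$. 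For fixed $s$, the scales $n$ with $2^{\delta n}\ge 2^s$ are handled by Theorem~\ref{thm:IWvar} with $N=2^{s+1}$ and the Ionescu--Wainger weight $G$ (Example~\ref{ex:1}\ref{itemc}). The continuous seminorm bound $\mathbf B_p$ is Bourgain's oscillation inequality for $\mathfrak m_N$ on $\R$, and the $\mathbf A_{2r}$ term follows from standard Littlewood--Paley bounds for the continuous multipliers. This gives a bound of $2^{s\gamma}\,\mathbf C_G(\mathcal R_{\le 2^{s+1}})$ for any $\gamma>0$, including the $\log N\simeq s$ loss.

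The key gain is that on $\ell^2$ the $s$-th piece is $O(2^{-cs})$, by the Gauss sum decay $|G(a/q)|\lesssim q^{-1/\deg P}$ and Plancherel with disjoint supports. Interpolating between $p$ near $1$ (or near $\infty$), where the bound is $2^{s\gamma}$, and $p=2$, where it is $2^{-cs}$, gives geometric decay $2^{-c_ps}$ for $p$ in any fixed compact subinterval of $(1,\infty)$. Summing over $s$ then closes. The error terms $E^*$ satisfy $\|E^*_{2^n}\|_{L^\infty}\lesssim 2^{-\gamma n}$; combining this with trivial $\ell^1$ bounds and interpolation gives summable $\ell^p$ contributions, which are controlled by a square function.

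The main obstacle is making the interpolation legitimate. The weighted piece with $\mathbf C_G$ bounded by $U_G^2\simeq1$ on all of $\mathcal R_{\le 2^{s+1}}$ does not itself decay. So one must either apply Theorem~\ref{thm:IW:upper} with $\mathbf C_G(\Sigma)$ restricted to denominators in $(2^s,2^{s+1}]$, so that $\|G\|_{\ell^\infty(\Sigma)}\lesssim2^{-s/\deg P}$, which is exactly why the definition \eqref{eq:42} involves $\|G\|_{\ell^\infty(\Sigma)}$, or interpolate against the $\ell^2$ decay. The first option needs Theorem~\ref{thm:IW:upper} for the set $\mathcal R_{\le 2^{s+1}}\setminus\mathcal R_{\le 2^s}$. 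I would obtain this by writing that set as a difference of two canonical sets, applying the theorem to each with the weight $G$, and noting that only $\|G\|_{\ell^\infty}$ over the relevant fractions enters the final Step~5 estimate of the proof. Some care is needed there, and this is the step I would check most carefully.
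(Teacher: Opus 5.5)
Your architecture matches the paper's: transference, Weyl on the minor arcs, dyadic denominator shells on the major arcs, Theorem~\ref{thm:IWvar} with the Gauss-sum weight giving a $2^{\gamma s}$ loss at $p_0$, Gauss-sum decay on $\ell^2$, and interpolation. However, two steps fail as written.

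First, the route you commit to at the crux cannot work. You propose a shell version of Theorem~\ref{thm:IW:upper} with constant $U_G\|G\|_{\ell^\infty(\Sigma_s)}$, and this is false for every $p\neq 2$. Take $P(n)=n^2$ and a prime $q\in(2^s,2^{s+1}]$. Compose the shell operator with $T_{\Z}[\llbracket 1\mid\varphi_{\varepsilon}\rrbracket_{\mathcal Q(q)}]$, which is uniformly bounded by Theorem~\ref{prop:msw}; this reduces the shell to $\mathcal R(q)$, since $0\notin\Sigma_s$. Then run Steps~1--3 of Section~\ref{sec:lower} with the weight $G$. Here $h_n=\sum_{u\in\mathcal R(q)}G(u)e(nu)$ is a Legendre symbol, so $|h_n|=1$ for $q\nmid n$, and the ratio in \eqref{eq:lower bound 1} is $\simeq q^{1/p-1}$. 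This exceeds $q^{-1/2+\gamma}\simeq 2^{s\gamma}\|G\|_{\ell^\infty(\Sigma_s)}$ by a power of $q$ once $\gamma<1/p-1/2$; the case $p>2$ follows by duality.

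Your reading of Step~5 is also off. After M\"obius inversion, condition~\ref{cond:B} is used on all of $\mathcal Q(b)$. The $\ell^\infty$ norm of $G$ is taken over the cofactors $u'\in\mathcal I'$, which can have tiny denominators (even $u'=0$), so no decay in $s$ can enter.

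The viable option is interpolation against $\ell^2$. But ``Plancherel with disjoint supports'' only gives $\ell^2$ decay for a single operator, not for a variation or oscillation seminorm of the family. Theorem~\ref{thm:IWvar} at $p=2$ does not help either, since it returns $\mathbf C_G(\mathcal R_{\le N})\ge U_G$. The paper's device is the factorization \eqref{eq:28}. The weight is moved into the $N$-independent operator $T_{\Z}[\llbracket G\mid\eta_{\le -dup_0}\rrbracket_{\Sigma_l}]$, whose $\ell^2$ norm is $\lesssim 2^{-cl}$ by \eqref{eq:29}. The $N$-dependent factor is unweighted and costs only $2^{\gamma l}$ by Theorem~\ref{thm:IWvar}. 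This gives \eqref{eq:30}, which is then interpolated with \eqref{eq:27}.

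Second, the error term. The trivial bounds for $T_{\Z}[\mathfrak a_{2^n}]$ (kernel $\ell^1$ norms, or summing over the $\simeq 2^{2\delta n}$ fractions) are a positive power of $2^{\delta n}$. Weyl's inequality gives only $\|E^*_{2^n}\|_{L^\infty}\lesssim 2^{-c(d)\delta n}$ with $c(d)$ small. Interpolating these gives decay only when $|1/p-1/2|$ is below a threshold depending on $d$ but not on $\delta$, i.e.\ only for $p$ near $2$. Reaching all $p\in(1,\infty)$ needs a sub-power $\ell^{p_0}$ bound on the major-arc part, which is exactly what the Ionescu--Wainger theorem supplies. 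In the paper, \eqref{eq:18} (via \eqref{eq:116}) is interpolated with \eqref{eq:20} to give \eqref{eq:21}. Your own shell-by-shell use of Theorem~\ref{thm:IW:upper}, or a single application with $N=2^{\delta n}$, provides this bound, so the fix is at hand. ``Trivial $\ell^1$ bounds'' will not do.

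With these two repairs the argument becomes the paper's. Your long/short splitting in place of the positivity argument, and the spectral-theorem identification of the limit under total ergodicity (which the paper leaves implicit), are fine.
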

Theorem~\ref{bourgain} was proved by Bourgain in a series of groundbreaking papers \cite{B1, B2, B3}, giving an affirmative answer to the so-called Bellow--Furstenberg problem, see \cite{Bel} and \cite{F}. In fact, mean convergence for \eqref{eq:12} was established by Furstenberg \cite{Fur2} long before Bourgain's papers \cite{B1, B2, B3}. Theorem~\ref{bourgain} is a far-reaching generalization of Birkhoff's pointwise ergodic theorem \cite{BI} and von Neumann's mean ergodic theorem \cite{vN}. However, in contrast to Birkhoff's result, almost everywhere convergence of \eqref{eq:7} may fail for integrable functions $f\in L^1(X)$ when $\deg P \ge 2$, as shown by Buczolich and Mauldin in \cite{BM} for $P(n)=n^2$ and by LaVictoire in \cite{LaV1} for $P(n)=n^k$ with any integer $k\ge 2$. More about this theorem and the ergodic context can be found in the second author's survey \cite{M-ICM} as well as in \cite{RW}. 

Here we prove a quantitative version of Theorem~\ref{bourgain}, which reads as follows.

\begin{theorem}
\label{bourgain1}
Let $(X,\mathcal B(X), \mu)$ be a $\sigma$-finite measure space equipped with an invertible measure-preserving transformation $T \colon X\to X$, and $P\in \mathbb Z[{\rm n}]$ be a polynomial with integer coefficients. Define $\mathbb D_{\tau} \coloneqq \{\tau^n:n\in\N\}$ for any $\tau\in(1, \infty)$. Let $A_{N;X, T}^{P}$ be the ergodic average defined in \eqref{eq:7} with $N\in\Z_+$. Then the following statements hold.

\begin{itemize}
\item[\rm{(i)}] ($\rho$-variation ergodic theorem) For every $p\in(1, \infty)$ and $\rho\in(2, \infty)$ and $\tau\in(1, \infty)$, there exists $C_{p, \rho, \tau, P}\in\RR_+$ such that, for every $f\in L^p(X)$, we have 
\begin{align}
\label{eq:4}
\mathcal{V}_{X}^{\rho, p}(A_{N;X, T}^{P}f: N\in \mathbb D_{\tau}) \le C_{p, \rho, \tau, P}\|f\|_{L^p(X)}.
\end{align}
\item[\rm{(ii)}] ($2$-jump ergodic theorem) For every $p\in(1, \infty)$ and
$\tau\in(1, \infty)$, there exists $C_{p, \tau, P}\in\RR_+$ such that, for every $f\in L^p(X)$, we have 
\begin{align}
\label{eq:8}
  \mathcal{J}_{X}^{2, p}(A_{N;X, T}^{P}f: N\in \mathbb D_{\tau}) \le C_{p, \tau, P}\|f\|_{L^p(X)}.
\end{align}

\item[\rm{(iii)}] ($2$-oscillation ergodic theorem) For every $p\in(1, \infty)$ and $\tau\in(1, \infty)$, there exists $C_{p, \tau, P}\in\RR_+$ such that, for every $f\in L^p(X)$, we have 
\begin{align}
\label{eq:9}
\mathcal{O}_{X}^{2, p} (A_{N;X, T}^{P}f: N\in \mathbb D_{\tau}) \le C_{p,  \tau, P}\|f\|_{L^p(X)}.
\end{align}
\item[\rm{(iv)}] (Square function estimates) For every $p\in(1, \infty)$, there exist $c_p\in(0, 1)$ and $C_{p, P}\in\RR_+$ such that, for every $l\in\N$ and for every $f\in L^p(X)$, we have
\begin{align}
\label{eq:10}
\bigg\|\Big(\sum_{k\in\N}\sum_{m\in\N_{<2^l}}|\Delta_m A_{2^k+2^{k-l}m;X, T}^{P}f|^2\Big)^{1/2}\bigg\|_{L^p(X)}\le C_{p, P}2^{-c_pl}\|f\|_{L^p(X)},
\end{align}
where $\Delta_m A_{2^k+2^{k-l}m;X, T}^{P}\coloneqq A_{2^k+2^{k-l}(m+1);X, T}^{P}-A_{2^k+2^{k-l}m;X, T}^{P}$ for $m\in\N$.
\end{itemize}
\end{theorem}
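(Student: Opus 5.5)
By the Calder\'on transference principle it suffices to prove all four estimates for the integer shift system, i.e.\ for $A_{N;\Z}^{P}$ acting on $\ell^p(\Z)$. We also reduce (i)--(iii) to (iv) plus long-jump estimates, via Remark~\ref{rem:2}. Since $\mathbb D_\tau$ is lacunary, the oscillation, variation and jump seminorms over $\mathbb D_\tau$ split, by \eqref{eq:5}, into:
\begin{itemize}
\item a long-jump part along a dyadic-type subsequence;
\item a short-jump part bounded by \eqref{eq:6}.
\end{itemize}
For $\tau\neq 2$ one first replaces $\mathbb D_\tau$ by a union of finitely many dyadic-type sequences. The short-jump part is then exactly the square function \eqref{eq:10} summed over $l$, and the decay $2^{-c_pl}$ makes the sum converge. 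So the core is: (a) the square function bound (iv), and (b) the long-jump seminorm bound for $(A^P_{2^k;\Z}f)_k$ in each of the three seminorms.

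For both, we approximate the multiplier using Bourgain's decomposition. Write $m_N=\sum_{s\ge0}\mathfrak a_N^s+E_N$, where $\mathfrak a_N^s$ is the dyadic piece
\[
\llbracket G\mid \mathfrak m_N\,\eta_s\rrbracket_{\mathcal R_{\le 2^{s}}\setminus\mathcal R_{\le 2^{s-1}}},
\]
with $G(a/q)=q^{-1}\sum_{n\in[q]}e(P(n)a/q)$ and cut-off $\eta_s(\xi)=\eta(2^{Cs}\xi)$. The pieces are only taken for $2^s\le N^\delta$. The error satisfies $\|E_N\|_{L^\infty}\lesssim N^{-\gamma}$, by Weyl's inequality and \eqref{eq:40}.

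The error terms are handled by interpolation. $\ell^2$ Plancherel gives decay $2^{-\gamma k}$ (or $2^{-\gamma k}$ times $2^{l/2}$ in the $m$-sum of (iv)), trivial $\ell^{p_0}$ bounds give no decay, and interpolating yields a summable bound. The key point is that $G$ is a Gauss-type sum, hence an Ionescu--Wainger weight with $U_G=1$ by \eqref{eq:Gauss-IW}. Its size on $\mathcal R(q)$ with $q\simeq2^s$ is $\lesssim 2^{-s/\deg P+\epsilon s}$ by the standard Gauss sum bound.

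\textbf{Long jumps.} For each fixed $s$ we apply Theorem~\ref{thm:IWvar} with $N=2^s$, $\lambda_k$ comparable to $2^{k\deg P}2^{-Cs}$, and $\mathfrak m_k=\mathfrak m_{2^k}\eta_s$ restricted to $k\ge Cs$ (the finitely many small $k$ are handled trivially). The continuous ingredients ${\bf A}_{2r}$ and ${\bf B}_p$ are the classical real-variable bounds for smooth dyadic averages along $P$. These come from Littlewood--Paley theory and the continuous variational, jump and oscillation inequalities (L\'epingle / Jones--Seeger--Wright) after comparison with $\eta$. The resulting bound is
\[
\lesssim 2^{Cs/\log s}\cdot s\cdot \mathbf C_G\lesssim 2^{Cs/\log s}\,s\,2^{-s/\deg P+\epsilon s}.
\]
This is at the $\ell^{2r}$ endpoints, with no $\mathbf C_G$ loss from conditioning. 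Interpolating with the $\ell^2$ bound, which is $\lesssim 2^{-cs}$ and immediate by disjointness, gives $2^{-c_p s}$, summable in $s$ for any $p$ once $r$ is chosen large.

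\textbf{Square function (iv).} Here we use Theorem~\ref{thm:IW:upper} with $H_2=\ell^2(\N\times\N_{<2^l})$ and $H_1=\C$, again for each $s$. The continuous vector-valued bound is ${\bf A}_{2r}\lesssim 2^{-cl}$ at $p=2$, and uniformly bounded (up to an $l$-loss) at $2r$. The derivative $\Delta_m$ of $\mathfrak m_N$ gains $2^{-l}$ pointwise, and summing $2^l$ of them in $\ell^2$ gives $2^{-l/2}$. Interpolation then yields $2^{-c_pl}2^{-c_ps}$.

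I expect the main obstacle to be bookkeeping the simultaneous decay in $l$ and $s$ for (iv). The continuous short-variation estimate at $p=2r$ is only mildly lossy in $l$, and this loss must be beaten by interpolation against the $\ell^2$ decay. Compatibility of the cutoff scale with the support condition \eqref{eq:support} also requires restricting to $k\gtrsim s$.
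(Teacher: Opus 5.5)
\textbf{Overall.} Your architecture is reasonable in outline. You use transference, reduce (i)--(iii) to long jumps plus the square function (iv) as in the paper's remark (1)(b), and apply Theorem~\ref{thm:IW:upper} level by level for (iv). Your $N$-independent cutoff $\eta_s=\eta(2^{Cs}\cdot)$ is harmless wherever only \eqref{eq:support} is needed.

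\textbf{The gap: the long-jump step.} Theorem~\ref{thm:IWvar} requires each $\mathfrak m_n$ to be supported in $\varepsilon_n\mathbf Q$ with $\varepsilon_n\le\lambda_n^{-1}$ for a lacunary $(\lambda_n)$, so the supports must shrink geometrically in $n$. Your multipliers $\mathfrak m_{2^k}\eta_s$ are all supported on the fixed set $2^{-Cs}\mathbf Q$. The scale $\lambda_k^{-1}\simeq 2^{-k\deg P}2^{Cs}$ you propose reflects the decay of $\mathfrak m_{2^k}$, not its support, so \eqref{conditionC-epsilon} fails as soon as $\lambda_k>2^{Cs}$. This is not a formality. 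In the proof of Theorem~\ref{thm:IWvar}, Rademacher--Menshov only covers the first $2^{\kappa_N}\simeq N$ indices. All later indices are handled by factoring through $\mathcal Q(L_N)$ and invoking the sampling principle. For $N=2^s$ this needs the supports inside $L_{2^s}^{-1}\mathbf Q$, a scale doubly exponentially small in $s$.

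\textbf{What is missing.} You need an $N$-dependent frequency localization. The paper builds it in from the start. It splits $f$ with $\Pi_{\le l_{(N)},\le -dL_{(N)}}$ and approximates on arcs of width $2^{-dL_{(N)}}\approx N^{-d(1-\alpha)}$ via \eqref{eq:25}. It then feeds $\mathfrak m_N\eta_{\le -dL_{(N)}}$ into Theorem~\ref{thm:IWvar}; these supports do shrink lacunarily and satisfy \eqref{conditionC-epsilon} because $N\ge 2^{l/\alpha}$. Within your setup you could instead split $\mathfrak m_{2^k}\eta_s$ into two pieces. One is cut off at scale $2^{-k(\deg P-\delta')}$, which is admissible for Theorem~\ref{thm:IWvar}. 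The other is a remainder on which $|\mathfrak m_{2^k}|\lesssim 2^{-k\delta'/\deg P}$; bound it by Theorem~\ref{thm:IW:upper} and Plancherel, then sum in $k$ after interpolation.

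\textbf{Related claims that also need fixing.}
\begin{enumerate}
\item The $\ell^{2r}$ bound does not carry $\sup_{\Sigma_s}|G|$. The constant in Theorem~\ref{thm:IWvar} involves $\mathbf C_G(\mathcal R_{\le N})\ge U_G$, since $G(0)=1$, so all decay in $s$ must come from $\ell^2$.
\item The $\ell^2$ variational bound with decay $2^{-cs}$ is not immediate from disjointness. The paper obtains it from the factorization \eqref{eq:28}. The $G$-free factor is bounded in $\mathcal V^{\rho,2}$ by Theorem~\ref{thm:IWvar} with only a $2^{\gamma l}$ loss. The factor $T_{\Z}[\llbracket G\mid\eta_{\le -dp_0u}\rrbracket_{\Sigma_l}]$ is bounded by $2^{-cl}$ on $\ell^2$.
\item In (iv), your error bound $2^{l/2-\gamma k}$ gives no decay in $l$ after summing over $k$. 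You need the extra trivial gain $\min\{2^{-l},2^{-\gamma k}\}$ for each difference, as in \eqref{eq:33}--\eqref{eq:34}.
\end{enumerate}
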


Now a few comments about Theorem~\ref{bourgain} and Theorem~\ref{bourgain1} are in order.

\begin{enumerate}[label*={(\arabic*)}, itemsep=2pt]
\item There are two ways to obtain pointwise convergence for $A_{N;X, T}^{P}f$ from \eqref{eq:7}.

    \begin{itemize}[itemsep=2pt]

    \item[(a)] Each of the inequalities from \eqref{eq:4}--\eqref{eq:9} implies Theorem~\ref{bourgain}. We first obtain a $\mu$-almost everywhere limit along the set $\mathbb D_{\tau}$. Since $\tau \in (1,\infty)$ is arbitrary, using the positivity of $A_{N;X, T}^{P}$ (i.e. $A_{N;X, T}^{P}f\ge 0$ if $f\ge 0$) we obtain that the limit exists $\mu$-almost everywhere on $X$ as $N\to\infty$, see \cite[Lemma~1.5]{RW} for details.

    \item[(b)] The other approach is more flexible and applies to operators that are not necessarily positive. Namely, if we combine inequalities from \eqref{eq:4}--\eqref{eq:9} for $\tau=2$ with square function inequality \eqref{eq:10} and invoke the upper bounds \eqref{eq:5} and \eqref{eq:6}, then the inequalities from \eqref{eq:4}--\eqref{eq:9} hold with $\Z_+$ in place of $\mathbb D_{\tau}$ and so the implied constants in \eqref{eq:4}--\eqref{eq:9} can be taken independent of $\tau$. Now, if any of the inequalities from \eqref{eq:4}--\eqref{eq:9} holds with $\Z_+$ in place of $\mathbb D_{\tau}$, we immediately see that the limit of $A_{N;X, T}^{P}f$ exists $\mu$-almost everywhere on $X$ as $N\to\infty$.

    \end{itemize}

\item Each of the inequalities from \eqref{eq:4}--\eqref{eq:9} with $\tau=2$ implies that for every $p\in(1, \infty)$ there exists $C_{p,  P}\in\RR_+$ such that, for every $f\in L^p(X)$, we have
\begin{align}
\label{eq:11}
\big\|\sup_{N\in\Z_+}|A_{N;X, T}^{P}f|\big\|_{L^p(X)} \le C_{p,  P}\|f\|_{L^p(X)}.
\end{align}
Maximal inequality \eqref{eq:11}, together with the pointwise convergence of $A_{N;X, T}^{P} f$ and the dominated convergence theorem, implies convergence of $A_{N;X, T}^{P} f$ in $L^p(X)$ norm.

\item In view of the previous two items, Theorem~\ref{bourgain1} implies Theorem~\ref{bourgain}. The ranges of parameters in Theorem~\ref{bourgain1} are sharp. Inequality \eqref{eq:4} was established in \cite{MST2}, inequality \eqref{eq:8} was established in \cite{MSZ3}, whereas inequality \eqref{eq:9} was established in \cite{MSS}. Variants of the square function estimates in the spirit of \eqref{eq:10} were studied in \cite{MST2}. In fact, the authors in \cite{MSS, MST2, MSZ3} studied multidimensional variants of \eqref{eq:7} as well as its singular integral variants. Moreover, all inequalities from \eqref{eq:4}--\eqref{eq:9} were established along $\mathbb{Z}_+$ in place of $\mathbb D_{\tau}$ and so the implied constants were shown to be independent of $\tau$. Furthermore, they were also shown to be independent of the coefficients of the underlying polynomials.

\item Finally, we remark that inequality \eqref{eq:8} implies \eqref{eq:4} for all $\rho \in (2, \infty)$. This was observed by Bourgain in \cite{B3}, see also \cite{JSW, MSZ1}. From this point of view, inequality \eqref{eq:8} can be thought of as an endpoint estimate for \eqref{eq:4}. Interestingly, the bound 
\[
O^{2}_{({\mathfrak o}_j)_{j \in \NN_{\leq J}}}(A_{N;X, T}^{P}f : N\in \mathbb I)\lesssim V^{2}(A_{N;X, T}^{P}f : N\in \mathbb I)
\]
holds pointwise for any $({\mathfrak o}_j)_{j \in \NN_{\leq J}}$ and $J\in\Z_+$, and for any $\mathbb I\subseteq \Z_+$, while we have
\[
\sup_{\|f\|_{L^p(X)}\le1}\mathcal{V}_{X}^{2, p}(A_{N;X, T}^{P}f: N\in \mathbb D_{2})=\infty. 
\]
The authors in \cite{MSS} showed that there are no obvious relations between oscillation inequality \eqref{eq:9} and variation inequality \eqref{eq:4} beyond the trivial pointwise bound. The latter suggests that oscillation inequality \eqref{eq:9} should play a role of an endpoint for \eqref{eq:4} similar to jump inequality \eqref{eq:8}. Therefore, a natural question arises whether the bounds from \eqref{eq:9} can be used to deduce estimates from \eqref{eq:4} for all $\rho \in (2, \infty)$.
\end{enumerate}

We now focus on proving Theorem~\ref{bourgain1}. In fact, we only prove inequalities \eqref{eq:4} and \eqref{eq:10}. Inequalities \eqref{eq:8} and \eqref{eq:9} can be derived in the same way as inequality \eqref{eq:4}.

\subsection{Preliminary reductions}
First, we reduce the matter to the integer shift system $(\mathbb Z, \mathcal B(\mathbb Z), \mu_{\mathbb Z})$ endowed with the shift transformation $S \colon \mathbb Z\to\mathbb Z$ defined by $S(x) \coloneqq x-1$ for  $x\in\mathbb Z$. Here $\mathcal B(\mathbb Z)$ is the $\sigma$-algebra of all subsets of $\mathbb Z$ and $\mu_{\mathbb Z}$ is counting measure on $\mathbb Z$. In this case, for every $N \in \Z_+$, we shall abbreviate $A_{N;\mathbb Z, S}^{P}f$ to
\begin{align}
\label{eq:13}
A_{N;\mathbb Z}^{P}f(x) \coloneqq \frac{1}{N}\sum_{n\in [N]}f(x-P(n)), \qquad x\in \mathbb Z.
\end{align}
In view of the Calder{\'o}n transference principle \cite{Cald}, see also \cite{Kosz}, it suffices to prove Theorem~\ref{bourgain1} for the integer shift system. Then each of the inequalities \eqref{eq:4}--\eqref{eq:10} for the integer shift system immediately implies its analog for an arbitrary measure-preserving system.

We also remark that the Calder{\'o}n transference principle \cite{Cald, Kosz} only transfers quantitative bounds (such as inequalities \eqref{eq:4}--\eqref{eq:10}) that imply pointwise almost everywhere convergence, but does not transfer pointwise almost everywhere convergence from the integer shift system to an arbitrary measure-preserving system. In fact, in the integer shift system, pointwise convergence is implied by norm convergence, since the $\ell^\infty(\mathbb Z)$ norm is dominated by the $\ell^2(\mathbb Z)$ norm. This is not necessarily true for other measure-preserving systems.

The advantage of working  with the integer shift system is that the average $A_{N;\mathbb Z}^{P}f$ can be expressed as the convolution of $f$ with the kernel
\begin{align*}
K_N^P\coloneqq \frac{1}{N}\sum_{n\in[N]}\ind{\{P(n)\}}.
\end{align*}
The averages from \eqref{eq:13} are discrete averaging operators of Radon type, and their integral counterparts are well understood through classical harmonic analysis methods, see \cite{bigs} and also \cite{DF} or \cite{MSZ2}. Using the Fourier transform, we have 
\begin{align*}
\mathcal F_{\mathbb Z}(A_{N; \mathbb Z}^{P} f)(\xi)=m_{N}(\xi)\mathcal F_{\mathbb Z}f(\xi), \qquad \xi\in\mathbb T,
\end{align*}
where the multiplier $m_N$ is a normalized exponential sum of the form 
\begin{align*}
m_{N}(\xi) \coloneqq \mathcal F_{\mathbb Z}K_N^P(\xi)= \frac{1}{N}\sum_{n\in[N]}e(-\xi\cdot P(n)).
\end{align*}

Here is the place where the classical circle method enters the game and can be used to understand the nature of the multiplier \(m_{N}\) by analyzing the major and minor arcs. This will require the concepts of canonical fractions and their corresponding major arcs.

Let $\eta \colon \mathbb R\to[0, 1]$ be a smooth and even function satisfying
\begin{align*}
\ind{[-1/4, 1/4]}\le \eta\le \ind{(-1/2, 1/2)}.
\end{align*}
For any $n, \xi\in\mathbb R$, we set $\eta_{\le n}(\xi) \coloneqq \eta(2^{-n}\xi)$. For any $l\in\mathbb N$, using the definition of canonical fractions \eqref{IWeq:372}, we set $\Sigma_{\leq l} \coloneqq \mathcal R_{\le 2^l}$ and $\Sigma_l  \coloneqq  \Sigma_{\leq l} \setminus \Sigma_{\leq l-1}$. Note that $|\Sigma_{\leq l}| \le 2^{2l}$. Similarly, for any $l\in\mathbb N$ and $m\in\mathbb Z$, we introduce the dyadic \emph{major arcs} by
\begin{align*}
{\mathcal M}_{\leq l, \leq m}  \coloneqq \bigcup_{\theta\in \Sigma_{\leq l}}[\theta-2^m, \theta+2^m].
\end{align*}
We note that the sets ${\mathcal M}_{\le l, \leq m}$ are nondecreasing in both $l$ and $m$, and if $m \le - 2l - 2$, then the arcs $[\theta - 2^m, \theta + 2^m]$ that comprise ${\mathcal M}_{\leq l, \leq m}$ are pairwise disjoint. We also define
\begin{align*}
{\mathcal M}_{l, \leq m}   \coloneqq   {\mathcal M}_{\leq l, \leq m} \setminus {\mathcal M}_{\leq l-1, \leq m}
\quad \text{and} \quad
{\mathcal M}_{l,m}  \coloneqq   {\mathcal M}_{l,\leq m} \setminus {\mathcal M}_{l,\leq m-1}.
\end{align*}

Using \eqref{eq:1}, we define the dyadic Ionescu--Wainger projections $\Pi_{\leq l, \leq m} \colon \ell^2(\mathbb Z) \to \ell^2(\mathbb Z)$ by 
\begin{align*}
\Pi_{\leq l, \leq m} f \coloneqq T_{\mathbb{Z}}\big[\llbracket 1 \mid \eta_{\le m}\rrbracket_{\Sigma_{\le l}}\big] f ,\qquad f \in \ell^2(\mathbb Z).
\end{align*}
If $m \le - 2l - 2$, then the  following properties of the Ionescu--Wainger projections hold.

\begin{enumerate}[label*={(\roman*)}, itemsep=2pt]
\item The operator $\Pi_{\leq l, \leq m}$ is self-adjoint and real symmetric.
\item The function $\mathcal F_{\mathbb Z} (\Pi_{\leq l, \leq m} f)$ is supported on the set ${\mathcal M}_{\leq l, \leq m}$.
\item If $\mathcal F_{\mathbb Z} f$ vanishes on the set ${\mathcal M}_{\leq l, \leq m}$, then $\Pi_{\leq l, \leq m} f \equiv 0$.
\item If $\mathcal F_{\mathbb Z} f$ is supported on the set ${\mathcal M}_{\leq l, \leq m-2}$, then $\Pi_{\leq l, \leq m} f = f$.
\item  The operator $\Pi_{\leq l, \leq m}$ is a contraction on $\ell^2(\mathbb Z)$.
\item Moreover, for any $p\in(1, \infty)$ and $\gamma\in(0, 1)$, there exists a constant $C_p\in\mathbb R_+$ such that, for every $l\in\mathbb N$ and $m \in \mathbb Z$, if $m \leq - 6 \max\{p, p/(p-1)\}(l+1)$, then
\begin{align}
\label{eq:116}
\| \Pi_{\leq l, \leq m} f \|_{\ell^p(\mathbb Z)} \lesssim_{p,\gamma} 2^{C_p \gamma l}\|f\|_{\ell^p(\mathbb Z)}.
\end{align}
\end{enumerate}
Inequality \eqref{eq:116} is a consequence of Theorem~\ref{thm:IW:upper}.

\subsection{Minor arcs estimates}
We fix $p \in (1,\infty)$ and a polynomial $P\in\mathbb Z[{\rm n}]$ with $\deg P=d\ge 2$ . Then we choose $p_0\in 2\mathbb Z_+$ such that $p_0/(p_0-1)<p<p_0$. One may think that $p_0$ is a sufficiently large even integer, which serves as the parameter $p$ from inequality \eqref{eq:116} and the parameter $2r$ from Theorem~\ref{thm:IW:upper} and Theorem~\ref{thm:IWvar}. It ensures that inequality \eqref{eq:116} as well as Theorem~\ref{thm:IW:upper} and Theorem~\ref{thm:IWvar} can be applied. If necessary, $p_0\in 2\mathbb Z_+$ may be further adjusted to the interpolation arguments that will be used throughout this section.

We fix a small parameter 
	\begin{align*}
	0<\alpha<(10^6 dp_0)^{-1}
	\end{align*}
and a large integer $C_0\in \Z_+$ depending on $p_0$ and $P$. For all $N\in \Z_+$, we define the quantities 
	\begin{align*}
	l_{(N)} \coloneqq {\rm Log\,} N^\alpha \quad \text{and} \quad L_{(N)} \coloneqq {\rm Log\,} N-l_{(N)},
	\end{align*}
using the log-scale notation ${\rm Log\,} N \coloneqq \lfloor \log_2 N\rfloor$. For each $N \geq C_0$, we now decompose $A_{N; \mathbb{Z}}^P f$ into minor-arc and major-arc pieces, respectively, as follows
\[
A_{N; \mathbb Z}^Pf=A_{N; \mathbb Z}^P (f -\Pi_{\leq l_{(N)}, \leq -d L_{(N)}} f)
+A_{N; \mathbb Z}^P(\Pi_{\leq l_{(N)}, \leq -d L_{(N)}}f).
\]

By inequality \eqref{eq:116}, for any $\gamma\in(0, 1)$, we have
\begin{align}
\label{eq:18}
\big\|A_{N; \mathbb Z}^P(f -\Pi_{\leq l_{(N)}, \leq -d L_{(N)}} f) \big\|_{\ell^{p_0}(\mathbb Z)} 
\lesssim_{\gamma} N^{\gamma} \|f\|_{\ell^{p_0}(\mathbb Z)}.
\end{align}
This inequality can be improved by using  Weyl's inequality for the multiplier $m_N$.

\begin{proposition}
For every $C\in\mathbb R_+$, there exists a small constant $c\in(0, 1)$ such that, for all $N\in\Z_+$ and $\delta\in (0, 1]$, if $\xi \in \TT \setminus \bigcup_{\theta\in \mathcal R_{\le \delta^{-C}}}[\theta-N^{-d}\delta^{-C}, \theta+N^{-d}\delta^{-C}]$, then
\begin{align}
\label{eq:19}
|m_{N}(\xi)|\le c^{-1}(\delta^c+N^{-c}).
\end{align}
\end{proposition}
Inequality \eqref{eq:19} is the classical Weyl sum estimate for normalized exponential sums, which can be deduced from its inverse form, see for instance \cite[Exercise~1.1.21, p.~16]{Tho}.

Since $\mathcal F_{\Z}(f -\Pi_{\leq l_{(N)}, \leq -d L_{(N)}} f)$ vanishes on the set ${\mathcal M}_{\leq l_{(N)}, \leq -d L_{(N)}}$, we can use Plancherel's theorem and inequality \eqref{eq:19} to conclude that there exists a constant $c_2\in(0, 1)$ such that
\begin{align}
\label{eq:20}
\big\|A_{N; \mathbb Z}^P(f -\Pi_{\leq l_{(N)}, \leq -d L_{(N)}} f) \big\|_{\ell^{2}(\mathbb Z)} 
\lesssim N^{-c_2} \|f\|_{\ell^{2}(\mathbb Z)}.
\end{align}

By interpolating \eqref{eq:18} with \eqref{eq:20} and taking $\gamma\in(0, 1)$ sufficiently small, we deduce that there exists a small absolute constant $c_p\in(0, 1)$ such that, for all $f\in \ell^p(\mathbb Z)$, we have 
\begin{align}
\label{eq:21}
\big\|A_{N; \mathbb Z}^P(f -\Pi_{\leq l_{(N)}, \leq -d L_{(N)}} f) \big\|_{\ell^p(\mathbb Z)} 
\lesssim N^{-c_p} \|f\|_{\ell^p(\mathbb Z)}.
\end{align}
Let $\mathbb D_{\tau,*} \coloneqq \{ N \in \mathbb D_{\tau} : N \geq C_0 \}$. Using \eqref{eq:21}, we obtain
\begin{align}
\label{eq:22}
\mathcal{V}_{\Z}^{\rho, p}\big(A_{N; \mathbb Z}^P(f -\Pi_{\leq l_{(N)}, \leq -d L_{(N)}} f) : N\in \mathbb D_{\tau,*}\big) \lesssim \sum_{N\in \mathbb D_{\tau,*}}N^{-c_p} \|f\|_{\ell^p(\mathbb Z)}\lesssim_{\tau}\|f\|_{\ell^p(\mathbb Z)},
\end{align}
since $\sum_{N\in \mathbb D_{\tau,*}}N^{-c_p}\lesssim_{\tau} 1$. Now it remains to understand the major-arc piece.
\subsection{Major arcs estimates}
By the splitting property and \eqref{eq:22}, it remains to show 
\begin{align}
\label{eq:23}
\mathcal{V}_{\Z}^{\rho, p} \big(A_{N; \mathbb Z}^P(\Pi_{l, \leq -d L_{(N)}}f)
:N\in\mathbb D_{\tau, l} \big)
\lesssim 2^{-c_pl}\|f\|_{\ell^p(\mathbb Z)}
\end{align}
for all $l\in\mathbb N$ with
$\mathbb D_{\tau, l} \coloneqq \{N\in\mathbb D_{\tau}:N\ge C_0 \text{ and
} N\ge 2^{l/\alpha}\}$. Indeed, since
\begin{align*}
A_{N; \mathbb Z}^P(\Pi_{\leq l_{(N)}, \leq -d L_{(N)}}f)=\sum_{l \in \N_{\le l_{(N)}}}
A_{N; \mathbb Z}^P(\Pi_{l, \leq -dL_{(N)}}f),
\end{align*}
summing \eqref{eq:23} over $l\in\mathbb N$, and using the splitting property and \eqref{eq:22}, we obtain \eqref{eq:4}.

For each $N\ge 1$, we define a continuous counterpart of $m_N$ by
\begin{align*}
\mathfrak{m}_N(\xi) \coloneqq \int_{0}^1e(-\xi\cdot P(Nt)) \,{\rm d}t, \qquad \xi\in\mathbb R.
\end{align*}
For every $(a,q) \in\mathbb Z \times \mathbb Z_+$ such that ${\rm gcd}(a, q)=1$, we define the complete exponential sum $G(a/q) \coloneqq m_q(a/q)$. A simple application of the mean value theorem (see \cite[Lemma~7.20]{KMPWW}) shows that, for every $\xi\in\mathbb T$ and $\theta\in \Sigma_{l}$ such that $|\xi-\theta|\le M^{-1}$ for some $M \geq 1$, one has
\begin{align}
\label{eq:25}
|m_N(\xi)-G(\theta)\mathfrak m_N(\xi-\theta)|\lesssim_P 2^{l} (M^{-1}N^{d-1}+N^{-1} ).
\end{align}

By inequality \eqref{eq:25} with $M=2^{dL_{(N)}}$, there exists a constant $c_p\in(0, 1)$ such that
\[
A_{N; \mathbb Z}^P(\Pi_{l, \leq -dL_{(N)}}f)=T_{\mathbb Z}\big[\llbracket G \mid \mathfrak m_N\eta_{\le -d L_{(N)}}\rrbracket_{\Sigma_{l}}\big]f+E_Nf,
\]
where $\|E_Nf\|_{\ell^p(\mathbb Z)} \lesssim N^{-c_p}\|f\|_{\ell^p(\mathbb Z)}$. Arguing as in \eqref{eq:22}, and using the last inequality for $N\in\mathbb D_{\tau, l}$, we reduce the verification of inequality \eqref{eq:23} to proving that
\begin{align}
\label{eq:26}
\mathcal{V}_{\Z}^{\rho, p}\big(T_{\mathbb Z}\big[\llbracket G \mid \mathfrak m_N\eta_{\le -d L_{(N)}}\rrbracket_{\Sigma_{l}}\big]f:N\in\mathbb D_{\tau, l}\big)
\lesssim 2^{-c_pl}\|f\|_{\ell^p(\mathbb Z)}.
\end{align}
By Theorem~\ref{thm:IWvar}, for every $\gamma\in(0, 1)$, we have
\begin{align}
\label{eq:27}
\mathcal{V}_{\Z}^{\rho, p_0}\big(T_{\mathbb Z}\big[\llbracket G \mid \mathfrak m_N\eta_{\le -d L_{(N)}}\rrbracket_{\Sigma_{l}}\big]f:N\in\mathbb D_{\tau, l}\big)
\lesssim_{\gamma} 2^{\gamma l}\|f\|_{\ell^{p_0}(\mathbb Z)},
\end{align}
since, for every $q\in(1, \infty)$, we have 
\begin{align*}
{\bf A}_{q} & \coloneqq \sup_{\omega \in \{-1,1\}^\mathbb{N}}
\Big\|\sum_{n\in\mathbb{N}} \omega(n)\,
T_{\mathbb{R}^d}[\mathfrak m_{\lfloor \tau^{n+1}\rfloor}\eta_{\le -d L_{(\lfloor \tau^{n+1}\rfloor)}}-\mathfrak m_{\lfloor \tau^{n}\rfloor}\eta_{\le -d L_{(\lfloor \tau^{n}\rfloor)}}]\Big\|_{L^{q}(\mathbb{R})\to L^{q}(\mathbb{R})}<\infty, \\
{\bf B}_{q} & \coloneqq \sup_{\|f\|_{L^q(\R)}\le1}\mathcal{V}_{\R}^{\rho, q}\big(T_{\mathbb R}\big[\mathfrak m_{\lfloor \tau^{n}\rfloor}\eta_{\le -d L_{(\lfloor \tau^{n}\rfloor)}}\big]f:n\in\N\big)<\infty.
\end{align*}
The bounds for ${\bf A}_{q}$ and  ${\bf B}_{q}$ follow respectively from \cite{DF} and  \cite{JSW}. We also refer to \cite{MSZ2} for a unified approach, which also allows us to handle jump and oscillation seminorms.

Inequality \eqref{eq:27} can be improved by controlling the  complete exponential sum $G$ on $\Sigma_{l}$. Indeed, define $u\coloneqq 100(l+1)$ and note that $N\ge \max\{2^{l/\alpha}, C_0\}$ for any $N\in\mathbb D_{\tau, l}$. This implies $N\ge 2^{10^5  dp_0l}C_0^{1/2}\ge 2^{100dp_0u}$, provided that $C_0\ge 2^{10^6  dp_0}$. Thus, we may write
\begin{align}
\label{eq:28}
T_{\mathbb Z}\big[\llbracket G \mid \mathfrak m_N\eta_{\le -d L_{(N)}}\rrbracket_{\Sigma_{l}}\big]f
=
T_{\mathbb Z}\big[\llbracket 1 \mid \mathfrak m_N\eta_{\le -d L_{(N)}}\rrbracket_{\Sigma_{\le l}}\big]
\, T_{\mathbb Z}\big[\llbracket G \mid \eta_{\le -d up_0}\rrbracket_{\Sigma_{l}}\big]f.
\end{align}
By Plancherel's theorem and the Weyl sum estimate for complete exponential sums, we immediately deduce that there exists a constant $c\in(0, 1)$ such that
\begin{align}
\label{eq:29}
\big\|T_{\mathbb Z}\big[\llbracket G \mid \eta_{\le -dp_0u }\rrbracket_{\Sigma_{l}}\big]f\big\|_{\ell^2(\Z)}\lesssim 2^{-cl}\|f\|_{\ell^2(\Z)}.
\end{align}
Combining \eqref{eq:27} for $p_0=2$ and $\gamma=c/2$ with \eqref{eq:29} and the identity \eqref{eq:28}, we obtain 
\begin{align}
\label{eq:30}
\mathcal{V}_{\Z}^{\rho, 2}\big(T_{\mathbb Z}\big[\llbracket G \mid \mathfrak m_N\eta_{\le -d L_{(N)}}\rrbracket_{\Sigma_{l}}\big]f:N\in\mathbb D_{\tau, l}\big)
\lesssim 2^{-c_2 l}\|f\|_{\ell^{2}(\mathbb Z)}
\end{align}
with $c_2 = c/2$. Interpolating \eqref{eq:27} with \eqref{eq:30}, and taking $\gamma\in(0, 1)$ to be sufficiently small, we obtain inequality \eqref{eq:26} for some $c_p\in(0,  1)$. This  completes the proof of inequality \eqref{eq:4}.

\subsection{Concluding remarks} As we remarked above, the proofs of inequalities \eqref{eq:8} and \eqref{eq:9} proceed with obvious changes, in much the same way as the proof of inequality \eqref{eq:4}. We omit the details. We now briefly outline the proof of the square function estimate from \eqref{eq:10}, which in the integer shift settings reads as follows
\begin{align}
\label{eq:31}
\bigg\|\Big(\sum_{k\in\N}\sum_{m\in\N_{<2^l}}|\Delta_m A_{2^k+2^{k-l}m;\Z}^{P}f|^2\Big)^{1/2}\bigg\|_{\ell^p(\Z)}\le C_{p, P}2^{-c_pl}\|f\|_{\ell^p(\Z)},
\end{align}
where $\Delta_m A_{2^k+2^{k-l}m;\Z}^{P}\coloneqq A_{2^k+2^{k-l}(m+1);\Z}^{P}-A_{2^k+2^{k-l}m;\Z}^{P}$ for $m\in\N$.

Since the left-hand side of \eqref{eq:31} is defined by a square function, we can entirely rely on Theorem~\ref{thm:IW:upper}. However, the challenge is to gain decay in $l \in \mathbb{N}$. For this purpose, the key observation explaining how the decay in  $l \in \mathbb{N}$ arises is that, for all \(p \in [1, \infty)\), we have
\begin{align}
\label{eq:16}
\big\|A_{2^k+2^{k-l}(m+1);\Z}^{P}f-A_{2^k+2^{k-l}m;\Z}^{P}f\big\|_{\ell^p(\Z)}\lesssim 2^{-l}\|f\|_{\ell^p(\Z)},
\end{align}
since $2^k+2^{k-l}m\simeq 2^k$ for all $m\in \N_{<2^l}$. To prove \eqref{eq:31}, we will again analyze the minor and major arcs separately. On minor arcs, for every $k\in\N$ and $\gamma\in(0, 1)$, we have
\begin{align}
\label{eq:32}
\bigg\|\Big(\sum_{m\in\N_{<2^l}}\big|\Delta_m A_{2^k+2^{k-l}m;\Z}^{P} (f-\Pi_{\leq l_{(2^k)}, \leq -d L_{(2^k)}}f ) \big|^2\Big)^{1/2}\bigg\|_{\ell^{p_0}(\Z)}\lesssim_\gamma 2^{\gamma k}\|f\|_{\ell^{p_0}(\Z)},
\end{align}
which follows from \eqref{eq:16} and inequality \eqref{eq:116}. Combining \eqref{eq:16} with \eqref{eq:20}, we obtain
\begin{align}
\label{eq:33}
\big\|\Delta_m A_{2^k+2^{k-l}m;\Z}^{P}(f-\Pi_{\leq l_{(2^k)}, \leq -d L_{(2^k)}}f ) \big\|_{\ell^{2}(\Z)}
\lesssim \min\{2^{-l}, 2^{-c_2k}\}\|f\|_{\ell^{2}(\Z)}.
\end{align}
Using \eqref{eq:33} and bounding the minimum by $2^{-3l/4 - c_2k/4}$, we see that
\begin{align}
\label{eq:34}
\bigg\|\Big(\sum_{m\in\N_{<2^l}}\big|\Delta_m A_{2^k+2^{k-l}m;\Z}^{P}(f-\Pi_{\leq l_{(2^k)}, \leq -d L_{(2^k)}}f )\big|^2\Big)^{1/2}\bigg\|_{\ell^{2}(\Z)}\lesssim 2^{-l/4- c_2k/4}\|f\|_{\ell^{2}(\Z)}.
\end{align}
Interpolating \eqref{eq:34} with \eqref{eq:32} and taking $\gamma\in(0, 1)$ sufficiently small, and summing over $k\in\NN$, we obtain \eqref{eq:31} with $f-\Pi_{\leq l_{(2^k)}, \leq -d L_{(2^k)}}f$ in place of $f$. Now the proof of inequality \eqref{eq:31} is reduced to showing \eqref{eq:31} with $\Pi_{\leq l_{(2^k)}, \leq -d L_{(2^k)}}f$ in place of $f$. For this purpose, we can mimic the argument from the major-arc estimates, as in the proof of inequality \eqref{eq:4}. After major arcs estimates based on \eqref{eq:25} with $M=2^{dL_{(2^k)}}$, followed by an application of Theorem~\ref{thm:IW:upper}, we are reduced to showing that there exists a constant $c_p\in(0, 1)$ such that
\begin{align}
\label{eq:35}
\bigg\|\Big(\sum_{k\in\N}\sum_{m\in\N_{<2^l}}|T_{\R}\big[(\Delta_m \mathfrak m_{2^k+2^{k-l}m;\Z})\eta_{\le -d L_{(2^k)}}\big]f|^2\Big)^{1/2}\bigg\|_{L^p(\R)}\lesssim 2^{-c_pl}\|f\|_{L^p(\R)}.
\end{align}
Inequality \eqref{eq:35} follows from square function considerations combined with the estimate $\|T_{\R}[(\Delta_m \mathfrak m_{2^k+2^{k-l}m;\Z})]f\|_{L^p(\R)}\lesssim 2^{-l}\|f\|_{L^p(\R)}$, valid for all $p\in[1, \infty)$, see \cite{MSZ2} for details. 

This completes the proof of inequality \eqref{eq:31} and  the proof of inequality \eqref{eq:10} follows.

\end{document}